\newtheorem{theorem}{Theorem}[section]
\newtheorem{lemma}[theorem]{Lemma}
\newtheorem{proposition}[theorem]{Proposition}
\newtheorem{corollary}[theorem]{Corollary}
\newtheorem{definition}{Definition}
\newtheorem{remark}{Remark}
\numberwithin{equation}{section}
\numberwithin{figure}{section}
\numberwithin{table}{section}
\title{Optimal convergence of adaptive FEM for eigenvalue clusters in mixed form}
\author[1]{D.\ Boffi\thanks{daniele.boffi@unipv.it}}
\author[2]{D.\ Gallistl\thanks{gallistl@ins.uni-bonn.de}}
\author[1]{F.\ Gardini\thanks{francesca.gardini@unipv.it}}
\author[3]{L.\ Gastaldi\thanks{lucia.gastaldi@unibs.it}}
\affil[1]{Dipartimento di Matematica ``F. Casorati'', University of Pavia,
Italy}
\affil[2]{Institut f\"ur Numerische Simulation, University of Bonn, Germany}
\affil[3]{DICATAM, University of Brescia, Italy}
\definecolor{purple}{rgb}{0.5, 0.0, 0.5}
\definecolor{violet}{rgb}{0.56, 0.0, 1.0}
\definecolor{darkspringgreen}{rgb}{0.09, 0.45, 0.27}
\definecolor{grey}{rgb}{0.6, 0.6, 0.6}
\newcommand{\A}{\mathsf{A}}
\newcommand{\B}{\mathsf{B}}
\newcommand{\N}{\mathsf{N}}
\renewcommand{\H}{\mathcal{H}}
\newcommand{\eigl}{E(\lambda)}
\newcommand{\fortin}{\Pi^F_h}
\newcommand{\fortinH}{\Pi^F_H}
\newcommand{\kernel}{\mathbb{K}_h}
\newcommand{\RE}{\mathbb{R}}
\newcommand{\ssigma}{\boldsymbol{\sigma}}
\newcommand{\ttau}{\boldsymbol{\tau}}
\newcommand{\x}{\mathbf{x}}
\newcommand{\ddiv}{\operatorname{div}}
\newcommand{\card}{\operatorname{card}}
\newcommand{\curl}{\operatorname{curl}}
\newcommand{\ccurl}{\operatorname{\mathbf{curl}}}
\newcommand{\T}{\mathcal{T}}
\newcommand{\HH}{\H}
\newcommand{\G}{\textbf{\textsf{G}}}
\newcommand{\Tlh}{T^\lambda_h}
\newcommand{\Tll}{T^{\lambda_j}_\ell}
\newcommand{\Thl}{T_h^\lambda}
\newcommand{\Crel}{C_{\operatorname{rel}}}
\newcommand{\Cdrel}{C_{\operatorname{drel}}}
\newcommand{\Cqo}{C_{\operatorname{qo}}}
\newenvironment{sistemato?}
{\bgroup\color{darkspringgreen}\textbf{Tentativo di dimostrazione.}}
{\egroup}
\begin{document}

\maketitle

\begin{abstract}
It is shown that the $h$-adaptive mixed finite element method
for the discretization of eigenvalue clusters of the Laplace operator  
produces optimal convergence rates in terms of nonlinear approximation
classes.
The results are valid for the
typical mixed spaces of Raviart--Thomas or Brezzi--Douglas--Marini type
with arbitrary fixed polynomial degree in two and three space dimensions.

\end{abstract}

\section{Introduction}

The study of optimal convergence rates for adaptive finite element schemes has
been carried on by several researchers during the last decades in the case of
source problems 
(see, e.g.,~\cite{Do,Stevenson2007,CasconKreuzerNochettoSiebert2008,BeckerMaoShi2010,CarstensenRabus2011,HuangXu2012})
and more recently has been applied to
eigenvalue problems as well 
(see, e.g.,
\cite{GarauMorinZuppa2009,GianiGraham2009,CarstensenGedicke2011}
for convergence
and \cite{DaiXuZhou2008,CarstensenGedicke2012,DaiHeZhou2014,CarstensenGallistlSchedensack2015}
for optimal rates).
Some survey papers are
available; we refer, in particular, for further reading and references,
to~\cite{NochettoSiebertVeeser2009,NochettoVeeser,CarstensenFeischlPagePraetorius2014}.
In the case of eigenvalue approximation, it has been recently observed that
adaptive schemes driven by the error indicator associated to an individual
eigenvalue may produce unsatisfactory results, and that eigenvalues belonging
to clusters have to be considered simultaneously (see, in
particular,~\cite{Gallistl2014thesis,Gallistl2014nonconf,Gallistl2014conform}).

In this paper, we study the adaptive approximation of the Laplace
eigenvalue problem by mixed finite elements.
The analysis of the underlying formulation, which fits the framework of
$(0,g)$-type mixed problems, is not a mere
generalization of the case of standard conforming Galerkin approach
(see~\cite{Boffi2010}, where the convergence and
the a~priori estimates are recalled).
This causes additional technical difficulties which were in previous
works~\cite{DuranGastaldiPadra1999} circumvented by showing equivalence
with some nonconforming but elliptic finite element formulation.
Typically, residual-based a~posteriori error estimates are derived by
exploiting the fact that the error of the eigenvalues as well as
the error of the eigenfunctions in some weaker norm (usually the $L^2$
norm) is of higher-order compared with the error in the energy-like
norm.
The higher-order $L^2$ convergence, however, is not valid in its
original format in mixed FEMs, and one technical tool we make use
of is a fairly abstract superconvergence result for eigenvalue problems
where a certain error quantity is shown to be of higher order in
the $L^2$ norm.
For the low-order case a similar result was shown
in~\cite{Gardini2009} by using the representation in terms of
nonconforming finite elements from~\cite{DuranGastaldiPadra1999}.

We follow the argument
of~\cite{CasconKreuzerNochettoSiebert2008} in order to show the optimality of
an adaptive finite element scheme which is constructed taking into account
clusters of eigenvalues in the spirit of~\cite{Gallistl2014thesis}. In order
to obtain the result, we need to derive estimates which are essentially
different from the case of standard FEMs: this is one of the main
contributions of our paper.

Previous a~posteriori estimates for mixed formulation (source or eigenvalues
problem) mostly showed efficiency and reliability with respect to the vector
variable only (see~\cite{Alonso1996} and~\cite{ChenHolstXu2009,HuangXu2012};
other results in this context can be found
in~\cite{BraessVerfuerth1996,WohlmuthHoppe1999,GaticaMaischak2005,LovadinaStenberg2006,LarsonMalqvist2008}).
Estimates involving the scalar variable were present
in~\cite{DuranGastaldiPadra1999} (where, as already mentioned, the
equivalence with nonconforming schemes is exploited) and
in~\cite{Carstensen1997} (where the source problem is considered). Another
main contribution of our analysis is that we show optimality also with respect
to the scalar variable (see Definitions~\ref{def:distance} and~\ref{def:gap}).
This is performed by a suitable definition of the error
indicator (see Definitions~\ref{def:indicator} and~\ref{def:indicator3D});
this allows to prove the optimal convergence rate not only for the
eigenfunction but for the eigenvalues as well (see Section~\ref{se:eig}).

The outline of the paper is as follows: Section~\ref{se:setting} introduces
the problem we are dealing with, Section~\ref{se:algorithm} describes the
error indicators and our adaptive algorithm, Section~\ref{se:theorem} states
the main theorem of our paper, concerning the convergence of the adaptive
scheme in terms of a \emph{theoretical} error indicator which is equivalent to
the error indicator used for the design of the AFEM algorithm.
Section~\ref{se:eig} shows that the convergence of the error indicator, which
is related to the convergence of the eigenfunctions, actually implies the
convergence of the eigenvalues as well.
Finally, Section~\ref{se:lemmas} contains all technical results which are used
in the proof of our main theorem and Section~\ref{se:3D} discusses the
extension to three space dimensions.

Throughout this paper, we use standard notation for Lebesgue and
Sobolev spaces and their norms. The $L^2$ norm of a function $v$
over some domain $\omega$ is denoted by $\|v\|_\omega$
and, if there is no risk of confusion, we write $\|v\|=\|v\|_\Omega$
for the physical domain $\Omega$. The scalar product of $L^2(\Omega)$ is
denoted by $(\cdot,\cdot)$. If $\mathcal{A}$ is a disjoint union of
subdomains of $\Omega$, typically a (subset of a) triangulation, then
$\|v\|_\mathcal{A}^2=\sum_{\omega\in\mathcal{A}}\|v\|_\omega^2$.
We denote the scalar curl of some two-dimensional vector field $\psi$
by $\curl\psi=\partial_2\psi_1 -\partial_1\psi_2$
and the vector curl of a scalar-valued function $v$ by
$\ccurl v = (-\partial_2 v,\partial_1 v)^T$.
In three dimensions we define as usual $\ccurl\psi=\nabla\times\psi$.

The notation $A\lesssim B$ refers to an inequality
$A\leq C B$ up to a constant $C$ that is independent of the mesh size.
We do not trace the explicit dependence of the constants on the eigenvalues,
cf.\ Remark~\ref{r:constants}.

The mesh-size is typically denoted by $h$; when a triangulation $\T_h$ is
obtained as a refinement of a given mesh, we denote by $\T_H$ the coarser
mesh. When dealing with the adaptive scheme, we denote by $\ell$ the level of
refinement, so that $\T_{\ell+1}$ is the next triangulation in the algorithm
obtained from $\T_\ell$.

\section{Setting of the problem}
\label{se:setting}

Our main result is valid both in two and three dimensions. From now on, we
discuss the two dimensional setting. Section~\ref{se:3D} extends the result in
three dimensions.

Given a polygonal domain $\Omega$, in this paper we are interested in the
following eigenvalue problem associated with the Laplace operator in mixed
form: find $\lambda\in\RE$ and $u\in L^2(\Omega)$ with $\|u\|=1$
such that for some $\ssigma\in H(\ddiv;\Omega)$ it holds
\[
\left\{
\aligned
&\int_\Omega\ssigma\cdot\ttau\,d\x+\int_\Omega u\ddiv\ttau\,d\x=0&&
\forall\ttau\in H(\ddiv;\Omega)\\
&\int_\Omega v\ddiv\ssigma\,d\x=-\lambda\int_\Omega uv\,d\x&&
\forall v\in L^2(\Omega).
\endaligned
\right.
\]

\subsection{Abstract mixed eigenvalue problem}

We cast this problem within the standard setting of abstract eigenvalue
problems in mixed form of the second type (see~\cite{bbg2,Boffi2010}).

Let $\Sigma$, $M$, $\H$ be Hilbert spaces such that $M\subseteq\H\subseteq
M^\star$ and consider two bilinear and continuous forms
$a:\Sigma\times\Sigma\to\mathbb R$ symmetric, and
$b:\Sigma\times M\to\mathbb R$ which satisfy the usual hypotheses for mixed
problems~\cite{BoffiBrezziFortin2013}: $a$ is elliptic in the kernel of $b$
and $b$ fulfills the inf-sup condition. Moreover, the form $a$ is supposed to
be positive definite so that the associated norm $|\cdot|_a$ is well defined.
In the pivot space $\H$ we consider the scalar product $(\cdot,\cdot)_\H$ and
corresponding norm $\|\cdot\|_\H$.

In this framework, the continuous eigenvalue problem reads: find
$\lambda\in\RE$ and $u\in M$ with $\|u\|_\H=1$ such that for some
$\sigma\in\Sigma$ it holds
\begin{equation}
\left\{
\aligned
&a(\sigma,\tau)+b(\tau,u)=0&&\forall\tau\in\Sigma\\
&b(\sigma,v)=-\lambda(u,v)_\H&&\forall v\in M
\endaligned
\right.
\label{e:exactEVP}
\end{equation}
and, given finite dimensional subspaces $\Sigma_h\subset\Sigma$ and
$M_h\subset M$
(typically associated to a finite element mesh $\T_h$),
its discrete counterpart is: find
$\lambda_h\in\RE$ and $u_h\in M_h$ with $\|u_h\|_{\H}=1$ such that for some 
$\sigma_h\in\Sigma_h$ it holds
\begin{equation}
\left\{
\aligned
&a(\sigma_h,\tau)+b(\tau,u_h)=0&&\forall\tau\in\Sigma_h\\
&b(\sigma_h,v)=-\lambda_h(u_h,v)_\H&&\forall v\in M_h.
\endaligned
\right.
\label{e:discreteEVP}
\end{equation}

The following three assumptions ensure the good approximation of the
eigenmodes (see \cite{bbg2,Boffi2010}), where $\rho(h)$ tends to zero as $h$
goes to zero and $\Sigma_0$ and $M_0$ are the subspaces of $\Sigma$ and $M$,
respectively, containing all solutions to the source problem associated
with~\eqref{e:exactEVP} when the datum is in $\H$; the discrete kernel
associated to the bilinear form $b$ is as usual defined as
\[
\kernel=\{\tau\in\Sigma_h:b(\tau,v)=0\ \forall v\in M_h\}.
\]

\begin{description}

\item[Fortid condition.] There exists a Fortin operator
$\fortin:\Sigma_0\to\Sigma_h$ such that
\[
b(\sigma-\fortin\sigma,v)=0\quad\forall v\in M_h
\]
and
\[
|\sigma-\Pi_{F,h}\sigma|_a \leq \rho(h) \|\sigma\|_{\Sigma_0}
\quad\forall \sigma\in\Sigma_0.
\]

\item[Weak approximability of $M_0$.]
\[
b(\tau_h,v)\le\rho(h)|\tau_h|_a\|v\|_{M_0}\quad
\forall v\in M_0\ \forall\tau_h\in\kernel.
\]

\item[Strong approximability of $M_0$.]
\[
\inf_{v_h\in M_h} \|v-v_h\|_\H  \le\rho(h) \|v\|_{M_0}
\quad\forall v\in M_0.
\]

\end{description}

We consider a problem associated with a compact operator, so that the
eigenvalues are enumerated as
\begin{equation*}
 0<\lambda_1\leq\lambda_2\leq\lambda_3\leq\dots
\end{equation*}
(we repeat the eigenvalues according to their multiplicities); the
corresponding eigenfunctions are denoted by
$\{(\sigma_1,u_1),(\sigma_2,u_2),\dots\}$ and the $\{u_i\}$'s form an
orthonormal system in $\H$. In particular, we have $|\sigma_i|^2_a=\lambda_i$
and $\|u_i\|_\H=1$ for $i=1,2,\dots$.
We denote by $\eigl$ the span of the $\{u_i\}$'s corresponding to $\lambda$.

Analogously, the discrete eigenvalues can be enumerated as follows
\begin{equation*}
 0<\lambda_{h,1}\leq\lambda_{h,2}\leq\dots\le\lambda_{h,N(h)}
\end{equation*}
with corresponding eigenfunctions
$\{(\sigma_{h,1},u_{h,1}),\dots,(\sigma_{h,N(h)},u_{h,N(h)})\}$,
where $N(h)=\dim(M_h)$ and the $\{u_{h,i}\}$'s form an orthonormal system in
$\H$. Here we have $|\sigma_{h,i}|^2_a=\lambda_{h,i}$ and $\|u_{h,i}\|_\H=1$ for
$i=1,2,\dots,N(h)$.

For a cluster of eigenvalues $\lambda_{n+1},\dots,$ $\lambda_{n+\N}$ of length
$\N\in\mathbb{N}$, we define the index set $J=\{n+1,\dots,n+\N\}$ and the
spaces
\begin{equation*}
W = \operatorname{span}\{u_j\mid j\in J\} \quad\text{and}\quad
W_{\T_h}=W_h = \operatorname{span}\{u_{h,j}\mid j\in J\}.
\end{equation*}

\subsection{Some useful operators}

\begin{definition}
For any $w\in M$ we define $\G(w)\in\Sigma$ as the solution to
\begin{equation}
a(\G(w),\tau) + b(\tau, w) = 0
\quad\text{for all } \tau\in\Sigma.
\label{eq:G}
\end{equation}
For any $w_h\in M_h$ we define its discrete counterpart
$\G_h(w_h)\in\Sigma_h$ via
\begin{equation}
a(\G_h(w_h),\tau_h) + b(\tau_h, w_h) = 0
\quad\text{for all } \tau_h\in\Sigma_h.
\label{eq:Gh}
\end{equation}
We explicitly notice that when two meshes $\T_h$ and $\T_H$ are present, it
is important to distinguish between $\G_h$ and $\G_H$.

\end{definition}

In many applications and corresponding instances of $a$ and $b$,
the above definition is related to an integration by parts formula where 
$\G(w)$ is some derivative of $w$. For instance, in the case of mixed
Laplacian, $\G(w)$ is the gradient of $w$.

\begin{definition}

The solution operators $T:\H\to M$ and $A:\H\to\Sigma$ are defined by
\begin{equation}
\left\{
\aligned
&   a(Ag,\tau) + b(\tau,T g)  = 0 &&\forall \tau\in\Sigma\\
&   b(Ag,v) = - (g,v)_\H &&\forall v\in M
\endaligned
\right.
\label{eq:defTA}
\end{equation}
and $T_h:\H\to M_h$ and $A_h:\H\to\Sigma_h$ are their discrete counterparts
\begin{equation}
\left\{
\aligned
&   a(A_hg,\tau_h) + b(\tau_h,T_h g) = 0 &&\forall\tau_h\in\Sigma_h\\
&   b(A_hg,v_h) = - (g,v_h)_\H &&\forall v_h\in M_h.
\endaligned
\right.
\label{eq:defTAh}
\end{equation}

\end{definition}

\begin{definition}

The operator $\Tlh:\H\to M_h$ ($\lambda\in\RE$) is defined by
\begin{equation}\label{e:discreteSource}
\left\{
\aligned
&   a(\G_h(\Tlh g), \tau_h) + b(\tau_h, \Tlh g)  = 0 &&\forall\tau_h\in\Sigma_h\\
&   b(\G_h(\Tlh g), v_h ) =  - (\lambda g, v_h)_\H &&\forall v_h\in M_h,
\endaligned
\right.
\end{equation}
that is, $\Tlh=\lambda T_h$.

\end{definition}

Let $P^W_h$ denote the $\H$-orthogonal projection onto $W_h$. The
following definition is crucial for the definition of our theoretical error
indicator.

\begin{definition}
\label{def:Lambda}
The operator $\Lambda_h:\eigl\to W_h$ is defined as follows:
\[
\Lambda_h = P^W_h \circ \Tlh .
\]
\end{definition}

For the sake of simplicity, we do not include the dependence from $\lambda$
in the notation for $\Lambda_h$: it will be clear from the context that when
$\Lambda_h$ is applied to an element of $\eigl$, the corresponding value of
$\lambda$ should be used for its definition.

\begin{lemma}\label{l:commuting}
The operators $P^W_h$ and $\Tlh$ commute, that is
$\Lambda_h = P^W_h \circ \Tlh =  \Tlh \circ P^W_h$.
In other words, if $(\sigma,u)$ is an eigenfunction associated with $\lambda$,
then $\Lambda_h u$ solves

\begin{equation*}
\left\{
\aligned
&   a(\G_h(\Lambda_h u), \tau_h) + b(\tau_h, \Lambda_h u) = 0
&&\forall\tau_h\in\Sigma_h\\
&   b(\G_h(\Lambda_h u), v_h ) = - (\lambda P^W_h u, v_h)_\H &&\forall v_h\in M_h.
\endaligned
\right.
\end{equation*}
\end{lemma}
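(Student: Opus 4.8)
The plan is to show that $\Tlh$ maps $W_h$ into itself and that $W_h$ is a $\Tlh$-invariant subspace on which $P^W_h$ acts as the identity, so the two operators commute when restricted appropriately. First I would recall that $\Tlh = \lambda T_h$, so it suffices to prove $P^W_h \circ T_h = T_h \circ P^W_h$ on $\H$ (the factor $\lambda$ depending on the eigenvalue $\lambda$ attached to $\eigl$ is irrelevant to the commuting property, and by the convention stated after Definition~\ref{def:Lambda} it is the same $\lambda$ on both sides). The key algebraic fact is that $W_h = \operatorname{span}\{u_{h,j} : j\in J\}$ is spanned by discrete eigenfunctions of $T_h$: indeed, comparing \eqref{e:discreteEVP} with \eqref{eq:defTAh} and using $\sigma_h = \G_h(u_h)$ (which follows from the first equations of \eqref{e:discreteEVP} and \eqref{eq:Gh}), one sees that $T_h u_{h,j} = \lambda_{h,j}^{-1} u_{h,j}$, i.e.\ each $u_{h,j}$ is an eigenvector of the self-adjoint operator $T_h$ on $(\H,(\cdot,\cdot)_\H)$.

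Next I would use self-adjointness of $T_h\colon\H\to\H$ with respect to $(\cdot,\cdot)_\H$: this is standard for $(0,g)$-type mixed problems since $a$ is symmetric (eliminate $A_h g$ from \eqref{eq:defTAh} to get $(T_h f,g)_\H = a(A_h f, A_h g)$, which is symmetric in $f,g$). A self-adjoint operator commutes with the orthogonal projection onto any of its invariant subspaces; since $W_h$ is spanned by eigenvectors of $T_h$ it is invariant, hence $P^W_h T_h = T_h P^W_h$ on all of $\H$, and multiplying by $\lambda$ gives $P^W_h \Tlh = \Tlh P^W_h = \Lambda_h$ on $\eigl$.

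Finally, for the ``in other words'' part, I would start from the defining system \eqref{e:discreteSource} for $\Tlh g$ with $g = u \in \eigl$, obtaining that $\Tlh u$ satisfies $b(\G_h(\Tlh u), v_h) = -(\lambda u, v_h)_\H$ for all $v_h\in M_h$; then replace $\Tlh u$ by $P^W_h \Tlh u = \Lambda_h u$ using the commuting identity, and on the right-hand side write $(\lambda u, v_h)_\H = (\lambda P^W_h u, v_h)_\H$ — the latter holds because $v_h \in M_h$ but we actually need $v_h$ tested against $W_h$; more carefully, since $\Lambda_h u\in W_h\subset M_h$, replacing $u$ by $P^W_h u$ on the right is legitimate only after noting that $T_h(u - P^W_h u)\perp W_h$, which is exactly the invariance/commuting statement again. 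Thus the displayed system for $\Lambda_h u$ follows. The main obstacle I anticipate is bookkeeping rather than depth: one must be careful that $\Tlh$ is genuinely the composition of the $\H$-self-adjoint operator $T_h$ with the scalar $\lambda$, and that the identity $\sigma_h = \G_h(u_h)$ linking \eqref{e:discreteEVP} to the $T_h$-eigenvalue relation is used correctly; everything else is a short spectral-theory argument.
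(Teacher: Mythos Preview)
Your approach is correct but takes a genuinely different route from the paper. The paper proceeds by direct computation: it expands $\Lambda_h u = \sum_{j\in J} (\Tlh u, u_{h,j})_\H\, u_{h,j}$, notes that this element solves \eqref{eq:defTAh} with right-hand side $g = \sum_{j\in J} \lambda_{h,j}(\Tlh u, u_{h,j})_\H\, u_{h,j}$, and then verifies via the chain $\lambda_{h,j}(\Tlh u,u_{h,j})_\H = -b(\sigma_{h,j},\Tlh u) = a(\G_h(\Tlh u),\sigma_{h,j}) = -b(\G_h(\Tlh u),u_{h,j}) = \lambda(u,u_{h,j})_\H$ that this $g$ coincides with $\lambda P^W_h u$. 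Your argument instead invokes the general spectral-theory fact that a self-adjoint operator commutes with the orthogonal projection onto any invariant subspace; this is more structural and explains \emph{why} the coefficient computation succeeds, whereas the paper's approach is marginally quicker if one only wants the formula. Both are short and sound.

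One comment on your final step: the derivation of the displayed system is more tangled than necessary. Once you have $\Lambda_h u = \Tlh(P^W_h u)$, simply apply \eqref{e:discreteSource} with input $g = P^W_h u$ and the system falls out immediately. Starting from $g=u$ and then trying to ``replace $\Tlh u$ by $P^W_h\Tlh u$'' on the left (these are not equal) while separately justifying $(\lambda u,v_h)_\H = (\lambda P^W_h u,v_h)_\H$ on the right is the wrong bookkeeping; you correctly sensed the difficulty and backed off, but the clean fix is just to use the other factorization of $\Lambda_h$ from the start.
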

\begin{proof}
We adapt the result of~\cite[Lemma~2.2]{Gallistl2014conform}.
The expansion of $\Lambda_h u$ reads as
$ \Lambda_h u = \sum_{j\in J} (\Tlh u, u_{h,j})_\H u_{h,j}$,
thus $\Lambda_h u$ solves the discrete linear system~\eqref{eq:defTAh} with
right-hand side
$g = \sum_{j\in J} (\Tlh u, u_{h,j})_\H \lambda_{h,j}u_{h,j}$.
For any $j\in J$ we have
\begin{equation*}
\aligned
\lambda_{h,j} (\Tlh u, u_{h,j})_\H 
&= -b(\sigma_{h,j},\Tlh u)
=  a(\G_h(\Tlh u),\sigma_{h,j})
= - b(\G_h(\Tlh u),u_{h,j})\\
&= \lambda (u, u_{h,j})_\H,
\endaligned
\end{equation*}
which gives the final result that $\Lambda_h u$ solves the discrete linear
system~\eqref{eq:defTAh} with
right-hand side
$g = \sum_{j\in J}\lambda(u, u_{h,j})_\H u_{h,j}=\lambda P^W_hu$.

\end{proof}

\section{AFEM algorithm and error quantities}
\label{se:algorithm}

As already mentioned, we are interested in the Laplace eigenvalue problem in
mixed form with Dirichlet boundary conditions. Namely, with the notation
introduced in Section~\ref{se:setting}, we are making the following choices:
\[
\aligned
&\Sigma=H(\ddiv;\Omega)\\
&M=\H=L^2(\Omega)\\
&a(\sigma,\tau)=(\sigma,\tau)\\
&b(\tau,v)=(\ddiv\tau,v)
\endaligned
\]
for an open, bounded, simply-connected polygonal Lipschitz domain $\Omega$.

It follows, in particular that the seminorm $|\cdot|_a$ is the norm in
$(L^2(\Omega))^2$.
Our analysis applies to more general operators (for instance, Neumann boundary
conditions or non-constant coefficients), but we stick to this simpler
example for the sake of readability.

We discretize the problem with standard mixed finite elements (including
Raviart--Thomas, Brezzi--Douglas--Marini, etc.),
see~\cite{BoffiBrezziFortin2013} for more detail. It is well-known that this
choice satisfies the assumptions discussed in Section~\ref{se:setting} (see,
for instance,~\cite{bbg2}).

Moreover, we observe that the following relation (part of the commuting
diagram) holds true:
\begin{equation}
\ddiv(\Sigma_h) = M_h
\label{eq:sparita}
\end{equation}

Let us first introduce our error indicator.

\begin{definition}
Let $\T_h$ be a triangulation of $\Omega$ and let
$(\sigma_{h,j},u_{h,j})\in\Sigma_h\times M_h$ be a discrete eigensolution
computed on the mesh $\T_h$.
Then, for all $T\in\T_h$ we define
\begin{equation*}
\eta_{h,j}(T)^2
=
   \| h_T (\sigma_{h,j} - \nabla u_{h,j}) \|_T^2 
   + \| h_T \curl \sigma_{h,j} \|_T^2 
   + \sum_{E\in\mathcal E(T)} h_E \| [\sigma_{h,j}]_E \cdot t_E\|_E^2,
\end{equation*}
where $h_T$ is the diameter of $T$, $\mathcal{E}(T)$ denotes the set of edges
of $T$, $h_E$ is the length of the edge $E$, and $t_E$ is its unit tangent
vector. As usual, $[\sigma_h]_E\cdot t_E$ denotes the jump of the trace of
$\sigma_h\cdot t_E$ for internal edges and the trace for boundary edges.

Given a set $\mathcal{M}$ of elements of $\T_h$, we define
\[
\eta_{h,j}(\mathcal{M})^2=\sum_{T\in\mathcal{M}}\eta_{h,j}(T)^2.
\]
\label{def:indicator}
\end{definition}

\subsection{Adaptive algorithm}

The adaptive algorithm consists of the standard four steps: solve, estimate,
mark, and refine. In the description of the fours steps, we describe how the
algorithms runs from level $\ell$ to $\ell+1$.

\begin{description}

\item[Solve.] Given a mesh $\T_\ell$ the algorithm computes the eigensolutions
of~\eqref{e:discreteEVP} belonging to the cluster
$(\lambda_{\ell,j},\sigma_{\ell,j},u_{\ell,j})$ for $j\in J$. We assume that
the discrete solution is computed exactly.

\item[Estimate.] The algorithm computes the local contributions of the error
estimator for the eigenfunctions in the cluster
$\big\{\eta_{\ell,j}(T)\big\}_{T\in\T_\ell}$ ($j\in J$).

\item[Mark.] The algorithm uses the well known D\"orfler marking 
strategy \cite{Do}.
Given a bulk parameter $\theta\in(0,1]$, a minimal subset
$\mathcal{M}_\ell\subseteq\T_\ell$ is identified such that
\begin{equation*}
    \theta \sum_{j\in J}\eta_{\ell,j}(\T_\ell)^2
\le  \sum_{j\in J}\eta_{\ell,j}(\mathcal{M}_\ell)^2.
\end{equation*}
The elements belonging to $\mathcal{M}_\ell$ are marked for refinement.

\item[Refine.] A new triangulation $\T_{\ell+1}$ is generated, as the smallest
admissible refinement of $\T_\ell$ satisfying
$\mathcal{M}_\ell\cap\T_{\ell+1}=\emptyset$ by using the refinement rules
of~\cite{BDD,stevenson2008}. 
Figure~\ref{fig:NVB} shows possible refinements of a
triangle.

\end{description}

To summarize, the adaptive algorithm accepts as \textbf{input} the bulk
parameter $\theta$ and the initial mesh $\T_0$
(with proper initialization of refinement edges 
as in \cite{BDD,stevenson2008}),
and returns as \textbf{output}
a sequence of meshes $\{\T_{\ell}\}$ and of discrete eigenpairs
$\{(\lambda_{\ell,j},\sigma_{\ell,j},u_{\ell,j})\}_{j\in J}$.

\begin{figure}[tb]
\centering
\begin{tabular}{ccccc} 
\setlength{\unitlength}{2.5mm}
\begin{picture}(7,3.5)
\thicklines
\put(0,0){\line(1,0){7}}
\thinlines
\put(7,0){\line(-1,1){3.5}}
\put(0,0){\line(1,1){3.5}}
\end{picture}&
\setlength{\unitlength}{2.5mm}
\begin{picture}(7,3.5)
\thicklines
\put(7,0){\line(-1,1){3.5}}
\put(0,0){\line(1,1){3.5}}
\thinlines
\put(3.5,3.5){\line(0,-1){3.5}}
\put(0,0){\line(1,0){7}}
\end{picture}&
\setlength{\unitlength}{2.5mm}
\begin{picture}(7,3.5)
\thicklines
\put(0,0){\line(1,0){3.5}}
\put(3.5,0){\line(0,1){3.5}}
\put(3.5,3.5){\line(1,-1){3.5}}
\thinlines
\put(3.5,0){\line(1,0){3.5}}
\put(3.5,3.5){\line(-1,-1){3.5}}
\put(3.5,0){\line(-1,1){1.75}}
\end{picture} &

\setlength{\unitlength}{2.5mm}
\begin{picture}(7,3.5)
\thicklines
\put(3.5,0){\line(1,0){3.5}}
\put(3.5,0){\line(0,1){3.5}}
\put(0,0){\line(1,1){3.5}}
\thinlines
\put(0,0){\line(1,0){3.5}}
\put(7,0){\line(-1,1){3.5}}
\put(3.5,0){\line(1,1){1.75}}
\end{picture}&
\setlength{\unitlength}{2.5mm}
\begin{picture}(7,3.5)
\thicklines
\put(0,0){\line(1,0){7}}
\put(3.5,0){\line(0,1){3.5}}
\thinlines
\put(0,0){\line(1,1){3.5}}
\put(7,0){\line(-1,1){3.5}}
\put(1.75,1.75){\line(1,-1){1.75}}
\put(5.25,1.75){\line(-1,-1){1.75}}
\end{picture}
\end{tabular}
\caption{Possible refinements of a triangle $T$ in one level
in 2D. The thick lines indicate the refinement edges of 
the sub-triangles as in \cite{BDD,stevenson2008}.}
\label{fig:NVB}
\end{figure}
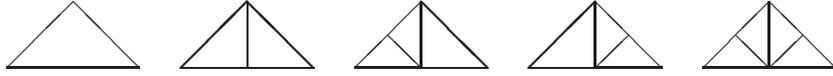

Finally, we shall make use of the following notation: given an initial mesh
$\T_0$, regular in the sense of Ciarlet, we denote by $\mathbb{T}$ the set of
\emph{admissible meshes} in the sense that a mesh in $\mathbb{T}$ is a
refinement of $\T_0$ obtained using the rules 
of~\cite{BDD,stevenson2008}. 

\subsection{Error quantities and theoretical error indicator}

The following definition introduces a metric 
in $M$.
\begin{definition}\label{def:distance}
$d:M\times M\to\RE$ is defined as
\[
d(v,w)=\sqrt{\|v-w\|^2+|\G(v)-\G(w)|_a^2}
\]
When $v$ (resp.\ $w$) belongs to $M_h$, then $\G_h(v)$ (resp.\ $\G_h(w)$)
should be used.

\end{definition}

\begin{remark}\label{r:constants}
We note that it may be useful to balance the terms in the square
root of Definition~\ref{def:distance} in terms of $\lambda$.
In particular, if $v$ and $w$ are related to eigenfunctions with frequency
$\lambda$, the right scaling would involve multiplying by $\lambda$ the term
$\|v-w\|$.
This is of particular interest it one aims to quantify the conditions
on the initial mesh-size.
In this paper, we do not aim at such a quantification
and refer the interested reader to \cite{Gallistl2014conform} for such a 
$\lambda$ explicit analysis in the context of conforming standard
finite elements.
\end{remark}

This distance allows us to evaluate the gap between discrete and continuous
eigenfunctions in the cluster.

\begin{definition}
\label{def:gap}
The following quantity measures how combinations of eigenfunctions in the
cluster $W$ are approximated by their discrete counterparts in $W_h$.
\[
\delta(W,W_h)
 = \sup_{\substack{u\in W \\ \| u\| = 1}} \inf_{v_h\in W_h} d(u,v_h).
\]

\end{definition}

Given a refinement $\T_\ell\in\mathbb{T}$ of the initial mesh $\T_0$, our
theory is based on the introduction of the following \emph{non-computable}
error indicator $\mu_{\ell}$ which will be proved equivalent to the
\emph{computable} indicator $\eta_\ell$.

\begin{definition}
Let $\T_h\in\mathbb{T}$ be a triangulation and for all $T\in\T_h$ and
$g_h\in M_h$ let us consider the following seminorm
\[
\begin{split}
|g_h|_{\eta,T}^2={}&
\| h_T (\G_h(g_h) - \nabla g_h) \|_T^2 
   + \| h_T \curl \G_h(g_h) \|_T^2 \\
&   + \sum_{E\in\mathcal E(T)} h_E \| [\G_h(g_h)]_E \cdot t_E\|_E^2,
\end{split}
\]
so that
\[
\eta_{h,j}(T)=|u_{h,j}|_{\eta,T}.
\]
Then, given an eigenfunction
$(\sigma,u)$ associated to the eigenvalue $\lambda$ (in particular, this is
used in the definition of $\Lambda_h$), we define
\begin{equation*}
\mu_h(u;T)=|\Lambda_h u|_{\eta,T}.
\end{equation*}
Given a set $\mathcal{M}$ of elements of $\T_h$, we define
\[
\mu_h(u;\mathcal{M})^2=\sum_{T\in\mathcal{M}}\mu_h(u;T)^2.
\]

\end{definition}

The next lemma is of technical nature and gives a criterion for linear
independence.
It generalizes \cite[Prop.~3.2]{CarstensenGedicke2011}.
\begin{lemma}\label{l:epsilonlemma}
Recall the notation $\N=\operatorname{card}(J)$ and suppose that
 \begin{equation}\label{e:epsilon}
 \varepsilon 
 = \max_{j\in J} \| u_j - \Lambda_h u_j \|
 \leq \sqrt{1 + 1/(2\N)} -1 .
 \end{equation}
 Then, 
 $\left\{ \Lambda_h u_j\right\}_{j\in J}$
 forms a basis of $W_h$.
 For any $w_h \in W_h$ with $\|w_h\|=1$,
 the coefficients of the representation
 $w_h = \sum_{j\in J}\gamma_j \Lambda_h u_j$
 are controlled as
\begin{equation}\label{e:ABbound}
  \sum_{j\in J} |\gamma_j|^2
  \leq  2+ 4 \N.
\end{equation}
 \end{lemma}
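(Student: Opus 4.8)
The plan is to exploit that $\{u_j\}_{j\in J}$ is an $\H$-orthonormal system and that, by the hypothesis~\eqref{e:epsilon}, each $\Lambda_h u_j$ is $\H$-close to $u_j$. Put $e_j:=\Lambda_h u_j-u_j\in M$, so $\|e_j\|\le\varepsilon$ for all $j\in J$. For arbitrary scalars $(\gamma_j)_{j\in J}$, set $\gamma:=\big(\sum_{j\in J}|\gamma_j|^2\big)^{1/2}$; orthonormality gives $\big\|\sum_{j\in J}\gamma_j u_j\big\|=\gamma$, while the triangle inequality together with Cauchy--Schwarz gives $\big\|\sum_{j\in J}\gamma_j e_j\big\|\le\varepsilon\sum_{j\in J}|\gamma_j|\le\varepsilon\sqrt{\N}\,\gamma$. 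The role of~\eqref{e:epsilon} is precisely to force $\varepsilon\sqrt{\N}$ to be bounded away from $1$ uniformly in $\N$: since $\sqrt{1+x}\le 1+x/2$, one has $\varepsilon\le\tfrac{1}{4\N}$, hence $\varepsilon\sqrt{\N}\le\tfrac{1}{4\sqrt{\N}}\le\tfrac14$.

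First I would prove linear independence, which combined with a dimension count yields the basis property. If $\sum_{j\in J}\gamma_j\Lambda_h u_j=0$, then $\sum_{j\in J}\gamma_j u_j=-\sum_{j\in J}\gamma_j e_j$, so $\gamma=\big\|\sum_{j\in J}\gamma_j u_j\big\|=\big\|\sum_{j\in J}\gamma_j e_j\big\|\le\tfrac14\gamma$, forcing $\gamma=0$, i.e.\ $\gamma_j=0$ for every $j$. Since $\{u_{h,j}\}_{j\in J}$ is $\H$-orthonormal we have $\dim W_h=\N=\card(J)$, so the $\N$ linearly independent vectors $\{\Lambda_h u_j\}_{j\in J}\subset W_h$ form a basis of $W_h$; in particular every $w_h\in W_h$ admits a unique representation $w_h=\sum_{j\in J}\gamma_j\Lambda_h u_j$.

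To bound the coefficients, take $w_h\in W_h$ with $\|w_h\|=1$ and split $w_h=\sum_{j\in J}\gamma_j u_j+\sum_{j\in J}\gamma_j e_j$. Then $\gamma=\big\|\sum_{j\in J}\gamma_j u_j\big\|\le\|w_h\|+\big\|\sum_{j\in J}\gamma_j e_j\big\|\le 1+\varepsilon\sqrt{\N}\,\gamma$, whence $\gamma\le(1-\varepsilon\sqrt{\N})^{-1}$ and $\sum_{j\in J}|\gamma_j|^2=\gamma^2\le(1-\varepsilon\sqrt{\N})^{-2}\le(3/4)^{-2}=\tfrac{16}{9}$, which is in fact stronger than~\eqref{e:ABbound}. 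The whole computation is elementary; the only step that needs a moment's care is deriving the uniform bound $\varepsilon\sqrt{\N}\le\tfrac14$ from~\eqref{e:epsilon} (any hypothesis making $\varepsilon\sqrt{\N}$ bounded away from $1$ would do, and~\eqref{e:epsilon} is a convenient such choice), so I do not anticipate a genuine obstacle.
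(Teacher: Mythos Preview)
Your argument is correct, and it takes a genuinely different route from the paper's. The paper does not give details but states that the proof ``employs Gershgorin's theorem'' and follows \cite[Lemma~5.1]{Gallistl2014conform} verbatim; the idea there is to consider the Gram matrix $G=\big((\Lambda_h u_j,\Lambda_h u_k)\big)_{j,k\in J}$, observe that $|G_{jk}-\delta_{jk}|\le 2\varepsilon+\varepsilon^2=(1+\varepsilon)^2-1\le 1/(2\N)$ by~\eqref{e:epsilon}, and then use Gershgorin's disks to conclude that the smallest eigenvalue of $G$ is at least $1-\N\cdot 1/(2\N)=1/2$, whence $G$ is nonsingular (linear independence) and $\sum_j|\gamma_j|^2\le\lambda_{\min}(G)^{-1}\|w_h\|^2$. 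Your approach bypasses the Gram matrix entirely: you perturb directly from the orthonormal system $\{u_j\}$ via $e_j=\Lambda_h u_j-u_j$ and a triangle/Cauchy--Schwarz estimate, using only that $\varepsilon\sqrt{\N}\le 1/4$. This is more elementary and in fact yields the sharper, $\N$-independent bound $\sum_j|\gamma_j|^2\le 16/9$, which of course implies~\eqref{e:ABbound}. Either argument suffices for the use made of the lemma (in Lemma~\ref{l:estcomparison}); the Gershgorin route makes the role of the specific threshold $\sqrt{1+1/(2\N)}-1$ more transparent, while yours shows that any condition ensuring $\varepsilon\sqrt{\N}<1$ would do.
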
%

\begin{proof}
The proof employs Gershgorin's theorem.
Since the proof follows verbatim the lines of
\cite[Lemma~5.1]{Gallistl2014conform}, it is omitted here.
\end{proof}

The following lemma states the equivalence between the two introduced
estimators. It is clear that the adaptive algorithm will make use of the
computable indicator $\eta$, while the indicator $\mu$ will be used for the
analysis.

\begin{lemma}[Local comparison of the error estimators]\label{l:estcomparison}
Provided the initial mesh-size is small enough such that 
\eqref{e:epsilon} is satisfied,
it holds for any $T\in\T_h$ that
\begin{equation*}
\N^{-1}
\sum_{j\in J} \mu_h(u_j;T)^2
\leq
\left(\frac{\B}{\A}\right)^2
\sum_{j\in J} \eta_{h,j}(T)^2
\leq
\left(\frac{\B}{\A}\right)^2
(2\N+4\N^2)
\sum_{j\in J} \mu_h(u_j;T)^2
\end{equation*}
where $[\A,\B]$ denotes a real interval containing the (continuous and
discrete) eigenvalue cluster and $\N$ is the number of eigenvalues in the
cluster.
\end{lemma}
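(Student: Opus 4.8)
The plan is to prove the two-sided estimate termwise, exploiting the fact that, under hypothesis~\eqref{e:epsilon}, Lemma~\ref{l:epsilonlemma} guarantees that both $\{u_{h,j}\}_{j\in J}$ and $\{\Lambda_h u_j\}_{j\in J}$ are bases of $W_h$, related by a change-of-basis matrix whose entries and inverse are controlled. The key observation is that the seminorm $|\cdot|_{\eta,T}$ is, for each fixed $T$, a seminorm on the finite-dimensional space $W_h$ composed with the linear map $g_h\mapsto(\G_h(g_h)-\nabla g_h,\ \curl\G_h(g_h),\ [\G_h(g_h)]_E\cdot t_E)$, hence $|\cdot|_{\eta,T}$ restricted to $W_h$ behaves like a (semi)norm and in particular satisfies the triangle inequality and is compatible with linear combinations. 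Thus if $\Lambda_h u_j=\sum_{i\in J}\beta_{ji}u_{h,i}$ and conversely $u_{h,j}=\sum_{i\in J}\gamma_{ji}\Lambda_h u_i$, then $|\Lambda_h u_j|_{\eta,T}\le\sum_i|\beta_{ji}|\,|u_{h,i}|_{\eta,T}$ and similarly for the reverse direction.

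First I would make the change of basis explicit. Writing $\Lambda_h u_j=P^W_h\Tlh u_j$ and expanding in the discrete eigenbasis, $\Lambda_h u_j=\sum_{i\in J}(\Tlh u_j,u_{h,i})_\H u_{h,i}$; the coefficient $(\Tlh u_j,u_{h,i})_\H=\lambda_j(T_h u_j,u_{h,i})_\H$, and using the identity computed in the proof of Lemma~\ref{l:commuting} one gets $\lambda_{h,i}(\Tlh u_j,u_{h,i})_\H=\lambda_j(u_j,u_{h,i})_\H$, so $\beta_{ji}=(\lambda_j/\lambda_{h,i})(u_j,u_{h,i})_\H$. Since the whole cluster (continuous and discrete) lies in $[\A,\B]$, each $|\lambda_j/\lambda_{h,i}|\le\B/\A$, and $\sum_i|(u_j,u_{h,i})_\H|^2\le\|u_j\|_\H^2=1$. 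Hence by Cauchy--Schwarz $\sum_i|\beta_{ji}|\le(\B/\A)\sqrt{\N}$ (and $\sum_i|\beta_{ji}|^2\le(\B/\A)^2$). For the reverse matrix $(\gamma_{ji})$, apply Lemma~\ref{l:epsilonlemma} with $w_h=u_{h,j}$ (which has $\|u_{h,j}\|=1$) to obtain $\sum_i|\gamma_{ji}|^2\le 2+4\N$, hence $\sum_i|\gamma_{ji}|\le\sqrt{\N(2+4\N)}=\sqrt{2\N+4\N^2}$.

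Now I combine these. For the left inequality: $\mu_h(u_j;T)=|\Lambda_h u_j|_{\eta,T}\le\sum_i|\beta_{ji}|\,|u_{h,i}|_{\eta,T}=\sum_i|\beta_{ji}|\,\eta_{h,i}(T)$, so by Cauchy--Schwarz $\mu_h(u_j;T)^2\le\big(\sum_i|\beta_{ji}|^2\big)\big(\sum_i\eta_{h,i}(T)^2\big)\le(\B/\A)^2\sum_i\eta_{h,i}(T)^2$; summing over $j\in J$ gives $\sum_j\mu_h(u_j;T)^2\le\N(\B/\A)^2\sum_i\eta_{h,i}(T)^2$, i.e.\ the first claimed inequality after dividing by $\N$ — wait, more precisely it yields $\N^{-1}\sum_j\mu_h(u_j;T)^2\le(\B/\A)^2\sum_i\eta_{h,i}(T)^2$. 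For the right inequality: symmetrically $\eta_{h,j}(T)=|u_{h,j}|_{\eta,T}\le\sum_i|\gamma_{ji}|\,|\Lambda_h u_i|_{\eta,T}=\sum_i|\gamma_{ji}|\,\mu_h(u_i;T)$, so $\eta_{h,j}(T)^2\le\big(\sum_i|\gamma_{ji}|^2\big)\sum_i\mu_h(u_i;T)^2\le(2+4\N)\sum_i\mu_h(u_i;T)^2$; summing over $j$ gives $\sum_j\eta_{h,j}(T)^2\le\N(2+4\N)\sum_i\mu_h(u_i;T)^2=(2\N+4\N^2)\sum_i\mu_h(u_i;T)^2$, which after multiplying by $(\B/\A)^2$ (a number $\ge1$) yields the second claimed inequality. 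The main obstacle — really the only subtle point — is justifying that $|\cdot|_{\eta,T}$ is subadditive on linear combinations in $W_h$, i.e.\ that $\G_h$ is linear and that the three constituent terms ($L^2$ norms over $T$, over edges, and of the tangential jumps) obey the triangle inequality: this is immediate since $\G_h$ is defined through a linear variational problem and each term is an $L^2$-type seminorm, but it should be stated carefully because $|\cdot|_{\eta,T}$ is only a \emph{seminorm} (it may vanish on nonzero elements), so one must use subadditivity directly rather than any norm-equivalence on $W_h$.
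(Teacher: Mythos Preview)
Your proposal is correct and follows essentially the same approach as the paper's proof: expand $\Lambda_h u_k$ in the orthonormal basis $\{u_{h,j}\}_{j\in J}$, compute the coefficients as $\lambda_k\lambda_{h,j}^{-1}(u_k,u_{h,j})$ and bound their $\ell^2$ sum by $(\B/\A)^2$ via Bessel's inequality, then use the triangle and Cauchy--Schwarz inequalities for the seminorm $|\cdot|_{\eta,T}$; for the reverse direction invoke Lemma~\ref{l:epsilonlemma} exactly as you do. Your explicit remark that $|\cdot|_{\eta,T}$ is a seminorm (so that subadditivity applies to linear combinations) and the observation that the factor $(\B/\A)^2\ge 1$ can be harmlessly inserted on the right are the only additions beyond what the paper writes, and both are straightforward.
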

\begin{proof}
The proof follows from a perturbation analysis as in
\cite[Prop.~5.1]{Gallistl2014conform}.
We include the proof for self-contained reading.
Let $k\in J$ and consider the 
expansion of
$\Lambda_h u_k = \sum_{j\in J} \gamma_j u_{h,j}$
with coefficients 
$\gamma_j  = (\Lambda_h u_k, u_{h,j})$.
The definition of $\Lambda_h$ and the symmetry yield
\begin{equation*}
\begin{aligned}
\gamma_j 
  &
  = (\Lambda_h u_k, u_{h,j})
  = (\Thl u_k, u_{h,j})
  =-\lambda_{h,j}^{-1}b(\sigma_{h,j}, \Thl u_k)
\\
  &
  = \lambda_{h,j}^{-1}a(\sigma_{h,j},\G_h(\Thl u_k))
  =-\lambda_{h,j}^{-1}b( \G_h(\Thl u_k),u_{h,j})
  = \lambda_{h,j}^{-1}\lambda_k (u_k,u_{h,j}) .
\end{aligned}
\end{equation*}
Since $\{u_{h,j}\}_{j\in J}$ is an orthonormal system, we arrive at
$\sum_{j\in J} \gamma_j^2 \leq (\B/\A)^2$, which implies
\begin{equation*}
  |\Lambda_h u_k|^2_{\eta,T}
 \leq 
  \bigg(\sum_{j\in J}\gamma_j^2\bigg)\sum_{j\in J}|u_{h,j}|^2_{\eta,T}\le
   \left(\frac{\B}{\A}\right)^2\sum_{j\in J}|u_{h,j}|^2_{\eta,T}.
\end{equation*}
This proves the first stated inequality.

Lemma~\ref{l:epsilonlemma} shows that there exist real coefficients 
$\{\delta_j\mid j\in J\}$   such that
\begin{equation*}
u_{h,k} = \sum_{j\in J} \delta_j \Lambda_h u_j
\quad\text{and}\quad
\sum_{j\in J} \delta_j^2 \leq 2+4\N  .
\end{equation*}
The triangle and Cauchy inequalities  lead to
\begin{equation*}
  |u_{h,k}|^2_{\eta,T}
   \leq
     \bigg(\sum_{j\in J} \delta_j^2 \bigg)
     \sum_{j\in J} |\Lambda_h u_j|^2_{\eta,T}
   \leq
   (2+4\N)    \sum_{j\in J}  |\Lambda_h u_j|^2_{\eta,T}.
 \end{equation*}
This shows the second stated inequality and concludes the proof.
\end{proof}

\section{Optimal convergence of the adaptive scheme}
\label{se:theorem}

In this section we state the main theorem showing the optimal convergence of
our adaptive scheme and sketch the principal lines of its proof. The structure
of the proof is closely related to~\cite{CasconKreuzerNochettoSiebert2008} and
relies on several intermediate results which, for the sake of 
readability, will be postponed to Section~\ref{se:lemmas}.

As usual in this context, the convergence is
measured by introducing a suitable nonlinear approximation class in the spirit
of~\cite{BDD}.
For any $m\in\mathbb{N}$, we denote by
\[
\mathbb T(m)=
  \{\T\in\mathbb T 
        \mid 
    \card(\T) - \card(\T_0)
               \leq m \}
\]
the set of admissible
triangulations in $\mathbb T$ whose cardinality differs from that
of $\T_0$ by $m$ or less.

The best algebraic convergence rate $s\in(0,+\infty)$ obtained by any
admissible mesh in $\mathbb{T}$ is characterized in terms of the following
seminorm
\begin{equation*}
 |W|_{\mathcal{A}_s} 
 = \sup_{m\in\mathbb{N}} m^s
       \inf_{\T\in\mathbb{T}(m)}  \delta(W,W_{\T}).
\end{equation*}
In particular, we have $|W|_{\mathcal{A}_s}<\infty$ if the rate of
convergence $\delta(W,W_{\T})=O(m^{-s})$ holds true for the optimal
triangulations $\T$ in $\mathbb{T}(m)$.

The main results of this section, stated in Theorem~\ref{th:main}, shows that
the same optimal rate of convergence is reached by the error quantity
$\delta(W,W_{\T_{\ell}})$ associated with the mesh sequence $\{\T_{\ell}\}$
obtained from the adaptive algorithm presented in Section~\ref{se:algorithm}.

\begin{theorem}
Provided the initial mesh-size and the bulk parameter $\theta$ are small
enough, if for the eigenvalue
cluster $W$ it holds $|W|_{\mathcal{A}_s}<\infty$, then the sequence of
discrete clusters $W_\ell$ computed on the mesh $\T_\ell$ satisfies the
optimal estimate
\begin{equation*}
\delta(W,W_\ell)
    (\card(\T_\ell) - \card(\T_0))^s
\lesssim |W|_{\mathcal{A}_s}.
\end{equation*}
\label{th:main}
\end{theorem}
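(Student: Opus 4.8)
The plan is to follow the now-standard Cascón--Kreuzer--Nochetto--Siebert (CKNS) paradigm adapted to the eigenvalue-cluster setting, using the theoretical indicator $\mu_\ell$ in place of $\eta_\ell$ throughout and invoking Lemma~\ref{l:estcomparison} only at the beginning and end to pass between the two. Concretely, I would first translate the D\"orfler marking property: since the algorithm performs D\"orfler marking with parameter $\theta$ on $\sum_{j\in J}\eta_{\ell,j}(T)^2$, the local equivalence of Lemma~\ref{l:estcomparison} shows that the marked set $\mathcal M_\ell$ also satisfies a D\"orfler property for $\mu_\ell(\cdot;\cdot)^2=\sum_{j}\mu_h(u_j;\cdot)^2$, with a modified bulk parameter $\tilde\theta=\tilde\theta(\theta,\N,\B/\A)$ which is still positive and can be made as large as needed by taking $\theta$ large. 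So it suffices to prove optimality of the adaptive loop driven directly by $\mu_\ell$.

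The four ingredients of the CKNS machinery, which I would establish as the intermediate lemmas deferred to Section~\ref{se:lemmas}, are: (i) \emph{stability and reduction} of $\mu$ under refinement --- $\mu_{\ell+1}$ on refined elements contracts by a factor related to the bisection depth, with a perturbation term controlled by the distance $d(\Lambda_{\ell+1}u_j,\Lambda_\ell u_j)$; (ii) \emph{quasi-orthogonality} of the errors $\delta(W,W_\ell)$ along the mesh sequence, i.e.\ $\delta(W,W_{\ell+1})^2 \le (1+\text{small})\,\delta(W,W_\ell)^2 - c\,(\text{one-step error decrement})^2$, which in the mixed eigenvalue context is the delicate point (see below); (iii) \emph{reliability} $\delta(W,W_\ell)^2 \lesssim \mu_\ell^2 + \text{h.o.t.}$; and (iv) \emph{discrete reliability} $\delta(W_{\mathcal T},W_\ell)^2 \lesssim \mu_\ell(\mathcal R_{\ell\to\mathcal T})^2$ where $\mathcal R_{\ell\to\mathcal T}$ is the set of refined elements plus a layer, again modulo higher-order terms involving the $L^2$-superconvergence quantity mentioned in the introduction. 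Granting these, the contraction of a combined quantity $\delta(W,W_\ell)^2 + \beta\,\mu_\ell^2$ follows from (i), (ii), (iii) exactly as in CKNS, giving linear convergence; and the optimality argument --- comparing $\mathcal M_\ell$ with an optimal refinement realizing $|W|_{\mathcal A_s}$, using (iv) and the D\"orfler minimality of $\mathcal M_\ell$ together with the mesh-overlay/complexity estimates of~\cite{BDD,stevenson2008} --- yields $\card(\T_\ell)-\card(\T_0)\lesssim \delta(W,W_\ell)^{-1/s}$, hence the claimed bound.

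The main obstacle, and the place where the mixed formulation genuinely departs from standard conforming FEM, is quasi-orthogonality (ingredient (ii)). In the conforming case one exploits the Galerkin orthogonality of nested spaces together with higher-order $L^2$-convergence of the eigenfunction error; here neither is available verbatim, because $\Sigma_\ell \subset \Sigma_{\ell+1}$ but the scalar spaces interact through the operators $\G_\ell$ and $\Lambda_\ell$ in a way that is not a plain orthogonal projection, and the naive $L^2$ estimate $\|u-u_\ell\| = o(\cdot)$ fails. The resolution is to use the abstract superconvergence result alluded to in the introduction: a carefully chosen error quantity --- presumably of the form $\|\Lambda_{\ell+1}u_j - \Lambda_\ell u_j\|$ or a projected difference --- is of higher order in $L^2$ relative to the energy-type distance $d$, and this is precisely what makes the perturbation terms in (i) and the cross terms in (ii) absorbable. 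I would therefore isolate this superconvergence estimate as a standalone lemma (for general $\rho(h)$-type assumptions), and then the quasi-orthogonality becomes a matter of expanding $d(\Lambda_{\ell+1}u_j,\cdot)^2$, using the commuting property of Lemma~\ref{l:commuting}, the relation $\ddiv(\Sigma_h)=M_h$ from~\eqref{eq:sparita}, and the symmetry of $a$, the remaining cross terms being bounded by the superconvergent quantity times $\rho(h_\ell)$. The smallness conditions on the initial mesh-size (to guarantee~\eqref{e:epsilon} and the spectral separation of the cluster) and on $\theta$ (to dominate the perturbation constants) enter exactly at these absorption steps.
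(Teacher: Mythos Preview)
Your outline is essentially the paper's proof: the same CKNS skeleton, the same contraction quantity (the paper writes it as $\sum_j\mu_\ell(u_j,\T_\ell)^2+\beta\sum_j d(u_j,\Lambda_\ell u_j)^2$ rather than in terms of $\delta$, but these are equivalent by reliability/efficiency), the same identification of the superconvergence estimate as the key to quasi-orthogonality, and the same overlay/BDV complexity argument for optimality.

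Two small corrections. First, the phrase ``can be made as large as needed by taking $\theta$ large'' is backwards: for the optimality step you need the effective bulk parameter to be \emph{small} enough so that the refined set $\T_\ell\setminus\widehat\T_\ell$ coming from the discrete reliability also satisfies the D\"orfler criterion, and this forces $\theta$ small. Second, you cannot quite ``prove optimality of the loop driven directly by $\mu_\ell$'' and only translate at the end, because $\mathcal M_\ell$ has minimal cardinality among sets satisfying D\"orfler for $\eta$, not for $\mu$; the paper handles this by establishing the key bulk inequality~\eqref{eq:keyargument} for $\mu$, then invoking Lemma~\ref{l:estcomparison} to deduce that $\T_\ell\setminus\widehat\T_\ell$ satisfies D\"orfler for $\eta$ with the original $\theta$, and only then using minimality of $\mathcal M_\ell$. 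This is a one-line fix to your plan, not a structural issue.
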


\begin{proof}
We follow the lines of the proof of Theorem~3.1 in~\cite{Gallistl2014conform}.
The main arguments are the same as in~\cite{CasconKreuzerNochettoSiebert2008}.

Given a positive $\beta$, we consider the quantity
\[
  \xi_\ell^2=  \sum_{j\in J}\mu_\ell(u_j,\T_\ell)^2
                    + \beta \sum_{j\in J} d(u_j,\Lambda_\ell u_j)^2
\]
which will be used in the contraction argument of
Proposition~\ref{p:contraction}. We do not consider the trivial case
$\xi_0 = 0$.
Choose $0<\tau\leq |W|_{\mathcal{A}_s}^2/\xi_0^2$, 
and set $\varepsilon(\ell)=\sqrt{\tau}\,\xi_\ell$. 
Let $N(\ell)\in\mathbb{N}$ be minimal with the property
\begin{equation*}
  |W|_{\mathcal{A}_s}^2
 \leq \varepsilon(\ell)^2\,N(\ell)^{2s}.
\end{equation*}
It can be easily seen that $N(\ell)>1$, otherwise
\[
|W|_{\mathcal{A}_s}\le\varepsilon(\ell)
\]
but this, together with the definition of $\varepsilon(\ell)$, would violate
the contraction property of Proposition~\ref{p:contraction}.

From the minimality of $N(\ell)$ it turns out that
\begin{equation}\label{e:optNellBound}
N(\ell)
\leq 2 |W|_{\mathcal{A}_s}^{1/s}
            \varepsilon(\ell)^{-1/s}
\quad\text{for all }\ell\in\mathbb{N}_0.
\end{equation}
Let $\widetilde\T_\ell\in\mathbb{T}$ denote the
optimal triangulation of cardinality
$$
 \card(\widetilde\T_\ell)
 \leq\card(\T_0) + N(\ell)
$$
in the sense that the operator
$\widetilde\Lambda = \Lambda_{\widetilde\T_\ell}$ of
Definition~\ref{def:Lambda} with respect to the mesh $\widetilde\T_\ell$
satisfies
\begin{equation} \label{e:tildeToptimal}
  \sum_{j\in J} d(u_j, \widetilde\Lambda u_j)^2 
\leq 
    N(\ell)^{-2s} |W|_{\mathcal{A}_s}^2
\leq \varepsilon(\ell)^2.
\end{equation}
Let us consider the overlay $\widehat\T_\ell$, that is the smallest common
refinement of $\T_\ell$ and $\widetilde\T_\ell$, which is 
known \cite{CasconKreuzerNochettoSiebert2008} to satisfy
\begin{equation}\label{e:overlayProperty}
\card(\T_\ell \setminus\widehat\T_\ell)\le
  \card(\widehat\T_\ell)  
 - \card(\T_\ell)
\leq 
  \card(\widetilde\T_\ell) -
        \card(\T_0)
\leq 
   N(\ell) .
\end{equation}
This relation and \eqref{e:optNellBound}--\eqref{e:overlayProperty}
lead to
\begin{equation}\label{e:OptEst1}
  \card(\T_\ell \setminus\widehat\T_\ell)
  \leq N(\ell)
 \leq 2 |W|_{\mathcal{A}_s}^{1/s}
            \varepsilon(\ell)^{-1/s}.
\end{equation}
Let $\widehat\Lambda$ denote the operator $\Lambda_{\widehat\T_\ell}$
with respect to the mesh $\widehat\T_\ell$.

The following estimate
\begin{equation}\label{e:step2}
 \sum_{j\in J}d(u_j,\widehat\Lambda u_j)^2
 \leq  3\varepsilon(\ell)^2
\end{equation}
follows from the quasi-orthogonality (see Proposition~\ref{l:qoI})
applied to $\T_h=\widehat\T_\ell$ and $\T_H=\widetilde\T_\ell$. Indeed
\begin{equation*}
(1-\Cqo\rho(h_0))
\sum_{j\in J}d(u_j,\widehat\Lambda u_j)^2
\leq
(1+\Cqo\rho(h_0))
\sum_{j\in J}d(u_j,\widetilde\Lambda u_j)^2 .
\end{equation*}
Estimate~\eqref{e:step2} follows from the mesh-size condition
$\Cqo\rho(h_0)\leq 1/2$ and \eqref{e:tildeToptimal}.

We now show the existence of a constant $C_1$ such that
\begin{equation}
\sum_{j\in J}
 \mu_\ell(u_j,\T_\ell)^2
   \leq
    C_1
    \sum_{j\in J}
    \mu_\ell(u_j,\T_\ell\setminus\widehat\T_\ell)^2.
\label{eq:keyargument}
\end{equation}

From the triangle inequality and the discrete reliability 
(see Proposition~\ref{p:drel}) we obtain for any $j\in J$
\begin{equation*}
\begin{split}
d(u_j,\Lambda_\ell u_j)^2
&\leq
2d(u_j,\widehat \Lambda_\ell u_j)^2
 + 2d(\widehat \Lambda_\ell u_j, \Lambda_\ell u_j)^2
\\
&\leq
2d(u_j,\widehat \Lambda_\ell u_j)^2
+
2 \Cdrel^2 \mu_\ell(\T_\ell\setminus\widehat\T_\ell)^2\\
&\quad+
C\rho(h_0)^2 (d(u_j,\Lambda_\ell u_j)+ d(u_j,\widehat\Lambda_\ell
u_j))^2.
\end{split}
\end{equation*}
Provided the initial mesh-size is sufficiently small,
this leads to some constant $C_2$ such that with
\eqref{e:step2} it follows
\begin{equation*}
\sum_{j\in J}
d(u_j,\Lambda_\ell u_j)^2
\leq
C_2 \varepsilon(\ell)^2
+
C_2 \Cdrel^2 \sum_{j\in J}\mu_\ell(u_j,\T_\ell\setminus\widehat\T_\ell)^2.
\end{equation*}

Let $C_{\mathrm{eq}}$ denote the constant of
$C_2\xi_\ell^2 \leq C_{\mathrm{eq}} \sum_{j\in J} \mu_\ell(u_j,\T_\ell)^2$
(which exists by reliability).
The efficiency \eqref{e:efficiency}, the definition
of $\varepsilon(\ell)$, and the preceding estimates prove
\begin{equation*}
 \begin{aligned}
 C_{\mathrm{eff}}^{-2}
   \sum_{j\in J} \mu_\ell(u_j,\T_\ell)^2
 &
 \leq
 C_2 \varepsilon(\ell)^2
+
C_2 \Cdrel^2 \sum_{j\in J}\mu_\ell(u_j,\T_\ell\setminus\widehat\T_\ell)^2
 \\
 &
 \leq
    \tau C_{\mathrm{eq}}
    \sum_{j\in J}\mu_\ell(u_j,\T_\ell)^2
      + C_2\Cdrel^2 
         \sum_{j\in J} \mu_\ell(u_j,\T_\ell\setminus\widehat\T_\ell)^2
.
 \end{aligned}
\end{equation*}
Defining
$C_1 = 
  (C_{\mathrm{eff}}^{-2}-\tau C_{\mathrm{eq}})^{-1}
  C_2\Cdrel^2 
$,
which is positive for a sufficiently small choice of $\tau$, we
obtain~\eqref{eq:keyargument}.

In order to conclude the proof, we now make the following choice for the
parameter $\theta$:
\[
0<\theta\leq 1\left/ \left(C_1 (\B/\A)^2 (2\N^2+4\N^3)\right)\right..
\]

The marking step in the adaptive algorithm selects
$\mathcal{M}_\ell \subseteq \T_\ell$ with minimal
cardinality such that 
\[
\theta\sum_{j\in J}\eta_{\ell,j}(\T_\ell)^2
       \leq\sum_{j\in J}\eta_{\ell,j}(\mathcal M_\ell)^2.
\]
Estimate~\eqref{eq:keyargument} and the definition of $\theta$
imply together with Lemma~\ref{l:estcomparison}
that also $\T_\ell\setminus\widehat\T_\ell$
satisfies the bulk criterion, that is
\[
\theta\sum_{j\in J}\eta_{\ell,j}(\T_\ell)^2\le
\sum_{j\in J}\eta_{\ell,j}(\T_\ell\setminus\widehat\T_\ell)^2.
\]
The minimality of $\mathcal{M}_\ell$ and
\eqref{e:OptEst1} show that
\begin{equation}\label{e:minimalMl}
  \card(\mathcal M_\ell)
  \leq
  \card(\T_\ell\setminus\widehat\T_\ell)
  \leq
  2 |W|_{\mathcal A_s}^{1/s}
         \tau^{-1/(2s)} \xi_\ell^{-1/s}.
\end{equation}
It is proved in~\cite{BDD,stevenson2008}
that there exists a constant $C_{\mathrm{BDV}}$ such that
\begin{equation*}
 \begin{aligned}
  \card(\T_\ell)
   - \card(\T_0)
   &\leq
   C_{\mathrm{BDV}}
    \sum_{k=0}^{\ell-1} \card(\mathcal M_k)
   \\
   &\leq
   2C_{\mathrm{BDV}}   
   |W|_{\mathcal A_s}^{1/s}  
         \tau^{-1/(2s)} 
    \sum_{k=0}^{\ell-1}\xi_k^{-1/s}.
 \end{aligned}
\end{equation*}
The contraction property from 
Proposition~\ref{p:contraction} implies
$\xi_\ell^2 \leq \rho_2^{\ell-k} \xi_k^2$ for 
$k=0,\dots,\ell$.
Since $\rho_2<1$, a geometric series argument leads to
\begin{equation*}
 \sum_{k=0}^{\ell-1} \xi_k ^{-1/s}
 \leq
 \xi_\ell ^{-1/s} 
   \sum_{k=0}^{\ell-1} \rho_2^{(\ell-k)/(2s)}
 \leq
 \xi_\ell ^{-1/s} 
       \rho_2^{1/(2s)} \left/ \left(1-\rho_2^{1/(2s)}\right)\right..
\end{equation*}
The combination of the above estimates results in
\begin{equation*}
 \begin{aligned}
   &
     \card(\T_\ell)
    - \card(\T_0) 
   \\
   & \qquad\qquad
   \leq
      2C_{\mathrm{BDV}}   
      |W|_{\mathcal A_s}^{1/s} 
         \tau^{-1/(2s)}  \xi_\ell ^{-1/s} 
       \rho_2^{1/(2s)} \left/ \left(1-\rho_2^{1/(2s)}\right)\right..
 \end{aligned}
\end{equation*}
The equivalence of $\xi_\ell^2$ with the error 
 $\sum_{j\in J} d(u_j,\Lambda_\ell u_j)^2$ (reliability and efficiency, see
Section~\ref{se:lemmas}) concludes the proof.

\end{proof}

\section{Convergence of eigenvalues}
\label{se:eig}

The previous analysis shows that the adaptive procedure leads to the
convergence of the quantity $\delta(W,W_\ell)$ which is related to the
eigenfunctions belonging to the cluster.
In this section we show that this estimate actually implies the optimal
convergence of the eigenvalues.

The next discussion has been inspired by~\cite{dnr2}. However, we do not make
use explicitly of the spectral projections and follow a somehow more natural
argument (at least for symmetric problems).

As usual, we consider the eigenvalues $\mu_i=1/\lambda_i$ ($i=1,\dots$)
of $T$ and $\mu_{\ell,i}=1/\lambda_{\ell,i}$ ($i=1,\dots,\dim(M_\ell)$) of
$T_\ell$ and
discuss the convergence of $\mu_{\ell,j}$ to $\mu_j$ for $j\in J$. This standard
notation conflicts with our theoretical error indicator; nevertheless, we
believe that this overlap is not a source of confusion, since it is limited to
this section where the error indicator is not mentioned.

Let $E:\HH\to\HH$ denote the $\HH$ projection onto $W$ and
$E_\ell:\HH\to\HH$ the $\HH$ projection onto $W_\ell$.
We denote by $F_\ell$ the restriction of $E_\ell$ to $W$
\[
F_\ell=E_\ell|_W.
\]
The following proposition shows that for $\ell$ large enough the operator
$F_\ell$
is a bijection from $W$ to $W_\ell$ (which have the same dimension $\N$).

\begin{proposition}
\label{pr:Fh}
For $\ell$ large enough the operator $F_\ell$ is injective. Moreover,
$F^{-1}_\ell$ is
uniformly bounded in $\mathcal{L}(W_\ell,W)$ and
\[
\sup_{\substack{x\in W_\ell\\\|x\|_{\H}=1}}\|F^{-1}_\ell x-x\|_{\H}\le C\delta(W,W_\ell).
\]
\end{proposition}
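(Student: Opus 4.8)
The plan is to drop the gradient term in the metric $d$ and argue entirely with the $\H$-norm, since the assertion only involves $\H$-norms. The one elementary fact needed is that, for $u\in W$, the $\H$-orthogonal projection $E_\ell$ furnishes the best $\H$-approximation in $W_\ell$; combined with $d(v,w)\ge\|v-w\|_\H$ and the homogeneity of $d$ this gives
\[
\|u-E_\ell u\|_\H=\inf_{v_h\in W_\ell}\|u-v_h\|_\H\le\inf_{v_h\in W_\ell}d(u,v_h)\le\delta(W,W_\ell)\,\|u\|_\H .
\]
Write $\delta=\delta(W,W_\ell)$; since the adaptive scheme converges we have $\delta\to0$, so for $\ell$ large we may assume $\delta\le1/2$.

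First I would prove the two-sided bound $\sqrt{1-\delta^2}\,\|u\|_\H\le\|F_\ell u\|_\H\le\|u\|_\H$ valid for all $u\in W$. The upper bound holds because $E_\ell$ is a projection; for the lower bound, Pythagoras together with the displayed estimate gives $\|F_\ell u\|_\H^2=\|E_\ell u\|_\H^2=\|u\|_\H^2-\|u-E_\ell u\|_\H^2\ge(1-\delta^2)\|u\|_\H^2$. Hence $F_\ell$ has trivial kernel as soon as $\delta<1$, and since $\dim W=\dim W_\ell=\N$ this already makes $F_\ell$ a bijection; the lower bound rephrases as $\|F_\ell^{-1}\|_{\mathcal{L}(W_\ell,W)}\le(1-\delta^2)^{-1/2}\le2/\sqrt3$, a bound independent of $\ell$.

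For the last estimate I would take $x\in W_\ell$ with $\|x\|_\H=1$, put $u=F_\ell^{-1}x\in W$, and observe $E_\ell u=F_\ell u=x$. Applying the displayed estimate to this $u$,
\[
\|F_\ell^{-1}x-x\|_\H=\|u-E_\ell u\|_\H\le\delta\,\|u\|_\H=\delta\,\|F_\ell^{-1}x\|_\H\le\delta\,(1-\delta^2)^{-1/2}\le C\,\delta(W,W_\ell),
\]
with $C$ depending only on the fixed upper bound on $\delta$; taking the supremum over such $x$ finishes the proof.

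I do not expect a real obstacle. The only points requiring a bit of care are that the $|\G(\cdot)-\G_h(\cdot)|_a$ contribution to $d$ may simply be discarded (it only makes $d$ larger), that the $\H$-orthogonal projection onto $W_\ell$ attains the infimum of the $\H$-distance to $W_\ell$, and that the equality $\dim W=\dim W_\ell$ is what turns injectivity into bijectivity and yields the uniform bound on the inverse.
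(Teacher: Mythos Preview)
Your proof is correct and follows essentially the same approach as the paper: both establish the key inequality $\|F_\ell y - y\|_\H \le \delta(W,W_\ell)\|y\|_\H$ for $y\in W$ via the best-approximation property of the projection $E_\ell$, and then deduce injectivity, the uniform bound on $F_\ell^{-1}$, and the final estimate from it. The paper is terser---it just records that $\|F_\ell y - y\|_\H\le(1/2)\|y\|_\H$ for $\ell$ large and cites an external lemma for the standard consequences---whereas you spell these out explicitly and use Pythagoras (yielding the slightly sharper constant $(1-\delta^2)^{-1/2}$) in place of the reverse triangle inequality.
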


\begin{proof}

It is enough to show that for $\ell$ sufficiently large
$\|F_\ell y-y\|_{\H}\le(1/2)\|y\|_{\H}$ for all $y\in W$ (see also~\cite[Lemma~2]{dnr2}).
Indeed, from the definition of $F_\ell$ it is immediate to get
\[
\|F_\ell y-y\|_{\H}\le\|y-y_\ell\|_{\H}\quad\forall y_\ell\in W_\ell
\]
which implies
\[
\|F_\ell y-y\|_{\H}\le\delta(W,W_\ell)\|y\|_{\H}.
\]
We can then conclude our proof from Theorem~\ref{th:main} observing that
$\delta(W,W_\ell)$ tends to zero.

\end{proof}

Let us define the following operators from $W$ into itself:
\[
\hat T=T|_W,\quad \hat T_\ell=F_\ell^{-1} T_\ell F_\ell.
\]
It is clear that the eigenvalues of $\hat T$ ($\hat T_\ell$, resp.) are equal to
$\mu_j$ ($\mu_{\ell,j}$ resp.), $j\in J$.

\begin{lemma}

The following estimates hold true for all $x\in W$
\begin{equation}
\aligned
\|(T-T_\ell)x\|_{\H}&\le C\delta(W,W_\ell),\\
|(A-A_\ell)x|_a&\le C\delta(W,W_\ell),\\
\|(A-A_\ell)x\|_\Sigma&\le C\delta(W,W_\ell).
\endaligned
\label{eq:trio}
\end{equation}
\label{le:bbf}
\end{lemma}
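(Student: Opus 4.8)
The plan is to prove each of the three estimates in \eqref{eq:trio} by reducing them to the already-established bound on $\delta(W,W_\ell)$ and to the a~priori theory for the mixed source problem. First I would fix $x\in W$ with $\|x\|_\H=1$ and observe that, since $W$ is the span of the continuous eigenfunctions $u_j$ with eigenvalues $\lambda_j=1/\mu_j$, the quantity $Tx$ is a fixed combination of the $u_j$, and in particular $Tx\in W$ with $\|Tx\|_\H\le \max_{j\in J}\mu_j\,\|x\|_\H$. The key point is to relate $T_\ell x$ for $x\in W$ (note $x\notin M_\ell$ in general) to the discrete cluster. I would use the fact that, by the a~priori theory summarized in Section~\ref{se:setting}, $\|(T-T_\ell)g\|_\H\lesssim \rho(h)\,\|g\|_\H$ for all $g\in\H$; this already gives convergence but not the rate $\delta(W,W_\ell)$ directly. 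The right approach is instead to compare $T_\ell x$ with $\Lambda_\ell x$ and exploit Lemma~\ref{l:commuting}.

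The core computation runs as follows. For $x=\sum_{j\in J}c_j u_j\in W$ we have $Tx=\sum_{j\in J}c_j\mu_j u_j$, and $\Lambda_\ell u_j = P^W_\ell(\lambda_j T_\ell u_j) = \lambda_j P^W_\ell T_\ell u_j$, so $\mu_j\Lambda_\ell u_j = P^W_\ell T_\ell u_j$. Hence
\begin{equation*}
P^W_\ell T_\ell x - Tx = \sum_{j\in J} c_j\bigl(\mu_j\Lambda_\ell u_j - \mu_j u_j\bigr) = \sum_{j\in J} c_j\mu_j\bigl(\Lambda_\ell u_j - u_j\bigr),
\end{equation*}
and the $\H$-norm of this is $\lesssim \bigl(\sum_{j\in J}\|\Lambda_\ell u_j-u_j\|^2\bigr)^{1/2}\lesssim \delta(W,W_\ell)$ using $|c_j|\le 1$ and $\mu_j\le 1/\A$. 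It then remains to control $(I-P^W_\ell)T_\ell x$. For this I would use that $T_\ell x\in M_\ell$ and that $W_\ell$ is spanned by eigenfunctions $u_{\ell,j}$, $j\in J$; writing $T_\ell x=\sum_i \mu_{\ell,i}(x,u_{\ell,i})_\H u_{\ell,i}$ one sees $(I-P^W_\ell)T_\ell x=\sum_{i\notin J}\mu_{\ell,i}(x,u_{\ell,i})_\H u_{\ell,i}$, whose norm is controlled by $\max_{i\notin J}\mu_{\ell,i}\cdot\|(I-P^W_\ell)x\|_\H$. Since $x\in W$, $\|(I-P^W_\ell)x\|_\H\le\operatorname{dist}_\H(x,W_\ell)\le\delta(W,W_\ell)$ by Definition~\ref{def:gap} (the $\H$-part of the metric $d$), and $\mu_{\ell,i}$ is bounded uniformly (away from the cluster by the a~priori convergence, for $i$ in a neighborhood of the cluster by a separation argument; here we use that the cluster is resolved for $\ell$ large, as in Proposition~\ref{pr:Fh}). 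Combining the two pieces gives the first estimate in \eqref{eq:trio}.

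For the second and third estimates I would argue similarly, now using the operators $A$ and $A_\ell$ together with the map $\G$: by \eqref{eq:defTA}--\eqref{eq:defTAh}, $Ax=\G(Tx)$ and $A_\ell(\cdot)=\G_\ell(T_\ell\,\cdot)$ on $M_\ell$, up to the sign conventions, so that $|(A-A_\ell)x|_a$ is exactly the $|\G(\cdot)-\G_\ell(\cdot)|_a$-part of the metric $d$ evaluated between $Tx$ and $T_\ell x$. Repeating the splitting above with $d(\cdot,\cdot)$ in place of $\|\cdot\|_\H$, and using $d(u_j,\Lambda_\ell u_j)\le\delta(W,W_\ell)$ together with the stability of $\G_\ell$ (continuity of the discrete source solution map, which is part of the standing mixed-FEM assumptions), gives $|(A-A_\ell)x|_a\lesssim\delta(W,W_\ell)$. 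The $\Sigma=H(\ddiv)$-norm estimate then follows because $\ddiv A x = -\lambda$-weighted combination of the $u_j$ (from the second equation of \eqref{eq:defTA}) and correspondingly $\ddiv A_\ell x$ is the projection of the same data, so $\|\ddiv(A-A_\ell)x\|\lesssim\|(T-T_\ell)x\|_\H+\delta(W,W_\ell)\lesssim\delta(W,W_\ell)$ by the first estimate and \eqref{eq:sparita}; adding the two bounds yields the $\Sigma$-norm estimate.

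The main obstacle I expect is the uniform boundedness of the relevant discrete eigenvalues $\mu_{\ell,i}$ in the step controlling $(I-P^W_\ell)T_\ell x$: one must ensure that no spurious discrete eigenvalue approaches zero (equivalently, no $\lambda_{\ell,i}\to\infty$ is relevant here — boundedness of $\mu_{\ell,i}$ is automatic since $\mu_{\ell,i}\le\|T_\ell\|\le C$) and, more delicately, that the discrete eigenvalues \emph{outside} the cluster stay uniformly separated from it, so that the decomposition of $T_\ell x$ into the $W_\ell$-part and its complement is stable. This is exactly the kind of spectral-gap information that is available from the a~priori theory of Section~\ref{se:setting} once the initial mesh is fine enough, and it is the same hypothesis underlying Proposition~\ref{pr:Fh}; once it is in hand, the rest of the argument is the bookkeeping sketched above.
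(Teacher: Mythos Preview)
Your approach has a genuine gap: you repeatedly invoke the bound
$d(u_j,\Lambda_\ell u_j)\lesssim\delta(W,W_\ell)$
(and its $\H$-part $\|u_j-\Lambda_\ell u_j\|\lesssim\delta(W,W_\ell)$), but this
inequality does not follow from Definition~\ref{def:gap}. The gap
$\delta(W,W_\ell)=\sup_{u}\inf_{v_\ell\in W_\ell}d(u,v_\ell)$ only controls the
\emph{best} approximation in $W_\ell$, so for the specific choice
$v_\ell=\Lambda_\ell u_j$ the inequality goes the wrong way. To justify it you
would need a quasi-optimality result for $\Lambda_\ell$, and unwinding this
(via $\|u_j-\Pi_\ell u_j\|$, Proposition~\ref{p:hot}, and
$\|\sigma_j-\G_\ell(\Tll u_j)\|_\Sigma$) leads you straight back to the
Galerkin best-approximation identity for $\G_\ell(\Tll u_j)$---which is
precisely what the paper proves directly. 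Without that step your argument for
the first term $P^W_\ell T_\ell x-Tx$ is circular: bounding
$\|u_j-\Lambda_\ell u_j\|$ by the triangle inequality through $P^W_\ell u_j$
reduces to $\|u_j-\Tll u_j\|=\lambda_j\|(T-T_\ell)u_j\|$, the very quantity to
be estimated. Your treatment of the complementary piece
$(I-P^W_\ell)T_\ell x$ is fine.

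By contrast, the paper bypasses $\Lambda_\ell$ completely and works with
$\Tll$. It proves estimate two first, showing by Galerkin orthogonality that
$a(\sigma_j-\G_\ell(\Tll u_j),\G_\ell(v_\ell)-\G_\ell(\Tll u_j))=0$, whence
$|\sigma_j-\G_\ell(\Tll u_j)|_a\le\inf_{v_\ell\in M_\ell}
|\sigma_j-\G_\ell(v_\ell)|_a\le\delta(W,W_\ell)$; estimate one is then obtained
by a duality argument (an auxiliary mixed problem with data $u-u_\ell$) that
reduces to estimate two, and estimate three by the standard bound
$\|(A-A_\ell)x\|_\Sigma\lesssim|(A-A_\ell)x|_a+\|x-\Pi_\ell x\|$ from
\cite[Prop.~4.3.4]{BoffiBrezziFortin2013}. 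Also note a small slip in your
estimate three: $\ddiv Ax=-x$ (not a $\lambda$-weighted sum), so
$\|\ddiv(A-A_\ell)x\|=\|x-\Pi_\ell x\|\le\delta(W,W_\ell)$ directly, without
passing through $\|(T-T_\ell)x\|$.
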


\begin{proof}

Let us denote $u=Tx$, $u_\ell=T_\ell x$, $\sigma=\G(u)=Ax$, and
$\sigma_\ell=\G_\ell(u_\ell)=A_\ell x$.

In order to prove the first estimate, we use a standard duality argument
and introduce the following auxiliary problem: find $\zeta\in\Sigma$ and
$w\in M$ such that
\[
\left\{
\aligned
&   a(\zeta,\tau) + b(\tau,w)   = 0 &&\forall\tau\in\Sigma\\
&   b(\zeta,v)  = - (u-u_\ell,v)_{\H} &&\forall v\in M.
\endaligned
\right.
\]
We clearly have $\|\zeta\|_\Sigma+\|w\|_M\le C\|u-u_\ell\|_{\H}$. By standard
arguments we get
\begin{equation}
\aligned
\|u-u_\ell\|^2_{\H}&=(u-u_\ell,u-u_\ell)_{\H}=-b(\zeta,u-u_\ell)\\
&=-b(\zeta-\Pi_{F,\ell}\zeta,u)-b(\Pi_{F,\ell}\zeta,u-u_\ell)\\
&=a(\sigma,\zeta-\Pi_{F,\ell}\zeta)+a(\G(u)-\G_\ell(u_\ell),\Pi_{F,\ell}\zeta).
\endaligned
\label{eq:stimaT}
\end{equation}
For all $v_\ell\in M_\ell$, the first term can be estimated as follows:
\[
\aligned
|a(\sigma,\zeta-\Pi_{F,\ell}\zeta)|&=
|a(\sigma-\G_\ell(v_\ell),\zeta-\Pi_{F,\ell}\zeta)+
a(\G_\ell(v_\ell),\zeta-\Pi_{F,\ell}\zeta)|\\
&=|a(\sigma-\G_\ell(v_\ell),\zeta-\Pi_{F,\ell}\zeta)-
b(\zeta-\Pi_{F,\ell}\zeta,v_\ell)|\\
&=|a(\sigma-\G_\ell(v_\ell),\zeta-\Pi_{F,\ell}\zeta)|\\
&\le C|\sigma-\G_\ell(v_\ell)|_a\|\zeta\|_\Sigma
\le C|\sigma-\G_\ell(v_\ell)|_a\|u-u_\ell\|_{\H}.
\endaligned
\]

The second term in the last line of~\eqref{eq:stimaT} can be estimated as
follows:
\[
|a(\G(u)-\G_\ell(u_\ell),\Pi_{F,\ell}\zeta)|\le C
|\G(u)-\G_\ell(u_\ell)|_a\|\zeta\|_\Sigma\le
C|(A-A_\ell)x|_a\|u-u_{\ell}\|_{\H}.
\]

Hence
\[
\|Tx-T_\ell x\|_{\H}\le C\left(|\sigma-\G_\ell(v_\ell)|_a+|(A-A_\ell)x|_a\right)\quad
\forall v_\ell\in M_\ell.
\]
Since the first term is bounded by $\delta(W,W_\ell)$, the final estimate will
follow from the second estimate in~\eqref{eq:trio}.

Let us prove the second estimate in~\eqref{eq:trio}.

From the definition of $W$ we have
\[
x=\sum_{j\in J}\alpha_j u_j,
\]
where we recall that $(\lambda_j,\sigma_j,u_j)$ is the generic eigensolution
belonging to the cluster $W$ and the coefficients are given by
$\alpha_j=(x,u_j)$.

Hence, $Ax=\G(u)$ with $u=Tx$ and
\[
Ax=\sum_{j\in J}\frac{1}{\lambda_j}\alpha_j\sigma_j.
\]

Analogously, from~\eqref{e:discreteSource},
\[
A_\ell x=\sum_{j\in J}\frac{1}{\lambda_j}\alpha_j\G_\ell(\Tll u_j).
\]

We then obtain
\[
|Ax-A_\ell x|_a=\left\vert\sum_{j\in J}\frac{1}{\lambda_j}\alpha_j
(\sigma_j-\G_\ell(\Tll u_j))\right\vert_a.
\]
We now show that $|\sigma_j-\G_\ell(\Tll u_j)|_a$ can be bounded by
$\delta(W,W_\ell)$. For all $v_\ell\in M_\ell$ we have
\[
\aligned
|\sigma_j-\G_\ell(\Tll u_j)|_a^2&=
a(\sigma_j-\G_\ell(\Tll u_j),\sigma_j-\G_\ell(\Tll u_j))\\
&=a(\sigma_j-\G_\ell(\Tll u_j),\sigma_j-\G_\ell(v_\ell))\\
&\quad+a(\sigma_j-\G_\ell(\Tll u_j),\G_\ell(v_\ell)-\G_\ell(\Tll u_j)).
\endaligned
\]
Since the last term is vanishing for the properties of $\sigma_j$ and the
definitions of $\Tll$ and $\G_\ell$, we obtain
\[
|\sigma_j-\G_\ell(\Tll u_j)|_a\le
\inf_{v_\ell\in M_\ell}|\sigma_j-\G_\ell(v_\ell)|_a\le
C\delta(W,W_\ell).
\]

From~\cite[Prop.~4.3.4]{BoffiBrezziFortin2013} and the definitions of $A$ and
$A_\ell$ it follows that
\[
\|(A-A_\ell)x\|_\Sigma\le C|(A-A_\ell)x|_a+C\|x-x_\ell\|_{\H},
\]
where $x_\ell\in M_\ell$ is the $\HH$ projection of $x$. The first term is
readily
bounded by $\delta(W,W_\ell)$, while the second one is smaller than
$\|x-F_\ell x\|_{\H}$
which has been already estimated in the proof of Proposition~\ref{pr:Fh}.

\end{proof}

The following proposition is a crucial
step for the bound of the eigenvalues.

\begin{proposition}

The following estimate holds true
\begin{equation}
\|\hat T-\hat T_\ell\|_{\mathcal{L}(W)}\le C\delta(W,W_\ell)^2
\label{eq:hatT}
\end{equation}

\end{proposition}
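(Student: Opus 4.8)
The key structural fact is that $\hat T_\ell = F_\ell^{-1} T_\ell F_\ell$ is a conjugate of $T_\ell$, so estimating $\|\hat T - \hat T_\ell\|_{\mathcal L(W)}$ directly would only give the first-order bound $\delta(W,W_\ell)$ from Lemma~\ref{le:bbf}. To gain the square, I would exploit symmetry: both $\hat T$ and the symmetrization of $\hat T_\ell$ are self-adjoint on the finite-dimensional space $W$ equipped with the $\H$-inner product, and for self-adjoint operators one can estimate the operator norm through a bilinear form $x\mapsto ((\hat T-\hat T_\ell)x,x)_\H$. The plan is therefore: (i) write, for $x\in W$ with $\|x\|_\H=1$, the quantity $((\hat T-\hat T_\ell)x,x)_\H$ and insert the definition $\hat T_\ell x = F_\ell^{-1}T_\ell F_\ell x$; (ii) use that $F_\ell = E_\ell|_W$ is an $\H$-orthogonal projection restricted to $W$, so that $(F_\ell^{-1}z,x)_\H$ can be rewritten by moving $E_\ell$ onto $x$ and controlling the commutator $F_\ell^{-1}-I$ via Proposition~\ref{pr:Fh}; (iii) reduce everything to $((T-T_\ell)x,x)_\H$ plus quadratic remainder terms.

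The heart of the matter is the estimate $|((T-T_\ell)x, x)_\H|\lesssim \delta(W,W_\ell)^2$ for $x\in W$, i.e.\ the Galerkin-orthogonality / Aubin--Nitsche mechanism adapted to the mixed setting. Here I would proceed as in the proof of the first estimate of Lemma~\ref{le:bbf} but keeping the test function equal to (a combination of eigenfunctions in) $W$ rather than using a generic dual problem: with $u=Tx$, $u_\ell=T_\ell x$, $\sigma=Ax$, $\sigma_\ell=A_\ell x$, one has $((T-T_\ell)x,x)_\H = (u-u_\ell,x)_\H$, and using the mixed equations \eqref{eq:defTA}--\eqref{eq:defTAh} together with the Fortin operator $\Pi_{F,\ell}$ one rewrites this as a sum of products of the form $a(\sigma-\G_\ell(v_\ell),\,\sigma-\G_\ell(w_\ell))$ plus $(u-u_\ell)$-terms tested against quantities that are themselves $O(\delta(W,W_\ell))$. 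Since $x\in W$, the ``dual solution'' is again in $W$ (it is built from the same eigenfunctions), so both factors in each bilinear product are bounded by $\delta(W,W_\ell)$ by the computations already carried out in Lemma~\ref{le:bbf}; this yields the square.

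The main obstacle I anticipate is the bookkeeping in step (ii): $\hat T_\ell$ is \emph{not} self-adjoint on $(W,(\cdot,\cdot)_\H)$ because $F_\ell^{-1}\neq F_\ell^\star$ in general, so one must show that the non-self-adjoint part contributes only at order $\delta(W,W_\ell)^2$. Concretely, writing $F_\ell^{-1} = I + (F_\ell^{-1}-I)$ and noting $\|(F_\ell^{-1}-I)|_{W_\ell}\|\lesssim\delta(W,W_\ell)$ by Proposition~\ref{pr:Fh}, while $\|(T-T_\ell)F_\ell\| \lesssim \|T-T_\ell\|_{\mathcal L(W)} + \|T_\ell(F_\ell-I)\|\lesssim\delta(W,W_\ell)$ by Lemma~\ref{le:bbf}, the cross terms $(F_\ell^{-1}-I)T_\ell F_\ell$ and $T_\ell(F_\ell-I)$ are each products of two $O(\delta)$ factors. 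What remains after subtracting these is exactly $(T-T_\ell)$ restricted to $W$ tested symmetrically, which is handled by the Aubin--Nitsche argument of the previous paragraph. Collecting the three groups of terms — the symmetric $((T-T_\ell)x,x)_\H$ term and the two conjugation-defect terms — and invoking the self-adjointness of $\hat T$ and of the symmetric part of $\hat T_\ell$ to pass from the quadratic form back to the operator norm, gives \eqref{eq:hatT}.
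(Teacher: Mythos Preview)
Your core idea — that $((T-T_\ell)x,y)_\H = O(\delta(W,W_\ell)^2)$ for $x,y\in W$ via a Galerkin-orthogonality / Aubin--Nitsche mechanism adapted to the mixed structure — is correct and coincides with what the paper does. The paper writes, for $x,y\in W$ with unit norm,
\[
((T-T_\ell)x,y)_\H
= -b((A-A_\ell)y,(T-T_\ell)x) + a((A-A_\ell)x,A_\ell y),
\]
and then expands the second term to exhibit it as a sum of products of two first-order quantities, exactly along the lines you sketch.

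There is, however, a genuine gap in your treatment of the conjugation defect. You claim that the cross terms $(F_\ell^{-1}-I)T_\ell F_\ell$ and $T_\ell(F_\ell-I)$ are ``products of two $O(\delta)$ factors'', but in operator norm on $W$ they are not: in each case one factor is $O(1)$ and the other $O(\delta)$, so each term is only $O(\delta)$. (Indeed, their sum equals $\hat T_\ell - T_\ell = -\mathcal S_\ell(T-T_\ell)$ with $\mathcal S_\ell = F_\ell^{-1}E_\ell - I$, and this has operator norm $O(\delta)$, not $O(\delta^2)$.) The detour through self-adjointness does not rescue the argument unless you supply the missing ingredient.

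What actually buys the extra factor of $\delta$ is an orthogonality you have not identified: since $E_\ell F_\ell^{-1} E_\ell = E_\ell$, one has $E_\ell\mathcal S_\ell = 0$, i.e.\ the range of $\mathcal S_\ell$ is orthogonal to $W_\ell$. Hence for any $y\in W$,
\[
(\mathcal S_\ell(T-T_\ell)x,\,y)_\H
= (\mathcal S_\ell(T-T_\ell)x,\,y - F_\ell y)_\H
\le \|\mathcal S_\ell\|\,\|(T-T_\ell)x\|\,\|y-F_\ell y\|
\lesssim \delta^2.
\]
The paper exploits this directly: it uses the clean identity $(\hat T-\hat T_\ell)x = (I+\mathcal S_\ell)(T-T_\ell)x$ (which follows from the commutation $E_\ell T_\ell = T_\ell E_\ell$ on $M_\ell$), tests against an \emph{arbitrary} $y\in W$, and bounds both pieces by $\delta^2$ as above. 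This avoids any appeal to self-adjointness of $\hat T_\ell$ and is shorter than the symmetrization route you propose. Your plan becomes correct once you insert this orthogonality argument in place of the erroneous ``two $O(\delta)$ factors'' claim; but as written, step~(ii) does not go through.
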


\begin{proof}

Let us define $\mathcal{S}_\ell=F_\ell^{-1}E_\ell-I: \H\to\H$. From the
boundedness of the involved operators, it is immediate to observe that
$\mathcal{S}_\ell$
is
uniformly bounded.

For all $x\in W$ we have
\begin{equation}
(\hat T-\hat T_\ell)x=(T-T_\ell)x+\mathcal{S}_\ell(T-T_\ell)x
\label{eq:duepez}
\end{equation}
since $E_\ell\mathcal{S}_\ell=0$. Let us estimate the first term. For all $x,y\in
W$ with $\|x\|_{\H}=\|y\|_{\H}=1$
\[
\aligned
((T-T_\ell)x,y)_{\H}&=-b(Ay,(T-T_\ell)x)+a((A-A_\ell)x,A_\ell y)+b(A_\ell
y,(T-T_\ell)x)\\
&=-b((A-A_\ell)y,(T-T_\ell)x)+a((A-A_\ell)x,A_\ell y).
\endaligned
\]

The first term is bounded by a constant times $\delta(W,W_\ell)^2$, while the
second term can be estimated as follows.
\[
\aligned
a((A-A_\ell)x,A_\ell y)&=a((A-A_\ell)x,(A_\ell-A)y)+a((A-A_\ell)x,Ay)\\
&=a((A-A_\ell)x,(A_\ell-A)y)-b((A-A_\ell)x,Ty)\\
&=a((A-A_\ell)x,(A_\ell-A)y)-b((A-A_\ell)x,(T-T_\ell)y)\\
&\le C\delta(W,W_\ell)^2.
\endaligned
\]

The second term in~\eqref{eq:duepez} can be estimated using the following
identity
\[
(\mathcal{S}_\ell(T-T_\ell)x,y)_{\H}=(\mathcal{S}_\ell(T-T_\ell)x,y-F_\ell y)_{\H}
\]
which finally leads to
\[
|(\mathcal{S}_\ell(T-T_\ell)x,y-E_\ell
y)_{\H}|\le\|\mathcal{S}_\ell\|_{\mathcal{L}(\H)}\|T-T_\ell\|_{\mathcal{L}(\H)}\|I-F_\ell\|_{\mathcal{L}(\H)}.
\]

\end{proof}

The operators $\hat T$ and $\hat T_\ell$ can be represented by symmetric positive
definite matrices of dimension $\N\times \N$ ($\N$ being the dimension of $W$).
The following theorem is then a standard consequence of matrix perturbation
theory (see, for instance,~\cite[Theorem~3, items c) and d)]{dnr2}) and to the
equivalences $\lambda_i=1/\mu_i$ and $\lambda_{\ell,i}=1/\mu_{\ell,i}$.

\begin{theorem}

Let $J$ denote the set of indices corresponding to the eigenvalues in the
cluster $W$. Then
\[
\sup_{i\in J}\inf_{j\in J}|\lambda_i-\lambda_{\ell,j}|\le C\delta(W,W_\ell)^2.
\]
\end{theorem}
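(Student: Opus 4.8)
The plan is to reduce the claimed eigenvalue estimate to the operator bound \eqref{eq:hatT} already obtained, namely $\|\hat T-\hat T_\ell\|_{\mathcal L(W)}\le C\,\delta(W,W_\ell)^2$, and then invoke standard matrix perturbation theory for the two symmetric positive definite operators $\hat T,\hat T_\ell\colon W\to W$ on the $\N$-dimensional space $W$. First I would recall that, by construction, the eigenvalues of $\hat T$ are exactly $\{\mu_i=1/\lambda_i : i\in J\}$ and those of $\hat T_\ell$ are exactly $\{\mu_{\ell,j}=1/\lambda_{\ell,j}: j\in J\}$, since $\hat T_\ell=F_\ell^{-1}T_\ell F_\ell$ is similar to the restriction of $T_\ell$ to $W_\ell$ and $F_\ell$ is a bijection for $\ell$ large by Proposition~\ref{pr:Fh}. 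Symmetry of $\hat T$ is inherited from $T$; symmetry of $\hat T_\ell$ (with respect to the $\H$-inner product on $W$) must be noted as well, or at least the fact that its eigenvalues are real and that the Bauer--Fike / Weyl-type bounds apply — this is exactly the content of \cite[Theorem~3]{dnr2} cited in the statement.

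Next I would apply the matrix perturbation result: for two symmetric (self-adjoint) $\N\times\N$ operators, each eigenvalue $\mu_i$ of $\hat T$ has an eigenvalue $\mu_{\ell,j}$ of $\hat T_\ell$ within distance $\|\hat T-\hat T_\ell\|_{\mathcal L(W)}$, i.e.
\[
\sup_{i\in J}\inf_{j\in J}|\mu_i-\mu_{\ell,j}|\le \|\hat T-\hat T_\ell\|_{\mathcal L(W)}\le C\,\delta(W,W_\ell)^2 .
\]
Then I would translate back from $\mu$ to $\lambda$: writing $|\lambda_i-\lambda_{\ell,j}| = \lambda_i\lambda_{\ell,j}\,|\mu_i-\mu_{\ell,j}|$ and using that the eigenvalues in the cluster (continuous and discrete) lie in a fixed bounded interval bounded away from zero — which holds for $\ell$ large enough by the a~priori convergence theory (the discrete eigenvalues converge to the continuous ones, cf.\ \cite{bbg2,Boffi2010}) — the factor $\lambda_i\lambda_{\ell,j}$ is bounded by a constant. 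This yields
\[
\sup_{i\in J}\inf_{j\in J}|\lambda_i-\lambda_{\ell,j}|\le C\,\delta(W,W_\ell)^2 ,
\]
which is the assertion.

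The main obstacle is essentially bookkeeping rather than a genuine difficulty: one must make sure the constant in the perturbation estimate is mesh-independent. This requires (i) the uniform boundedness of $F_\ell^{-1}$ from Proposition~\ref{pr:Fh}, so that $\hat T_\ell$ stays uniformly bounded and its spectrum stays in a fixed compact set away from $0$, and (ii) a clean statement of the symmetric perturbation bound — here one may either verify that $\hat T_\ell$ is self-adjoint with respect to a suitable inner product on $W$ (note $F_\ell$ need not be unitary, so $\hat T_\ell$ is only similar, not congruent, to a symmetric operator) or instead appeal to the fact that $\hat T_\ell$ has the same spectrum as the self-adjoint operator $E_\ell T_\ell E_\ell|_{W_\ell}$ on $W_\ell$ and bound the spectral distance between that operator and $\hat T$ through the chain $T\leadsto T_\ell\leadsto E_\ell T_\ell E_\ell$. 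Either route is short; the key quantitative input, the quadratic bound $\delta(W,W_\ell)^2$, is already supplied by \eqref{eq:hatT}, and Theorem~\ref{th:main} guarantees $\delta(W,W_\ell)\to0$ with the optimal rate, so the eigenvalue error inherits twice that rate.
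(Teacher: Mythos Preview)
Your proposal is correct and follows essentially the same route as the paper: reduce to the operator bound~\eqref{eq:hatT}, apply matrix perturbation theory on the $\N$-dimensional space $W$ (the paper cites \cite[Theorem~3, items c) and d)]{dnr2} for precisely this), and convert from $\mu$ to $\lambda$ via $\lambda_i=1/\mu_i$. Your discussion of the self-adjointness issue for $\hat T_\ell$ is in fact more careful than the paper's, which simply asserts that both operators ``can be represented by symmetric positive definite matrices'' and defers to the cited reference; your observation that $\hat T_\ell$ is only similar, not congruent, to a symmetric operator is well taken, and the cited result in \cite{dnr2} indeed covers this non-selfadjoint situation.
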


\section{Auxiliary results}
\label{se:lemmas}

This section contains all technical results which have been used for the proof
of Theorem~\ref{th:main}. We arrange the presentation in three subsections: in
the first one a superconvergence result is proved; in the
second one we collect the results which hold for all refinements $\T_h$ of
a given mesh $\T_H$; finally, in the third one we include the results which
have been proved for the sequence of meshes $\{\T_\ell\}$ generated by our
adaptive procedure.

\subsection{A superconvergence result and other useful estimates}

Let $\Pi_h$ denote the orthogonal projection onto $M_h$.

\begin{lemma}[Superconvergence for the source
problem]\label{l:superconvSource}
There exist $\rho(h)$ tending to zero as $h$ goes to zero such that
\begin{equation*}
\| \Pi_h u - \Tlh u\|
\lesssim
\rho(h) \|\sigma - \G_h (\Tlh u) \|_{\Sigma}.
\end{equation*}
\end{lemma}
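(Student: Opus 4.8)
\textbf{Proof plan.}
The plan is a standard duality (Aubin--Nitsche) argument, whose only genuinely new ingredient is the use of the commuting-diagram identity $\ddiv(\Sigma_h)=M_h$ from \eqref{eq:sparita} to control the $L^2$-projection error of the scalar variable by the $\Sigma$-norm of the vector error. Write $\sigma_h:=\G_h(\Tlh u)$ and $u_h:=\Tlh u$; by \eqref{e:discreteSource} the pair $(\sigma_h,u_h)$ is exactly the mixed discrete solution of the source problem with datum $f=\lambda u$, while the eigenvalue equations say that $(\sigma,u)$ with $\sigma=\G(u)$ solves the same problem continuously. Set $\psi:=\Pi_h u-u_h\in M_h$; the case $\psi=0$ is trivial, so assume $\psi\neq 0$. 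First I would introduce the dual pair $(\zeta,w)\in\Sigma\times M$ with $a(\zeta,\tau)+b(\tau,w)=0$ for all $\tau\in\Sigma$ and $b(\zeta,v)=-(\psi,v)_\H$ for all $v\in M$; since $\psi\in\H$, the definition of $\Sigma_0$, $M_0$ gives $\|\zeta\|_{\Sigma_0}+\|w\|_{M_0}\lesssim\|\psi\|$.

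The next step is the representation $\|\psi\|^2=a(\sigma-\sigma_h,\fortin\zeta)$. Testing the dual equation with $v=\psi$ gives $\|\psi\|^2=-b(\zeta,\psi)$, and since $\psi\in M_h$ the Fortin-operator property yields $b(\zeta,\psi)=b(\fortin\zeta,\psi)=b(\fortin\zeta,\Pi_h u)-b(\fortin\zeta,u_h)$. Here $\ddiv(\fortin\zeta)\in\ddiv(\Sigma_h)=M_h$ is $L^2$-orthogonal to $u-\Pi_h u$, so $b(\fortin\zeta,\Pi_h u)=b(\fortin\zeta,u)=-a(\sigma,\fortin\zeta)$ by the continuous equation tested with $\tau=\fortin\zeta$, while the discrete equation tested with $\tau_h=\fortin\zeta$ gives $b(\fortin\zeta,u_h)=-a(\sigma_h,\fortin\zeta)$; subtracting proves the claim.

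I would then split $\fortin\zeta=(\fortin\zeta-\zeta)+\zeta$. The first contribution is the superconvergent one: $|a(\sigma-\sigma_h,\fortin\zeta-\zeta)|\le|\sigma-\sigma_h|_a\,|\zeta-\fortin\zeta|_a\le\rho(h)\,\|\zeta\|_{\Sigma_0}\,|\sigma-\sigma_h|_a\lesssim\rho(h)\,\|\sigma-\sigma_h\|_\Sigma\,\|\psi\|$ by the Fortin-operator property. For the second contribution, the dual equation tested with $\tau=\sigma-\sigma_h$ and the symmetry of $a$ give $a(\sigma-\sigma_h,\zeta)=-b(\sigma-\sigma_h,w)$; computing $b(\sigma,w)=(\ddiv\sigma,w)=-\lambda(u,w)$ and, using once more $\ddiv\sigma_h\in M_h$ together with the discrete equation, $b(\sigma_h,w)=(\ddiv\sigma_h,\Pi_h w)=-\lambda(\Pi_h u,w)$, one gets $b(\sigma-\sigma_h,w)=\lambda(\Pi_h u-u,w)=\lambda(\Pi_h u-u,w-\Pi_h w)$ by $L^2$-orthogonality of $u-\Pi_h u$ to $M_h$. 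Hence $|a(\sigma-\sigma_h,\zeta)|\le\lambda\|u-\Pi_h u\|\,\|w-\Pi_h w\|\le\lambda\|u-\Pi_h u\|\,\rho(h)\,\|w\|_{M_0}$ by the strong approximability of $M_0$.

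The one non-routine point, and the reason the $\Sigma$-norm rather than the seminorm $|\cdot|_a$ must appear on the right-hand side, is the bound for $\|u-\Pi_h u\|$: I would observe that $\ddiv\sigma=-\lambda u$ and $\ddiv\sigma_h=-\lambda\Pi_h u$ (the latter because $\ddiv\sigma_h\in M_h$ and it is the $L^2$-representation of $-\lambda u$ on $M_h$), so $\ddiv(\sigma-\sigma_h)=-\lambda(u-\Pi_h u)$ and therefore $\lambda\|u-\Pi_h u\|=\|\ddiv(\sigma-\sigma_h)\|\le\|\sigma-\sigma_h\|_\Sigma$. This makes the second contribution $\lesssim\rho(h)\,\|\sigma-\sigma_h\|_\Sigma\,\|\psi\|$ as well. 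Collecting the two parts and calling $\rho(h)$ the maximum of the rate functions from the Fortin-operator and strong-approximability assumptions, one arrives at $\|\psi\|^2\lesssim\rho(h)\,\|\sigma-\sigma_h\|_\Sigma\,\|\psi\|$, and dividing by $\|\psi\|$ finishes the proof. I expect the only place needing care to be the repeated use of \eqref{eq:sparita} to justify each orthogonality against $M_h$ and the identification of $\ddiv\sigma_h$.
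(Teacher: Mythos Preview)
Your argument is correct. The paper itself does not give a proof of this lemma at all; it simply cites Douglas--Roberts~[DR], Gardini~\cite{Gardini2009}, and \cite[\S7.4]{BoffiBrezziFortin2013}. What you have written is precisely the classical Aubin--Nitsche duality proof found in those references, specialized to the present setting: the dual problem with datum $\psi=\Pi_h u-\Tlh u$, the representation $\|\psi\|^2=a(\sigma-\sigma_h,\fortin\zeta)$ obtained via the Fortin commuting property and \eqref{eq:sparita}, the split $\fortin\zeta=(\fortin\zeta-\zeta)+\zeta$, and the identification $\ddiv(\sigma-\sigma_h)=-\lambda(u-\Pi_h u)$ to bound $\|u-\Pi_h u\|$ by the $\Sigma$-norm of the vector error. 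Every step checks out, including the repeated uses of $\ddiv\Sigma_h=M_h$ and the resulting identity $\ddiv\sigma_h=-\lambda\Pi_h u$.
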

\begin{proof}
This result has been proved in~\cite{DR} and can be found
in~\cite{Gardini2009} or~\cite[\S 7.4]{BoffiBrezziFortin2013} as well.
\end{proof}

Let $J^C=\{1,\dots,N(h)\}\setminus J$ denote the indices of the discrete eigenvalues not
belonging to the cluster and assume the initial mesh-size is small enough
such that
\begin{equation*}
 K = 
 \sup_{\T_h}\sup_{k \in J^C}\sup_{j\in J}
    \frac{\lambda_j}{ \lvert \lambda_j-\lambda_{h,k} \rvert}
   < \infty.
\end{equation*}
\begin{lemma}\label{l:identityA}
For all $j\in J^C$ we have
\begin{equation*}
(u_{h,j},\Tlh u) 
 =
\frac{\lambda}{\lambda-\lambda_{h,j}}( \Tlh u- \Pi_h u, u_{h,j}) .
\end{equation*}
\end{lemma}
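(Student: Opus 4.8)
The plan is to compute the quantity $(u_{h,j}, \Tlh u)$ in closed form from the defining relations and then reorganize the resulting identity; nothing more is needed. First I would run the same chain of substitutions as in the proof of Lemma~\ref{l:commuting}. Testing the second equation of the discrete eigenvalue problem \eqref{e:discreteEVP} with $v_h = \Tlh u$ gives $(u_{h,j}, \Tlh u) = -\lambda_{h,j}^{-1} b(\sigma_{h,j}, \Tlh u)$; testing the first equation of \eqref{e:discreteSource} with $\tau_h = \sigma_{h,j}$ turns this into $\lambda_{h,j}^{-1} a(\G_h(\Tlh u), \sigma_{h,j})$; the first equation of \eqref{e:discreteEVP} with $\tau_h = \G_h(\Tlh u)$ and the symmetry of $a$ give $-\lambda_{h,j}^{-1} b(\G_h(\Tlh u), u_{h,j})$; and finally the second equation of \eqref{e:discreteSource} with $v_h = u_{h,j}$ yields
\[
(u_{h,j}, \Tlh u) = \frac{\lambda}{\lambda_{h,j}} (u, u_{h,j}).
\]
(Equivalently, one may note $\Tlh = \lambda T_h$ and that $u_{h,j}$ is an eigenfunction of the self-adjoint operator $T_h$ with eigenvalue $\lambda_{h,j}^{-1}$.)

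Next I would use that $u_{h,j} \in M_h$ and that $\Pi_h$ is the $\H$-orthogonal projection onto $M_h$, so that $(\Pi_h u, u_{h,j}) = (u, u_{h,j})$. Combining this with the identity above gives $(\Pi_h u, u_{h,j}) = \lambda_{h,j}\lambda^{-1}(u_{h,j}, \Tlh u)$, and therefore, by linearity of the inner product,
\[
(\Tlh u - \Pi_h u, u_{h,j}) = \Big(1 - \frac{\lambda_{h,j}}{\lambda}\Big)(u_{h,j}, \Tlh u) = \frac{\lambda - \lambda_{h,j}}{\lambda}(u_{h,j}, \Tlh u).
\]
Dividing by $(\lambda - \lambda_{h,j})/\lambda$ produces exactly the claimed identity.

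There is essentially no obstacle in this argument; the only point that must be checked is that $\lambda - \lambda_{h,j} \neq 0$, so that the final division is legitimate. This is guaranteed precisely because $j \in J^C$, i.e. $\lambda_{h,j}$ lies outside the cluster, together with the standing separation hypothesis ($K < \infty$) on the eigenvalue cluster. Apart from that, the proof is just the bookkeeping identity $(\Pi_h u, u_{h,j}) = (u, u_{h,j})$ and the computation of $(u_{h,j}, \Tlh u)$ already carried out in Lemma~\ref{l:commuting}.
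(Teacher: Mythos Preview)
Your proof is correct and follows essentially the same approach as the paper: both derive the identity $\lambda_{h,j}(u_{h,j},\Tlh u)=\lambda(u,u_{h,j})=\lambda(\Pi_h u,u_{h,j})$ via the same chain of substitutions through $b$, $a$, and back to $b$, and then rearrange algebraically. The only cosmetic difference is that the paper keeps the identity in multiplicative form and adds $\lambda(u_{h,j},\Tlh u)$ to both sides, whereas you first divide by $\lambda_{h,j}$ and then subtract; your explicit remark that $\lambda-\lambda_{h,j}\neq 0$ for $j\in J^C$ is a welcome clarification.
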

\begin{proof}
 We have
 \begin{equation*}
\aligned
 -\lambda_{h,j} (u_{h,j}, \Tlh u)
& =
 b( \sigma_{h,j},  \Tlh u)
 =
 -a(\sigma_{h,j},\G_h(\Tlh u))
 =
 b(\G_h(\Tlh u), u_{h,j})
 \\
& =
 -\lambda (u, u_{h,j})=
 -\lambda (\Pi_h u, u_{h,j}). 
\endaligned
 \end{equation*}
Adding $\lambda(u_{h,j} ,  \Tlh u)$ on both sides of this identity
leads to
 \begin{equation*}
 (\lambda-\lambda_{h,j}) (u_{h,j}, \Tlh u)
 =
 \lambda (\Tlh u - \Pi_h u, u_{h,j}). 
 \end{equation*}
\end{proof}

\begin{lemma}[cf.\ \cite{StrangFix1973}]\label{l:Kestimate}
Any eigensolution $(\lambda,\sigma,u)\in\mathbb R\times\Sigma\times W$
in the cluster satisfies
\begin{equation*}
\| \Tlh u - \Lambda_h u \| \leq K \| \Pi_h u - \Tlh u\| .
\end{equation*}
\end{lemma}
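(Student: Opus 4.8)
The key point is that $\Tlh u - \Lambda_h u = \Tlh u - P^W_h \Tlh u$ is precisely the projection of $\Tlh u$ onto the orthogonal complement of $W_h$, i.e. onto $\operatorname{span}\{u_{h,j}\mid j\in J^C\}$. So the plan is to expand $\Tlh u$ in the discrete eigenbasis, isolate the components with indices in $J^C$, and estimate each coefficient using Lemma~\ref{l:identityA}.

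Concretely, first I would write $\Tlh u = \sum_{k=1}^{N(h)} (u_{h,k},\Tlh u)\, u_{h,k}$, so that, since $\{u_{h,k}\}$ is an orthonormal system in $\H$,
\[
\| \Tlh u - \Lambda_h u \|^2
= \sum_{j\in J^C} (u_{h,j},\Tlh u)^2 .
\]
Then I apply Lemma~\ref{l:identityA}, which gives $(u_{h,j},\Tlh u) = \frac{\lambda}{\lambda-\lambda_{h,j}}(\Tlh u - \Pi_h u, u_{h,j})$ for each $j\in J^C$. Substituting and bounding $\lambda/|\lambda-\lambda_{h,j}| = \lambda_j/|\lambda_j - \lambda_{h,j}| \le K$ (using that $\lambda = \lambda_j$ for the eigenvalue attached to $(\lambda,\sigma,u)$ in the cluster, and the definition of $K$) yields
\[
\sum_{j\in J^C} (u_{h,j},\Tlh u)^2
\le K^2 \sum_{j\in J^C} (\Tlh u - \Pi_h u, u_{h,j})^2
\le K^2 \| \Tlh u - \Pi_h u\|^2 ,
\]
the last step being Bessel's inequality for the orthonormal family $\{u_{h,j}\}_{j\in J^C}$. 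Taking square roots gives the claim.

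There is no real obstacle here; the only point that needs a word of care is that $K$ is defined as a supremum over $\T_h$, $k\in J^C$, $j\in J$ of $\lambda_j/|\lambda_j-\lambda_{h,k}|$, so one must note that the eigenvalue $\lambda$ appearing in the statement is one of the cluster eigenvalues $\lambda_j$ ($j\in J$) — which is exactly the hypothesis "$(\lambda,\sigma,u)$ in the cluster" — so that $\lambda/|\lambda-\lambda_{h,j}| \le K$ for every $j\in J^C$. The finiteness of $K$ is guaranteed by the smallness assumption on the initial mesh-size already in force. Everything else is the orthogonal expansion plus Bessel's inequality.
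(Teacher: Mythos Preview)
Your proof is correct and follows essentially the same approach as the paper: expand $\Tlh u - \Lambda_h u$ in the discrete eigenbasis $\{u_{h,j}\}_{j\in J^C}$, apply Lemma~\ref{l:identityA} to each coefficient, and use the definition of $K$ together with Bessel's inequality. The paper's presentation differs only cosmetically (it writes $\|e_h\|^2 = \sum_{j\in J^C}\alpha_j(\Tlh u,u_{h,j})$ and applies Cauchy--Schwarz at the end rather than bounding term by term), but the argument is the same.
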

\begin{proof}
Let us define $e_h = \Tlh u - \Lambda_h u $. The expansion in terms of the
orthonormal basis $\{u_{h,j} \mid j=1,\dots,N(h)\}$ reads
\begin{equation*}
 e_h = \sum_{j\in J^C} \alpha_j u_{h,j}
 \quad
 \text{with }
  \sum_{j\in J^C} \alpha_j^2 = \| e_h \|^2 .
\end{equation*}
This relation, Lemma~\ref{l:identityA}, and Parceval's identity lead to
\begin{equation*}
\aligned
\| e_h \|^2
&=
\sum_{j\in J^C}  \alpha_j (\Tlh u, u_{h,j})
=
\sum_{j\in J^C}  \alpha_j \frac{\lambda}{\lambda-\lambda_{h,j}}(\Tlh u - \Pi_h
u, u_{h,j})\\
&\leq 
K \bigg(\sum_{j\in J^C}  \alpha_j^2\bigg)^{1/2} \|\Tlh u -\Pi_h u \|.
\endaligned
\end{equation*}
\end{proof}

We are now ready to prove the superconvergence result for the eigenvalue
problem.

\begin{proposition}[Superconvergence for the eigenvalue problem]\label{p:hot}
Any eigensolution $(\lambda,\sigma,u)\in\mathbb R\times\Sigma\times W$
in the cluster satisfies
\begin{equation*}
 \|\Pi_h u - \Lambda_h u\|
 \lesssim 
  \rho(h) \| \sigma - \G_h(\Tlh u) \|_\Sigma .
\end{equation*}
\end{proposition}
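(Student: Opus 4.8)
The plan is to obtain the bound by a single application of the triangle inequality, inserting the intermediate quantity $\Tlh u$ between $\Pi_h u$ and $\Lambda_h u$. The two resulting contributions are then each controlled by a result already proved above: the source-problem superconvergence of Lemma~\ref{l:superconvSource} handles $\|\Pi_h u - \Tlh u\|$, while Lemma~\ref{l:Kestimate} transfers that same estimate to $\|\Tlh u - \Lambda_h u\|$ at the cost of the fixed, finite spectral-separation constant $K$.

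Concretely, I would first write
\[
\|\Pi_h u - \Lambda_h u\| \le \|\Pi_h u - \Tlh u\| + \|\Tlh u - \Lambda_h u\|.
\]
By Lemma~\ref{l:Kestimate} the second summand is at most $K\,\|\Pi_h u - \Tlh u\|$, so that
\[
\|\Pi_h u - \Lambda_h u\| \le (1+K)\,\|\Pi_h u - \Tlh u\|.
\]
Then I would invoke Lemma~\ref{l:superconvSource}, applied to the cluster eigenfunction $u$ (with $\sigma=\G(u)$ in the appropriate scaling), to bound $\|\Pi_h u - \Tlh u\|$ by $\rho(h)\,\|\sigma-\G_h(\Tlh u)\|_\Sigma$ up to an absolute constant. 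Absorbing the factor $(1+K)$ into a redefined $\rho(h)$ — legitimate since $K$ is finite under the standing mesh-size assumption on the cluster — yields the assertion.

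The only point that needs a word of care is checking that the hypotheses of the two ingredients really do cover an eigensolution $(\lambda,\sigma,u)$ in the cluster: Lemma~\ref{l:Kestimate} is already phrased for such eigensolutions and builds in the finiteness of $K$, and Lemma~\ref{l:superconvSource} is stated for $\Tlh$ acting on a generic $u\in M$, so no adaptation is needed. In this sense there is essentially no obstacle in the present proposition; the genuine work has been front-loaded into Lemmas~\ref{l:superconvSource}, \ref{l:identityA} and \ref{l:Kestimate} — in particular into the identity of Lemma~\ref{l:identityA}, which is where the spectral gap between the cluster and the remaining discrete eigenvalues is exploited. The proof of Proposition~\ref{p:hot} itself is thus a two-line combination.
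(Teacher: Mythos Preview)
Your proof is correct and matches the paper's argument essentially line for line: the paper also inserts $\Tlh u$ via the triangle inequality, applies Lemma~\ref{l:Kestimate} to obtain the factor $(1+K)$, and finishes with Lemma~\ref{l:superconvSource}. One cosmetic remark: rather than ``absorbing $(1+K)$ into a redefined $\rho(h)$'', it is cleaner to leave $\rho(h)$ untouched and absorb $(1+K)$ into the constant hidden in $\lesssim$, since $K$ is a fixed mesh-independent quantity.
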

\begin{proof}
The triangle inequality and Lemma~\ref{l:Kestimate} give
\begin{equation*}
\|\Pi_h u - \Lambda_h u\|
  \leq \| \Pi_h u - \Tlh u \| + \|\Tlh u - \Lambda_h u\|
\le (1+K)\|\Pi_hu-\Tlh u\|.
\end{equation*}
The right-hand side has been estimated in
Lemma~\ref{l:superconvSource}.
\end{proof}

The following result contains a useful bound of the norm of the error in
$\Sigma$ in terms of our error quantity.

\begin{lemma}[Bound for the $\Sigma$ norm]\label{l:sigmanorm}
Any eigensolution $(\lambda,\sigma,u)\in\mathbb R\times\Sigma\times M$
satisfies
\begin{equation}
\label{eq:SigmaNormBound1}
\| \sigma - \G_h(\Lambda_h u)\|_\Sigma
\lesssim 
| \sigma - \G_h(\Lambda_h u) |_a 
  + (1+\lambda) \|u - \Lambda_h u\| .
\end{equation}
\end{lemma}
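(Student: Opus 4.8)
The plan is to exploit that $\Sigma=H(\ddiv;\Omega)$ carries the norm $\|\ttau\|_\Sigma^2=\|\ttau\|^2+\|\ddiv\ttau\|^2$, that the first contribution is precisely $|\ttau|_a^2$ for the problem at hand, and hence to reduce the whole estimate to a bound on the divergence $\|\ddiv(\sigma-\G_h(\Lambda_h u))\|$.

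First I would identify the two divergences explicitly. From the second equation of \eqref{e:exactEVP} with the concrete choices $a(\sigma,\tau)=(\sigma,\tau)$ and $b(\tau,v)=(\ddiv\tau,v)$ one reads off $\ddiv\sigma=-\lambda u$ in $L^2(\Omega)$. For the discrete counterpart, Lemma~\ref{l:commuting} gives $b(\G_h(\Lambda_h u),v_h)=-(\lambda P^W_h u,v_h)_\H$ for all $v_h\in M_h$; since $\ddiv\G_h(\Lambda_h u)\in\ddiv(\Sigma_h)=M_h$ by \eqref{eq:sparita} and $P^W_h u\in W_h\subseteq M_h$, testing with $v_h=\ddiv\G_h(\Lambda_h u)+\lambda P^W_h u\in M_h$ yields the exact identity $\ddiv\G_h(\Lambda_h u)=-\lambda P^W_h u$. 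Subtracting, $\ddiv(\sigma-\G_h(\Lambda_h u))=-\lambda(u-P^W_h u)$.

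It remains to observe that, because $P^W_h$ is the $\H$-orthogonal projection onto $W_h$ and $\Lambda_h u\in W_h$, the best-approximation property gives $\|u-P^W_h u\|\le\|u-\Lambda_h u\|$. Hence $\|\ddiv(\sigma-\G_h(\Lambda_h u))\|=\lambda\|u-P^W_h u\|\le\lambda\|u-\Lambda_h u\|$, and combining this with $\|\sigma-\G_h(\Lambda_h u)\|^2=|\sigma-\G_h(\Lambda_h u)|_a^2$ and the elementary inequality $\sqrt{x^2+y^2}\le x+y$ produces \eqref{eq:SigmaNormBound1}, with $(1+\lambda)$ simply absorbing the factor $\lambda$.

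There is essentially no obstacle here: the proof is a direct structural computation. The one point worth being careful about is that the commuting-diagram property \eqref{eq:sparita} (surjectivity of $\ddiv$ onto $M_h$) is what makes $\ddiv\G_h(\Lambda_h u)=-\lambda P^W_h u$ an \emph{exact} equality rather than one holding only after applying the $L^2$-projection $\Pi_h$; since $P^W_h u$ already lies in $M_h$, even that caveat is immaterial, so the estimate follows cleanly.
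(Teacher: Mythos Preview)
Your argument is correct. The core identity $\ddiv(\sigma-\G_h(\Lambda_h u))=-\lambda(u-P^W_h u)$ and the best-approximation step $\|u-P^W_h u\|\le\|u-\Lambda_h u\|$ are exactly the ingredients the paper uses as well, so in substance the proofs coincide.

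The presentation, however, is more elementary than the paper's. The paper invokes the abstract inf--sup stability of the continuous mixed problem to bound $\|\sigma-\G_h(\Lambda_h u)\|_\Sigma$ by the supremum of the full bilinear form, then evaluates the three pieces $a(\cdot,\tau)$, $b(\cdot,v)$, and $b(\tau,u-\Lambda_h u)$ separately; the last of these contributes the extra~$1$ in the factor $(1+\lambda)$. You bypass that machinery entirely by exploiting the explicit structure $\|\ttau\|_\Sigma^2=|\ttau|_a^2+\|\ddiv\ttau\|^2$ of the $H(\ddiv)$ norm for the concrete Laplace setting, which yields the slightly sharper constant~$\lambda$ in place of $1+\lambda$. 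The trade-off is that the paper's route would generalize more readily to abstract mixed problems where the $\Sigma$-norm does not split so cleanly, whereas your argument is tied to the specific choice of spaces and forms made in Section~\ref{se:algorithm}.
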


\begin{proof}
The stability of the continuous problem implies
\begin{equation*}
\begin{aligned}
&
\| \sigma - \G_h(\Lambda_h u)\|_\Sigma
\\
&\qquad
\lesssim 
\sup_{\substack{(\tau,v)\in \Sigma\times M\\
                \|\tau\|_\Sigma + \|v\| =1}}
\big( a(\sigma - \G_h(\Lambda_h u),\tau)
      + b(\sigma - \G_h(\Lambda_h u), v) 
      + b(\tau, u-\Lambda_h u)      
\big).
\end{aligned}
\end{equation*}
The identity~\eqref{eq:sparita} together with the continuous and discrete
eigenvalue problems imply
\begin{equation*}
\aligned
b(\sigma - \G_h(\Lambda_h u), v)
&=
b(\sigma,v) - b(\G_h(\Lambda_h u), \Pi_h v)
=
\lambda\big( (P^W_h u, \Pi_h v) - (u,v)\big)\\
&= \lambda(P^W_h u - u, v) .
\endaligned
\end{equation*}
Estimate~\eqref{eq:SigmaNormBound1} then follows from the continuity of $a$
and $b$
together with the elementary estimate
$\|u-P^W_h u\| \leq \|u-\Lambda_h u\|$.

\end{proof}

\subsection{Properties valid for all refinements $\T_h$ of $\T_H$}

We start this section by proving the efficiency of our theoretical error
estimator on the generic mesh $\T_h$.

\begin{proposition}[Efficiency]
\label{l:efficiency}
Let $(\sigma,u)$ be an eigenpair associated to the eigenvalue $\lambda$, then
there exists a positive constant $C_{\mathrm{eff}}$, independent of $h$, such
that
\begin{equation}
\label{e:efficiency}
\mu_h(u;\T_h)\le C_{\mathrm{eff}}d(u,\Lambda_h u).
\end{equation}
\end{proposition}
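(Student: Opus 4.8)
The plan is to prove the efficiency estimate $\mu_h(u;\T_h) \lesssim d(u,\Lambda_h u)$ by a standard bubble-function argument applied term by term to the residual-type indicator $|\Lambda_h u|_{\eta,T}$, using that $\sigma$ and $u$ are continuous eigenmodes with $\sigma = \nabla u$ and $\curl\sigma = 0$. Recall $d(u,\Lambda_h u)^2 = \|u - \Lambda_h u\|^2 + |\G(u) - \G_h(\Lambda_h u)|_a^2$, and that $\G(u) = \nabla u = \sigma$ for the mixed Laplacian. Write $\ssigma_h := \G_h(\Lambda_h u)$ and $w_h := \Lambda_h u \in M_h$, so that $\mu_h(u;T)^2 = \|h_T(\ssigma_h - \nabla w_h)\|_T^2 + \|h_T\curl\ssigma_h\|_T^2 + \sum_{E\in\mathcal E(T)} h_E\|[\ssigma_h]_E\cdot t_E\|_E^2$.

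First I would handle the volumetric term $\|h_T(\ssigma_h - \nabla w_h)\|_T$. Since $\sigma = \nabla u$, we have $\ssigma_h - \nabla w_h = (\ssigma_h - \sigma) + \nabla(u - w_h)$; but $\nabla(u-w_h)$ need not make sense as $w_h \in L^2$ only, so instead I would test against an elementwise polynomial bubble $\mathbf b_T(\ssigma_h - \nabla w_h)$ (with $\nabla w_h$ the broken gradient on $T$, which is a polynomial), integrate by parts on $T$, and use $\int_T (\ssigma_h - \sigma)\cdot\mathbf{b}_T\,d\x = \int_T (\ssigma_h - \sigma)\cdot\mathbf{b}_T\,d\x$ together with $\int_T (\nabla w_h - \nabla u)\cdot \mathbf{b}_T\,d\x = \int_T (u - w_h)\ddiv\mathbf{b}_T\,d\x$ (no boundary term since the bubble vanishes on $\partial T$). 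Inverse estimates then yield $\|h_T(\ssigma_h-\nabla w_h)\|_T \lesssim \|\ssigma_h - \sigma\|_T + \|u - w_h\|_T$. For the curl term, since $\curl\sigma = 0$, I use a vector bubble and the elementwise identity $\int_T \curl\ssigma_h\, \phi_T \,d\x = \int_T \ssigma_h\cdot\ccurl\phi_T\,d\x = \int_T (\ssigma_h - \sigma)\cdot\ccurl\phi_T\,d\x$ for $\phi_T$ a scalar bubble vanishing on $\partial T$; inverse estimates give $\|h_T\curl\ssigma_h\|_T \lesssim \|\ssigma_h-\sigma\|_T$. For the edge jump term, since $[\sigma]_E\cdot t_E = 0$ across interior edges (as $\sigma\in H(\ddiv)$ has continuous normal but here we need tangential — actually $\sigma = \nabla u$ with $u\in H^1$ so $\sigma$ is a full gradient and $[\sigma]_E\cdot t_E = 0$; on boundary edges $\sigma\cdot t_E = \partial_t u = 0$ by the Dirichlet condition), I use an edge bubble $\mathbf b_E$ extended into the two neighboring triangles, write $\int_E [\ssigma_h]_E\cdot t_E\, (\mathbf b_E\cdot t_E)\,ds = \int_E [\ssigma_h - \sigma]_E\cdot t_E\,(\mathbf b_E\cdot t_E)\,ds$ and integrate by parts over the edge patch $\omega_E$, bringing in $\curl(\ssigma_h-\sigma)$ on the two elements and $(\ssigma_h - \sigma)\cdot\ccurl(\cdot)$ terms; standard scaling then gives $h_E\|[\ssigma_h]_E\cdot t_E\|_E^2 \lesssim \|\ssigma_h - \sigma\|_{\omega_E}^2 + \sum_{T\subset\omega_E}\|h_T\curl\ssigma_h\|_T^2$, and the last sum is reabsorbed by the already-proven curl bound.

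Summing over all $T\in\T_h$, with finite overlap of the patches $\omega_E$, gives $\mu_h(u;\T_h)^2 \lesssim \|\ssigma_h - \sigma\|^2 + \|u - w_h\|^2$, and since $\|\cdot\|$ on vector fields is exactly $|\cdot|_a$, the right-hand side is $|\sigma - \G_h(\Lambda_h u)|_a^2 + \|u - \Lambda_h u\|^2 \leq d(u,\Lambda_h u)^2$. The main obstacle I anticipate is the bookkeeping in the edge-jump estimate: one must carefully account for the broken-gradient/broken-curl contributions over the two-element patch and verify that the tangential jump of the \emph{exact} $\sigma$ vanishes (using $\sigma = \nabla u$ with $u\in H^1_0$-type regularity for the boundary edges), so that only the computable jump of $\ssigma_h$ survives; everything else is the classical Verfürth-type bubble machinery. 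A minor point is tracking the $(1+\lambda)$-type dependence, but per Remark \ref{r:constants} the paper does not track eigenvalue-dependence of constants, so this is absorbed into $\lesssim$.
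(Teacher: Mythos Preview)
Your proposal is correct and follows essentially the same approach as the paper: the paper's proof also proceeds term by term, invoking \cite[Lemma~6.3]{Carstensen1997} for the volumetric term $\|h_T(\G_h(\Lambda_h u)-\nabla\Lambda_h u)\|_T$ and \cite[Theorem~3.1]{Alonso1996} for the $\curl$ and tangential-jump terms, both of which are precisely the Verf\"urth bubble-function arguments you spell out. The only difference is that the paper cites these references rather than writing out the bubble machinery explicitly.
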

\begin{proof}
For the reader's convenience, we recall the definition of the error indicator
$\mu_h(u;T)$ for a given element $T\in\T_h$:
\begin{equation}
\aligned
\mu_{h}(u;T)^2
={}&
   \| h_T (\G_h(\Lambda_h u) - \nabla \Lambda_h u) \|_T^2 
   + \| h_T \curl \G_h(\Lambda_h u) \|_T^2 \\
&   + \sum_{E\in\mathcal E(T)} h_E \| [\G_h(\Lambda_h u)]_E \cdot t_E\|_E^2
.
\endaligned
\end{equation}
Following the same arguments as
in \cite[Lemma~6.3]{Carstensen1997}, we can
prove that
\begin{equation}
\label{e:eff1}
h_T^2 \|G_h(\Lambda_h u) - \nabla \Lambda_h u\|_{T}^2 \lesssim 
d(u,\Lambda_h u)^2.
\end{equation}
Finally, arguing as in the proof of Theorem 3.1 in~\cite{Alonso1996}, we can
bound the remaining terms of the error indicator as follows:
\begin{equation}
\label{e:eff2}
\| h_T \curl \G_h(\Lambda_h u) \|_T^2 + \sum_{E\in\mathcal E(T)} h_E \|
[\G_h(\Lambda_h u)]_E \cdot t_E\|_E^2 
\lesssim \|\sigma - G_h(\Lambda_h u)\|_{\tilde{T}}^2,
\end{equation}
where $\tilde{T}$ denotes the union of $T$ and its neighboring elements.\\
We then obtain the desired result by summing equations~(\ref{e:eff1})
and~(\ref{e:eff2}) over each elements $T\in\T_h$.

\end{proof}

The next result shows a uniform discrete reliability of the theoretical error
estimator when evaluated on the mesh $\T_h$, refinement of $\T_H$.

First of all, we recall the well-known discrete Helmholtz decomposition which
is valid for the finite element spaces we are considering. Suitable references
for this result are~\cite{DougActa} in the framework of discrete exterior
calculus or~\cite{GR}.
In our setting the discrete Helmholtz decomposition reads
(see~\cite[Lemma~2.5]{HuangXu2012}): for any
$\zeta_h\in\Sigma_h$ there exist $\alpha_h\in M_h$ and
$\beta_h\in\mathcal P_{k+1}(\T_h)$ (the space of continuous piecewise
polynomial
of degree $k+1$) such that
\begin{equation}
 \zeta_h = \G_h(\alpha_h) + \ccurl \beta_h.
\label{eq:Helmholtz}
\end{equation}
In particular, $\alpha_h\in M_h$ is such that
\begin{equation}
\label{eq:Helm}
 \aligned
&a(\G_h(\alpha_h),\tau_h)+b(\tau_h,\alpha_h)=0&&\forall\tau_h\in\Sigma_h\\
&b(\G_h(\alpha_h),v_h)=b(\zeta_h,v_h)&&\forall v_h\in M_h.
\endaligned
\end{equation}
By definition of the bilinear form and the fact that $\ddiv\Sigma_h=M_h$, we
have that $\ddiv(\G_h(\alpha_h)-\zeta_h)=0$, hence
$\G_h(\alpha_h)-\zeta_h=\ccurl\beta_h$.
Using again~\eqref{eq:sparita} there exists
$\hat\tau_h\in\Sigma_h$ such that $\ddiv\hat\tau_h=\alpha_h$.
From the discrete inf-sup condition we have
$\|\hat\tau_h\|\le C\|\alpha_h\|$.
Hence
\[
\|\alpha_h\|^2=b(\hat\tau_h,\alpha_h)=a(\G_h(\alpha_h),\hat\tau_h)
\le |\G_h(\alpha_h)|_a\|\hat\tau_h\|\le C|\G_h(\alpha_h)|_a\|\alpha_h\|,
\]
from which we obtain
\begin{equation}
\label{eq:stimaHelm}
\|\alpha_h\|\le C|\G_h(\alpha_h)|_a.
\end{equation}

\begin{proposition}[Discrete reliability]
  \label{p:drel}
Provided the mesh-size of $\T_H$ is sufficiently small, we have
\begin{equation*}
\aligned
&|\G_h(\Lambda_h u)-\G_H(\Lambda_H u)|_a +
\| \Lambda_h u - \Lambda_H u\|\\
&\qquad\le\Cdrel
\mu_H (u;\T_H \setminus \T_h)+C
\rho(H) (d(u,\Lambda_h u) + d(u,\Lambda_H u) ).
\endaligned
\end{equation*}
\end{proposition}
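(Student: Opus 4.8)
The plan is to follow the standard discrete-reliability argument for AFEM, adapted to the mixed setting, which proceeds by testing the difference of the two discrete solutions against a carefully chosen function and localizing to the refined region $\T_H\setminus\T_h$. First I would set $e_\sigma = \G_h(\Lambda_h u)-\G_H(\Lambda_H u)$ and $e_u = \Lambda_h u - \Lambda_H u$, and use the stability (inf-sup) of the discrete mixed problem on $\T_h$ to write $|e_\sigma|_a + \|e_u\|$ (up to $\Sigma$-norm control via Lemma~\ref{l:sigmanorm}, absorbing the lower-order terms using the superconvergence Proposition~\ref{p:hot}) as a supremum over test pairs $(\tau_h,v_h)\in\Sigma_h\times M_h$ of the residual of $(\G_H(\Lambda_H u),\Lambda_H u)$ tested against $(\tau_h,v_h)$. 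By Lemma~\ref{l:commuting}, both $\Lambda_h u$ and $\Lambda_H u$ solve the same mixed source problem with right-hand side $\lambda P^W_h u$, resp.\ $\lambda P^W_H u$, so the residual collapses to terms supported where $\Sigma_h$ and $\Sigma_H$ (resp.\ $M_h$, $M_H$) differ — i.e.\ on the refined elements $\T_H\setminus\T_h$ — plus a commutator-type term coming from $P^W_h u - P^W_H u$.

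The key technical step is to take, in the Helmholtz spirit of \eqref{eq:Helmholtz}, a test function $\tau_h\in\Sigma_h$ that is $\G_H$-orthogonal outside the refined region. Concretely, I would write $e_\sigma = \G_h(\alpha_h)+\ccurl\beta_h$ via the discrete Helmholtz decomposition, handle the $\G_h(\alpha_h)$ part by testing the div-equation (where \eqref{eq:sparita} and the identity $b(\sigma-\G_h(\Lambda_h u),v)=\lambda(P^W_h u-u,v)$ from the proof of Lemma~\ref{l:sigmanorm} give the $\rho(H)$-type contribution), and handle the $\ccurl\beta_h$ part by an integration-by-parts / Scott--Zhang quasi-interpolation argument: subtract a coarse interpolant $\beta_H\in\mathcal P_{k+1}(\T_H)$ so that $\beta_h-\beta_H$ vanishes (in the appropriate averaged sense) on elements of $\T_H\cap\T_h$. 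This produces exactly the element residuals $h_T(\G_H(\Lambda_H u)-\nabla\Lambda_H u)$, the curl term $h_T\curl\G_H(\Lambda_H u)$, and the tangential jumps $h_E[\G_H(\Lambda_H u)]_E\cdot t_E$ — namely $\mu_H(u;\T_H\setminus\T_h)$ — after a Cauchy--Schwarz step and the stability bound $\|\beta_h-\beta_H\|\lesssim\|e_\sigma\|$ for the curl potential, which is bounded by $\|e_u\|$ plus lower order via \eqref{eq:stimaHelm}-type estimates.

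The lower-order remainder terms — those involving $P^W_h u - u$ and $P^W_H u - u$, and the $L^2$ part $\|e_u\|$ that needs to be re-absorbed — are controlled by the superconvergence Proposition~\ref{p:hot} and Lemma~\ref{l:sigmanorm}: $\|u-\Lambda_h u\|$ and $\|\Pi_h u - \Lambda_h u\|$ are of order $\rho(h)$ times the $\Sigma$-error, which is comparable to $d(u,\Lambda_h u)$, and likewise on $\T_H$; since $\rho(H)\ge\rho(h)$ for a refinement, everything is dominated by $\rho(H)(d(u,\Lambda_h u)+d(u,\Lambda_H u))$. Finally, for the mesh-size smallness: the $\|e_u\|$ term that appears on the right after applying Lemma~\ref{l:sigmanorm} (coming with a factor $(1+\lambda)$ but also a factor $\rho(H)$ from superconvergence) must be absorbed into the left-hand side, which is legitimate once $\rho(H)$ is small enough.

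The main obstacle I expect is the treatment of the $\ccurl\beta_h$ term: unlike in the conforming/primal case, here one must simultaneously respect the commuting-diagram constraint \eqref{eq:sparita}, control the Helmholtz potential $\beta_h$ in $H^1$ uniformly, and choose the coarse interpolant $\beta_H$ so that the resulting quasi-interpolation error is localized \emph{exactly} to $\T_H\setminus\T_h$ (not merely to a patch around it) — this is what makes the estimator appear with the correct domain of summation and is the genuinely new, mixed-specific piece of the argument, paralleling \cite[Lemma~2.5]{HuangXu2012} and the source-problem analysis of \cite{Carstensen1997,Alonso1996} but now with the eigenvalue right-hand side $\lambda P^W_h u$ contributing the extra projection-difference terms.
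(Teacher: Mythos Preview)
Your treatment of the vector error $|e_\sigma|_a=|\G_h(\Lambda_h u)-\G_H(\Lambda_H u)|_a$ via the discrete Helmholtz decomposition $e_\sigma=\G_h(\alpha_h)+\ccurl\beta_h$, with Scott--Zhang interpolation for $\beta_h$ and the identity $|\G_h(\alpha_h)|_a^2=\lambda(P^W_hu-P^W_Hu,\alpha_h)$ handled by superconvergence, is essentially the paper's argument. The gap is in the scalar error $\|e_u\|=\|\Lambda_h u-\Lambda_H u\|$.

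You propose to treat $\|e_u\|$ as a lower-order remainder, writing that ``$\|u-\Lambda_h u\|$ and $\|\Pi_h u-\Lambda_h u\|$ are of order $\rho(h)$ times the $\Sigma$-error'' and that $\|e_u\|$ ``comes with a factor $\rho(H)$ from superconvergence'' and can therefore be absorbed. This is incorrect: Proposition~\ref{p:hot} gives a $\rho(h)$ gain only for $\|\Pi_h u-\Lambda_h u\|$, not for $\|u-\Lambda_h u\|$; the best-approximation piece $\|u-\Pi_h u\|$ carries no extra smallness. Consequently $\|e_u\|\le\|u-\Lambda_h u\|+\|u-\Lambda_H u\|$ only yields $d(u,\Lambda_h u)+d(u,\Lambda_H u)$ \emph{without} the factor $\rho(H)$, and that form of the estimate is useless for the optimality proof of Theorem~\ref{th:main}, where the $d$-terms must be absorbed.

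The paper closes this gap by a separate duality argument for $\|e_u\|$: one solves the auxiliary Poisson problem with right-hand side $e_u$, uses a stable regular decomposition to extract an $H^1$ lifting $z$ with $\ddiv z=e_u$, and tests the first equations of the two discrete problems with the Fortin interpolants $\fortin z$ and $\fortinH z$. Because $\fortin z=\fortinH z$ on unrefined elements and $a(\nabla_H(\Lambda_H u),(\fortinH-\fortin)z)=0$, this yields
\[
\|e_u\|^2\lesssim |e_\sigma|_a\,\|\fortin z\| + \|H(\G_H(\Lambda_H u)-\nabla_H\Lambda_H u)\|_{\T_H\setminus\T_h}\,\|H^{-1}(\fortin z-\fortinH z)\|,
\]
hence $\|e_u\|\lesssim |e_\sigma|_a+\mu_H(u;\T_H\setminus\T_h)$. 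In particular, the residual $h_T(\G_H(\Lambda_H u)-\nabla\Lambda_H u)$ in $\mu_H$ enters through \emph{this} step, not through the $\ccurl\beta_h$ analysis as your outline suggests. Without this duality/Fortin step (or an equivalent device), the bound on $\|\Lambda_h u-\Lambda_H u\|$ cannot be obtained with the required $\rho(H)$ prefactor on the $d$-terms.
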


\begin{proof}
From the discrete Helmholtz decomposition \eqref{eq:Helmholtz}
there exist
$\alpha_h \in M_h$ and $\beta_h\in\mathcal P_{k+1}(\T_h)$
such that
\begin{equation*}
\G_h(\Lambda_h u) - \G_H(\Lambda_H u) 
     = \G_h(\alpha_h) + \ccurl \beta_h.
\end{equation*}
The term $\|\ccurl\beta_h\|$ can be bounded by using standard arguments as
in~\cite{DuranGastaldiPadra1999,BM,HuangXu2012}.
Actually, taking $\overline{\beta}_H$ as the Scott-Zhang interpolant
\cite{ScottZhang1990} of
$\beta_h$ on the mesh $\T_H$,
\[
\aligned
|\ccurl\beta_h|^2_a&=a(\G_h(\Lambda_h u)-\G_H(\Lambda_H u),\ccurl\beta_h)\\
&=-a(\G_H(\Lambda_H u),\ccurl(\beta_h-\overline{\beta}_H))\\
&=\sum_{T\in\T_H\setminus\T_h}\Big(
\int_T(\beta_h-\overline{\beta}_H)\curl\G_H(\Lambda_H u)\,dx\\
&\quad-
\int_{\partial T}(\beta_h-\overline{\beta}_H)\G_H(\Lambda_H u)\cdot t\,ds
\Big).
\endaligned
\]
Standard estimates for the Scott-Zhang interpolant give
\[
|\ccurl\beta_h|_a\lesssim\mu_H (u;\T_H\setminus\T_h).
\]
The integration by parts and some straightforward algebraic
manipulations lead to
\begin{equation*}
\begin{aligned}
|\G_h(\alpha_h)|_a^2%
&=
a(\G_h(\Lambda_h u)-\G_H(\Lambda_H u), \G_h(\alpha_h))
\\
&
= \lambda (P^W_hu-P^W_Hu, \alpha_h)
\\
&
= \lambda \big( (P^W_hu-\Pi_h u, \alpha_h)
+(\Pi_h u - \Pi_H u, \alpha_h -\Pi_H \alpha_h)\\
&\quad+(\Pi_H u -P^W_Hu, \alpha_h) \big).
\end{aligned}
\end{equation*}
We observe that $\|P_h^Wu-\Pi_hu\|\le\|\Lambda_hu-\Pi_hu\|$; indeed, $P_h^Wu$
is the best $H$-approximation of $u$ into $W_h$ and is characterized by
$(P_h^Wu-u,v_h)=(P_h^Wu-\Pi_hu,v_h)=0$ for all $v_h\in W_h$.
Hence, the estimate~\eqref{eq:stimaHelm}, Proposition~\ref{p:hot},
and Lemma~\ref{l:sigmanorm} prove for the first and the last term that
\begin{equation*}
\begin{aligned}
&(P^W_hu-\Pi_h u, \alpha_h)
+(\Pi_H u -P^W_Hu, \alpha_h)\\
&\qquad\lesssim
\big(\|P^W_hu-\Pi_h u\| + 
\|\Pi_H u -P^W_Hu\|\big)
|\G_h(\alpha_h)|_a
\\
&\qquad
\lesssim
\rho(H)(d(u,\Lambda_h u)+d(u,\Lambda_H u))
|\G_h(\alpha_h)|_a .
\end{aligned}
\end{equation*}
For the analysis of the remaining term, set
$\xi=\alpha_h -\Pi_H \alpha_h$.
It is shown in \cite[Lemma~2.8 and Equation~(3.9)]{HuangXu2012} that
$\xi$ satisfies
$\|\xi\| \lesssim H |\G_h(\alpha_h)|_a$.
Thus, we have with Proposition~\ref{p:hot} that
\begin{equation*}
\begin{aligned}
(\Pi_h u - \Pi_H u, \xi)
&
=
(\Pi_h u - \Lambda_H u, \xi)
\\
&
=
(\Pi_h u - \Lambda_h u, \xi)
+
(\Lambda_h u - \Lambda_H u, \xi)
\\
&
\lesssim 
(\rho(H)d(u,\Lambda_h u) 
+ H \|\Lambda_h u - \Lambda_H u\|)
|\G_h(\alpha_h)|_a .
\end{aligned}
\end{equation*}
Altogether we obtain for the error in the vector variable that
\begin{equation*}
\begin{aligned}
&
|\G_h(\Lambda_h u)-\G_H(\Lambda_H u)|_a
\\
&\qquad
\lesssim
\mu_H (u;\T_H\setminus\T_h)
+
\rho(H) (d(u,\Lambda_h u) +d(u,\Lambda_H u))+ H \|\Lambda_h u - \Lambda_H u\|.
\end{aligned}
\end{equation*}

It remains to estimate the error in the scalar variable.

Let $\hat z$ be the gradient of the solution $\hat\phi$ of the problem
\[
\aligned
&\Delta\hat\phi=\Lambda_h u - \Lambda_H u&&\text{in }\Omega\\
&\hat\phi=0&&\text{on }\partial\Omega.
\endaligned
\]

Using a (non-orthogonal) stable decomposition like the ones adopted
in~\cite[Lemma~3.3]{GaticaOyarzuaSayas2011} or~\cite[Lemma~2.1]{pasciak}, it
is possible to find $z\in H^1(\Omega)$ such that
\[
\hat z=z+\ccurl\psi.
\]

In particular we have
\begin{equation*}
\aligned
&\ddiv z = \Lambda_h u - \Lambda_H u\\
&\|z\|+\|\nabla z\|
 \lesssim \|\Lambda_h u - \Lambda_H u\|.
\endaligned
\end{equation*}
It follows
\begin{equation}
\label{e:uguale}
\aligned
\|\Lambda_h u-\Lambda_H u\|^2&=b(z,\Lambda_h u-\Lambda_H u)\\
&=b(\fortin z,\Lambda_h u)-b(\fortinH z,\Lambda_H u)\\
&=-a(\G_h(\Lambda_h u),\fortin z)+a(\G_H(\Lambda_H u),\fortinH z)\\
&=a(\G_H(\Lambda_H u)-\G_h(\Lambda_h u),\fortin z)+
a(\G_H(\Lambda_H u),(\fortinH-\fortin)z)
\\
&\le|\G_H(\Lambda_H u)-\G_h(\Lambda_h u)|_a\|\fortin z\|\\
&\quad+
a(\G_H(\Lambda_H u)-\nabla_H(\Lambda_H u),(\fortinH-\fortin)z),
\endaligned
\end{equation}
where we have used the definition of the Fortin operators $\fortin$,
$\fortinH$, of $\G_h$ and $\G_H$, and, in the last term, the
fact that the quantity 
$a(\nabla_H(\Lambda_H u),(\fortinH-\fortin)z)$ 
vanishes.

We observe furthermore that $\fortin z-\fortinH z = 0$ on the unrefined
elements $\T_H\cap\T_h$.
Since $z$ is smooth enough to allow for stability and first-order
approximation of $\fortin$ and $\fortinH$, we conclude
\begin{equation*}
\begin{aligned}
\|\Lambda_h u - \Lambda_H u\|^2
\leq{}
&
|\G_H(\Lambda_H u)-\G_h(\Lambda_h u)|_a \|\fortin z \|\\
&+
\| H (\G_H(\Lambda_H u) - \nabla_H (\Lambda_H u) ) \|_{\T_H\setminus\T_h}
\| H^{-1} (\fortin z - \fortinH z )\|
\\
\lesssim{}
&
\|\Lambda_h u - \Lambda_H u\|\\
&(|\G_H(\Lambda_H u)-\G_h(\Lambda_h u)|_a + \mu_H(\Lambda_H
u;\T_H\setminus\T_h)) .
\end{aligned}
\end{equation*}
\end{proof}

By passing to the limit in the statement of Proposition~\ref{p:drel}, and
observing that for $H$ small enough the second term on the right-hand side can
be absorbed, we obtain the standard reliability estimate.

\begin{corollary}[Reliability]\label{c:reliability}
Provided the initial mesh-size is sufficiently fine, we have
\begin{equation*}
\sum_{j\in J} d(u_j,\Lambda_h u_j)^2
\leq
\Crel^2
\sum_{j\in J} \mu_h(u_j,\T_h)^2 .
\end{equation*}
\end{corollary}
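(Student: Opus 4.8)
The plan is to obtain reliability by passing to the limit in the discrete reliability estimate of Proposition~\ref{p:drel}. Rename the mesh appearing in the statement of the corollary as $\T_H$, fix an eigenpair $(\sigma,u)$ in the cluster (so that $\G(u)=\sigma$), and let $\T_h$ range over a sequence of successive admissible refinements of $\T_H$ in which every triangle of $\T_H$ is bisected at least once and $\max_{T\in\T_h}h_T\to0$. Then $\T_H\setminus\T_h=\T_H$ for every member of the sequence, and Proposition~\ref{p:drel} (whose smallness hypothesis on $\T_H$ is met because $\T_H$ refines $\T_0$, assumed fine enough, and \eqref{e:epsilon} then guarantees $\Lambda_H$, $\Lambda_h$ are well defined) gives
\begin{equation*}
|\G_h(\Lambda_h u)-\G_H(\Lambda_H u)|_a+\|\Lambda_h u-\Lambda_H u\|
\le\Cdrel\,\mu_H(u;\T_H)+C\rho(H)\bigl(d(u,\Lambda_h u)+d(u,\Lambda_H u)\bigr).
\end{equation*}

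The main step — and the place where the real work lies — is to show that the fine-mesh quantities converge to their continuous counterparts, i.e.\ $\Lambda_h u\to u$ in $L^2$ and $\G_h(\Lambda_h u)\to\sigma$ in $H(\ddiv)$ as $h\to0$, so that $d(u,\Lambda_h u)\to0$. For the scalar part I would bound $\|u-\Lambda_h u\|\le\|u-\Pi_h u\|+\|\Pi_h u-\Lambda_h u\|$: the first term vanishes by density of $M_h$, and the second is controlled via Proposition~\ref{p:hot} by $\rho(h)\|\sigma-\G_h(\Tlh u)\|_\Sigma$, where $\|\sigma-\G_h(\Tlh u)\|_\Sigma\to0$ by the standard a~priori theory for the mixed source problem under the three assumptions of Section~\ref{se:setting}. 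For the vector part I would split $\G_h(\Lambda_h u)=\G_h(\Tlh u)+\G_h(\Lambda_h u-\Tlh u)$; by the discrete inf--sup stability $\|\G_h(\Lambda_h u-\Tlh u)\|_\Sigma\lesssim\|\Lambda_h u-\Tlh u\|$, which tends to zero by Lemma~\ref{l:Kestimate} (or Lemma~\ref{l:superconvSource} together with the scalar bound just obtained), while $\G_h(\Tlh u)\to\sigma$ in $H(\ddiv)$ by source-problem convergence. Passing to the limit in the displayed inequality then yields
\begin{equation*}
|\sigma-\G_H(\Lambda_H u)|_a+\|u-\Lambda_H u\|\le\Cdrel\,\mu_H(u;\T_H)+C\rho(H)\,d(u,\Lambda_H u).
\end{equation*}

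To finish, Definition~\ref{def:distance} and $\G(u)=\sigma$ give $d(u,\Lambda_H u)\le\|u-\Lambda_H u\|+|\sigma-\G_H(\Lambda_H u)|_a$; inserting this bound on the left, choosing the initial mesh fine enough that $C\rho(H)\le1/2$ uniformly over all admissible refinements (legitimate since $\rho$ is controlled by the maximal mesh-size, which only decreases under refinement, in the same spirit as the conditions $\Cqo\rho(h_0)\le1/2$ used elsewhere), and absorbing $C\rho(H)\,d(u,\Lambda_H u)$ into the left-hand side yields $d(u,\Lambda_H u)\le2\Cdrel\,\mu_H(u;\T_H)$. Applying this with $(\sigma,u)=(\sigma_j,u_j)$ for each $j\in J$, squaring, summing, and relabeling $\T_H$ as $\T_h$ gives the claim with $\Crel=2\Cdrel$. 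The only genuine obstacle is the limit passage of the second paragraph; everything else is bookkeeping.
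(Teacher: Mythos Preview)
Your overall strategy is exactly the paper's: the corollary is stated as an immediate consequence of Proposition~\ref{p:drel} ``by passing to the limit\dots\ and observing that for $H$ small enough the second term on the right-hand side can be absorbed.'' So the plan and the absorption step are fine.

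There is, however, one concrete slip in the limit argument. The inequality you invoke,
\[
\|\G_h(\Lambda_h u-\Tlh u)\|_\Sigma\lesssim\|\Lambda_h u-\Tlh u\|
\quad\text{``by discrete inf--sup stability,''}
\]
is false in general and is not what the inf--sup condition gives. The inf--sup condition yields the \emph{opposite} direction, $\|w_h\|\lesssim|\G_h(w_h)|_a$ (this is precisely~\eqref{eq:stimaHelm}); morally $\G_h$ is the discrete gradient, and the gradient is not bounded from $L^2$ to $L^2$, so no uniform bound of $\G_h$ in terms of the $L^2$ norm of its argument can hold. What saves you is the special structure of $\Lambda_h u-\Tlh u$: by Lemma~\ref{l:commuting} and the definition of $\Tlh$, the pair $(\G_h(\Lambda_h u-\Tlh u),\Lambda_h u-\Tlh u)$ solves the discrete mixed source problem with right-hand side $\lambda(P^W_h u-\Pi_h u)$, and the \emph{full} mixed stability estimate then gives
\[
\|\G_h(\Lambda_h u-\Tlh u)\|_\Sigma+\|\Lambda_h u-\Tlh u\|\lesssim\lambda\,\|P^W_h u-\Pi_h u\|.
\]
Since $\|P^W_h u-\Pi_h u\|\le\|\Lambda_h u-\Pi_h u\|\to0$ by Proposition~\ref{p:hot} (or directly $\|P^W_h u-u\|+\|u-\Pi_h u\|\to0$), this yields the desired convergence $\G_h(\Lambda_h u)\to\sigma$. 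With this correction the rest of your argument goes through unchanged.
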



We conclude this section with a quasi-orthogonality property.

\begin{proposition}[Quasi-orthogonality]\label{l:qoI}
There exists a constant $\Cqo$ such that
\begin{equation*}
   d(\Lambda_h u, \Lambda_H u)^2
\leq 
d(u, \Lambda_H u)^2 - d( u, \Lambda_h u)^2
+
\Cqo \,\rho(h) 
 (d( u, \Lambda_h u)^2  + d(u, \Lambda_H u)^2 ).
\end{equation*}
\end{proposition}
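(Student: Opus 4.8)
The goal is a Pythagoras-type quasi-orthogonality between the three quantities $d(u,\Lambda_H u)$, $d(u,\Lambda_h u)$ and $d(\Lambda_h u,\Lambda_H u)$, where $d^2(v,w)=\|v-w\|^2+|\G_\bullet(v)-\G_\bullet(w)|_a^2$. The natural strategy is to treat the two ingredients of $d^2$ separately: the energy part $|\cdot|_a^2$ on the vector variable and the $L^2$-part on the scalar variable. For the vector part, the key observation is that $\G_h(\Lambda_h u)$ and $\G_H(\Lambda_H u)$ both satisfy a Galerkin-type equation in $\Sigma_h$ resp.\ $\Sigma_H$, with the coarse space nested in the fine one, $\Sigma_H\subset\Sigma_h$. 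If $\G_h(\Lambda_h u)$ were the exact $a$-projection of $\sigma$ onto $\Sigma_h$, we would get the exact Pythagoras identity $|\sigma-\G_H(\Lambda_H u)|_a^2=|\sigma-\G_h(\Lambda_h u)|_a^2+|\G_h(\Lambda_h u)-\G_H(\Lambda_H u)|_a^2$; the defect from exact orthogonality is governed by the mismatch between the right-hand sides $\lambda P^W_h u$ and $\lambda P^W_H u$ in the discrete problems of Lemma~\ref{l:commuting}, and that mismatch is exactly where the superconvergence estimate (Proposition~\ref{p:hot}) comes in.

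Concretely, I would start by writing, for the cross term, $a(\sigma-\G_h(\Lambda_h u),\G_h(\Lambda_h u)-\G_H(\Lambda_H u))$ and expanding via the defining relations \eqref{eq:G}, \eqref{eq:Gh} and the second equations of the continuous/discrete eigenvalue problems; as in the proof of Lemma~\ref{l:sigmanorm} this produces a term of the form $\lambda(u-P^W_h u,\cdot)$ (and a $P^W_H$ analogue on the coarse level), which by $\|u-P^W_h u\|\le\|u-\Lambda_h u\|$ together with Proposition~\ref{p:hot} and \eqref{eq:stimaHelm}-type bounds is controlled by $\rho(h)\,\big(d(u,\Lambda_h u)+d(u,\Lambda_H u)\big)$ times $|\G_h(\Lambda_h u)-\G_H(\Lambda_H u)|_a$. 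Absorbing one factor and using Young's inequality then converts the almost-orthogonality $|\sigma-\G_H(\Lambda_H u)|_a^2\approx|\sigma-\G_h(\Lambda_h u)|_a^2+|\G_h(\Lambda_h u)-\G_H(\Lambda_H u)|_a^2$ into the claimed inequality with a $\Cqo\rho(h)$ perturbation. For the scalar part one argues analogously: expand $(\Lambda_h u-\Lambda_H u,\Lambda_h u-\Lambda_H u)$, or rather the difference $\|u-\Lambda_H u\|^2-\|u-\Lambda_h u\|^2-\|\Lambda_h u-\Lambda_H u\|^2 = 2(u-\Lambda_h u,\Lambda_h u-\Lambda_H u)$, and use that $\Lambda_h u-\Lambda_H u\in W_h$ (since $W_H\subset W_h$ for nested meshes) is $\H$-orthogonal to $u-P^W_h u$, leaving $2(P^W_h u-\Lambda_h u,\Lambda_h u-\Lambda_H u)$, which is again a superconvergent quantity bounded by $\rho(h)$ times the $d$-quantities by Proposition~\ref{p:hot} and $\|P^W_h u-\Pi_h u\|\le\|\Lambda_h u-\Pi_h u\|$.

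The main obstacle, and the point requiring care, is the bookkeeping of \emph{which} projection ($P^W_h$ vs.\ $P^W_H$, $\Pi_h$ vs.\ $\Pi_H$) appears where, and ensuring that every leftover cross term really is of the superconvergent type $\|\,\cdot\,\text{-}\,\Lambda_h u\|$ or $\|\,\cdot\,\text{-}\,\Lambda_H u\|$ controlled by $\rho(h)$, rather than a full-order energy term that cannot be absorbed. In particular one must verify that the coarse-level remainder $(\Pi_H u-P^W_H u,\cdot)$ can be bounded by $\rho(h)$ (not just $\rho(H)$); this works because $h\le H$ along a refinement, so $\rho(h)\le\rho(H)$ is not what is needed — rather one keeps $\rho(H)$-type bounds for coarse quantities and notes $\rho(H)\le\rho(h_0)$, while the genuinely fine contributions carry $\rho(h)$; the statement as written uses $\rho(h)$ as the single perturbation parameter, so one simply replaces $\rho(H)$ by the larger $\rho(h_0)\ge\rho(H)\ge\rho(h)$ where necessary, or notes that only the fine-level superconvergence is invoked after the overlay is built. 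Finally, the $H$ (mesh-size) factors appearing in estimates of the form $H\|\Lambda_h u-\Lambda_H u\|$ (as in the proof of Proposition~\ref{p:drel}) must be folded into the $\rho(h)$ term, which is legitimate since $H=h_T$ on the coarse patch is dominated by $\rho(h_0)$.
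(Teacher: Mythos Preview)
Your overall architecture---split $d^2$ into its $|\cdot|_a$ and $\|\cdot\|$ parts, write the obvious Pythagoras identities, and show the cross terms are superconvergent via Proposition~\ref{p:hot}---is exactly what the paper does. But the execution contains a genuine error.

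For the scalar cross term you write $2(u-\Lambda_h u,\Lambda_h u-\Lambda_H u)$ and then claim that $\Lambda_h u-\Lambda_H u\in W_h$ ``since $W_H\subset W_h$ for nested meshes''. This inclusion is \emph{false}: $W_H$ and $W_h$ are spans of discrete eigenfunctions, not finite element spaces, and there is no reason for the coarse eigenfunctions $u_{H,j}$ to lie in $W_h$. What \emph{is} true is $M_H\subset M_h$, so $\Lambda_h u-\Lambda_H u\in M_h$ and the correct reduction replaces $u$ by $\Pi_h u$, giving
\[
(u-\Lambda_h u,\Lambda_h u-\Lambda_H u)=(\Pi_h u-\Lambda_h u,\Lambda_h u-\Lambda_H u).
\]
Now $\|\Pi_h u-\Lambda_h u\|\lesssim\rho(h)\,d(u,\Lambda_h u)$ by Proposition~\ref{p:hot} together with Lemma~\ref{l:sigmanorm}, and Young's inequality finishes this part. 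This is precisely the paper's route; your $P^W_h$-based argument does not get off the ground.

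Your treatment of the vector cross term is also imprecise. Using $a(\sigma,\tau)=-b(\tau,u)$, the definition of $\G_h$, and $\ddiv\Sigma_H\subset M_H\subset M_h$, one computes
\[
a(\sigma-\G_h(\Lambda_h u),\G_h(\Lambda_h u)-\G_H(\Lambda_H u))
 = \lambda\,(P^W_h u-P^W_H u,\ \Pi_h u-\Lambda_h u),
\]
so the cross term is a product of two scalar quantities, one of which is the superconvergent $\|\Pi_h u-\Lambda_h u\|$ and the other is bounded by $\|u-\Lambda_h u\|+\|u-\Lambda_H u\|$. There is no factor $|\G_h(\Lambda_h u)-\G_H(\Lambda_H u)|_a$ to absorb, and no $(u-P^W_h u,\cdot)$ term appears. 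Consequently your closing discussion about juggling $\rho(h)$ versus $\rho(H)$ and absorbing $H\|\Lambda_h u-\Lambda_H u\|$ terms is unnecessary: the only place $\rho$ enters is through the fine-level superconvergence $\|\Pi_h u-\Lambda_h u\|\lesssim\rho(h)\,(\cdots)$, which already carries the correct parameter.
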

\begin{proof}
The proof departs with the following obvious algebraic identities
\begin{equation*}
\begin{aligned}
 |\G_h(\Lambda_h u)-\G_H(\Lambda_H u)|_a^2
 &
 =
 |\sigma -\G_H(\Lambda_H u)|_a^2
 - |\sigma -\G_h(\Lambda_h u)|_a^2
\\
&\qquad
 - 2 a(\sigma - \G_h(\Lambda_h u),\G_h(\Lambda_h u) -\G_H(\Lambda_H u)) 
\end{aligned}
\end{equation*}
\begin{equation*}
 \|\Lambda_h u-\Lambda_H u\|^2
 =
 \|u -\Lambda_H u\|^2
 - \|u - \Lambda_h u \|^2
 - 2 (\Pi_h u - \Lambda_h u,\Lambda_h u -\Lambda_H u) .
\end{equation*}
The exact and discrete eigenvalue problems together with
the inclusion $\ddiv \Sigma_H \subseteq M_H$ imply
\begin{equation*}
\begin{aligned}
a(\sigma - \G_h (\Lambda_h u),\G_h (\Lambda_h u) - \G_H (\Lambda_H u)) 
&
=
-b(\G_h (\Lambda_h u) - \G_H (\Lambda_H u), u-\Lambda_h u)
\\
&
= \lambda (P^W_h u-P^W_H u, \Pi_h u-\Lambda_h u).
\end{aligned}
\end{equation*}

Therefore it follows from Proposition~\ref{p:hot}, Lemma~\ref{l:sigmanorm},
and the Young inequality that
\begin{equation*}
\begin{aligned}
&
\lvert a(\sigma - \G_h (\Lambda_h u),\G_h (\Lambda_h u) - \G_H (\Lambda_H u)) \rvert
+
\lvert (\Pi_h u - \Lambda_h u,\Lambda_h u - \Lambda_H u) \rvert
\\
&
\qquad
\leq 
\|\Pi_h u - \Lambda_h u \| \, 
  ( \|\Lambda_h u - \Lambda_H u\| + \lambda \|P^W_h u-P^W_H u\|)
\\
&
\qquad
\lesssim \rho(h) (d( u, \Lambda_h u)^2  + d(u, \Lambda_H u)^2 ).
\end{aligned}
\end{equation*}
\end{proof}

\subsection{Contraction property}

While the properties of the previous subsection are valid for any refinement
$\T_h$ of a mesh $\T_H$, in this section we deal with the mesh sequence
$\T_\ell$ which is the output of the adaptive strategy described in
Section~\ref{se:algorithm}.

The following property is quite standard and can be proved with the techniques
of~\cite{CasconKreuzerNochettoSiebert2008}.

\begin{lemma}[Error estimator reduction property]\label{l:reduction}
  Provided the initial mesh-size is sufficiently small such that
  the bulk criteria for $\mu_\ell$ and $\eta_\ell$ are equivalent (see
Lemma~\ref{l:estcomparison}),
  there exist constants $\rho_1\in(0,1)$ and $K\in (0,+\infty)$ such that
  $\T_\ell$ and its one-level refinement $\T_{\ell+1}$
  generated by AFEM satisfy
 \begin{equation*}
  \sum_{j\in J}\mu_{\ell+1}(u_j,\T_{\ell+1})^2
  \leq
  \rho_1 \sum_{j\in J}\mu_{\ell}(u_j,\T_\ell)^2
 +
 K_1 \sum_{j\in J} d(\Lambda_{\ell+1} u_j,\Lambda_{\ell} u_j)^2 .
 \end{equation*}
\end{lemma}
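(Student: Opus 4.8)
The plan is to mimic the classical error estimator reduction argument of Cascón--Kreuzer--Nochetto--Siebert, adapted to the mixed setting where the role of the discrete eigenfunction is played by $\Lambda_\ell u_j$. First I would record the two structural facts that make the argument work: the mesh-size decrease on refined elements (if $T\in\T_\ell\cap\T_{\ell+1}$ then $h_T$ is unchanged, while every $T\in\T_\ell\setminus\T_{\ell+1}$ is split so that the children have diameter at most $2^{-1/d}h_T$, and similarly for edges), and the fact that the local seminorm $|\cdot|_{\eta,T}$ is built from mesh-weighted $L^2$ norms of $\G_{\ell+1}(\Lambda_{\ell+1}u_j)-\nabla\Lambda_{\ell+1}u_j$, $\curl\G_{\ell+1}(\Lambda_{\ell+1}u_j)$ and the tangential jumps, each of which is a (semi)norm in an appropriate Lebesgue space. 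The key algebraic step is the pointwise/elementwise triangle inequality
\[
|\Lambda_{\ell+1}u_j|_{\eta,T}
\leq
|\Lambda_{\ell}u_j|_{\eta,T}^{(\ell+1)}
+
|\Lambda_{\ell+1}u_j-\Lambda_{\ell}u_j|_{\eta,T},
\]
where $|\Lambda_{\ell}u_j|_{\eta,T}^{(\ell+1)}$ denotes the seminorm evaluated on the finer mesh but applied to the coarse object $\Lambda_\ell u_j$ (which lives in $M_\ell\subseteq M_{\ell+1}$, and correspondingly $\G_\ell(\Lambda_\ell u_j)\in\Sigma_\ell\subseteq\Sigma_{\ell+1}$, so $\G_{\ell+1}(\Lambda_\ell u_j)=\G_\ell(\Lambda_\ell u_j)$ — this identity of the solution operators on the coarse space is the point that needs a brief verification). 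Squaring and using Young's inequality with a parameter $\delta>0$ gives
\[
|\Lambda_{\ell+1}u_j|_{\eta,T}^2
\leq
(1+\delta)\,|\Lambda_{\ell}u_j|_{\eta,T}^{(\ell+1)\,2}
+
(1+\delta^{-1})\,|\Lambda_{\ell+1}u_j-\Lambda_{\ell}u_j|_{\eta,T}^2.
\]

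Next I would sum over $T\in\T_{\ell+1}$ and over $j\in J$. For the first term on the right, I split the sum into elements in $\T_\ell\cap\T_{\ell+1}$, where the mesh-size factors are unchanged and the contribution is exactly $|\Lambda_\ell u_j|_{\eta,T}^2$, and elements obtained by refining a marked element, where each mesh-size weight gains a factor $q:=2^{-1/d}<1$ (in 2D, $q=2^{-1/2}$; boundary/interior edge weights scale the same way since refined edges halve). This yields
\[
\sum_{j\in J}|\Lambda_\ell u_j|_{\eta,\T_{\ell+1}}^{(\ell+1)\,2}
\leq
\sum_{j\in J}\eta_{\ell}(u;\T_\ell)^2
-
(1-q^2)\sum_{j\in J}\mu_\ell(u_j,\mathcal M_\ell)^2,
\]
after rewriting $|\Lambda_\ell u_j|_{\eta,\cdot}=\mu_\ell(u_j,\cdot)$. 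Now I invoke the Dörfler marking together with the equivalence of the bulk criteria from Lemma~\ref{l:estcomparison}: since $\mathcal M_\ell$ satisfies $\theta\sum_j\eta_{\ell,j}(\T_\ell)^2\leq\sum_j\eta_{\ell,j}(\mathcal M_\ell)^2$, and since $\sum_j\mu_\ell(u_j,\cdot)^2$ and $\sum_j\eta_{\ell,j}(\cdot)^2$ are locally equivalent up to the factors $\N^{-1}$ and $(\B/\A)^2(2\N+4\N^2)$, the term $\sum_j\mu_\ell(u_j,\mathcal M_\ell)^2$ is bounded below by a fixed multiple $c\,\theta$ of $\sum_j\mu_\ell(u_j,\T_\ell)^2$. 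Choosing $\delta$ small enough that $(1+\delta)(1-c\theta(1-q^2))<1$ produces a contraction factor $\rho_1\in(0,1)$ for the leading term.

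Finally, for the second term I need $\sum_{j\in J}|\Lambda_{\ell+1}u_j-\Lambda_\ell u_j|_{\eta,\T_{\ell+1}}^2\lesssim\sum_{j\in J}d(\Lambda_{\ell+1}u_j,\Lambda_\ell u_j)^2$; this is an ``efficiency-type'' bound for the difference of two discrete objects rather than for the continuous error, and it follows by the same local estimates as in the efficiency proof (Proposition~\ref{l:efficiency}), or more directly: each term $\|h_T(\G_{\ell+1}(w)-\nabla w)\|_T$ with $w=\Lambda_{\ell+1}u_j-\Lambda_\ell u_j$ is bounded by $C(\|w\|_T+|\G_{\ell+1}(w)|_{a,T})$ using an inverse estimate for the piecewise-polynomial $\nabla w$ and the mesh-size weight, while the $\curl$ and jump terms are bounded via a local Helmholtz argument exactly as in \cite{Alonso1996}; summing gives $\lesssim\|w\|^2+|\G_{\ell+1}(w)|_a^2=d(\Lambda_{\ell+1}u_j,\Lambda_\ell u_j)^2$. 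Collecting everything yields the claim with $K_1=(1+\delta^{-1})$ times that efficiency constant. The step I expect to require the most care is the bookkeeping in the mesh-size reduction sum combined with the constant-tracking through Lemma~\ref{l:estcomparison}: one must make sure the $\N$-dependent equivalence constants enter only in the (harmless, $\theta$-independent) lower bound $c\theta$ and do not spoil the strict inequality $\rho_1<1$; since $\theta$ is at our disposal this is fine, but the order of quantifiers (fix $\theta$ small, then $\delta$ small) has to be respected.
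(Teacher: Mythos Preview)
Your overall strategy is exactly the one the paper has in mind; the paper itself does not spell out a proof but simply remarks that the result ``is quite standard and can be proved with the techniques of~\cite{CasconKreuzerNochettoSiebert2008}.'' So you are on the right track, including the use of Lemma~\ref{l:estcomparison} to transfer the D\"orfler bulk criterion from $\eta$ to $\mu$.

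There is, however, one genuine gap. The identity you flag for ``brief verification'', namely $\G_{\ell+1}(\Lambda_\ell u_j)=\G_\ell(\Lambda_\ell u_j)$, is \emph{false}. By definition, $\G_h(w)$ is the $L^2$-Riesz representer in $\Sigma_h$ of the functional $\tau\mapsto -(\ddiv\tau,w)$; equivalently, $\G_h(w)$ is the unique element of $\Sigma_h$ that is $L^2$-orthogonal to the discrete divergence-free kernel $\kernel$ and has prescribed divergence. Since $\mathbb{K}_\ell\subsetneq\mathbb{K}_{\ell+1}$ (the finer mesh admits more curl test functions), there is no reason for $\G_\ell(\Lambda_\ell u_j)$ to be orthogonal to the additional elements of $\mathbb{K}_{\ell+1}$, and in general it is not. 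Consequently your triangle-inequality split, which uses $\G_{\ell+1}(\Lambda_\ell u_j)$ in \emph{both} terms on the right, cannot be matched either to $\mu_\ell(u_j;\cdot)$ (which uses $\G_\ell$) or to $d(\Lambda_{\ell+1}u_j,\Lambda_\ell u_j)$ (whose second summand is $|\G_{\ell+1}(\Lambda_{\ell+1}u_j)-\G_\ell(\Lambda_\ell u_j)|_a$, cf.\ Definition~\ref{def:distance}).

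The fix is minor: treat the local seminorm as a function of the \emph{pair} $(\ssigma,v)\in\Sigma_{\ell+1}\times M_{\ell+1}$,
\[
|(\ssigma,v)|_{\eta,T}^2
=\|h_T(\ssigma-\nabla v)\|_T^2+\|h_T\curl\ssigma\|_T^2
+\sum_{E\in\mathcal E(T)}h_E\|[\ssigma]_E\cdot t_E\|_E^2,
\]
and apply the triangle inequality directly to the split
\[
(\G_{\ell+1}(\Lambda_{\ell+1}u_j),\Lambda_{\ell+1}u_j)
=(\G_\ell(\Lambda_\ell u_j),\Lambda_\ell u_j)
+\big(\G_{\ell+1}(\Lambda_{\ell+1}u_j)-\G_\ell(\Lambda_\ell u_j),\,
\Lambda_{\ell+1}u_j-\Lambda_\ell u_j\big).
\]
Since $(\G_\ell(\Lambda_\ell u_j),\Lambda_\ell u_j)\in\Sigma_\ell\times M_\ell\subseteq\Sigma_{\ell+1}\times M_{\ell+1}$, the first term is precisely the coarse contribution $\mu_\ell(u_j;\cdot)$ evaluated with the fine mesh-size weights, so your mesh-size reduction and D\"orfler arguments go through unchanged. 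The second term is a pair of piecewise polynomials of bounded degree on $\T_{\ell+1}$, and inverse/trace inequalities give
\[
\sum_{T\in\T_{\ell+1}}|(\cdot,\cdot)|_{\eta,T}^2
\lesssim
|\G_{\ell+1}(\Lambda_{\ell+1}u_j)-\G_\ell(\Lambda_\ell u_j)|_a^2
+\|\Lambda_{\ell+1}u_j-\Lambda_\ell u_j\|^2
= d(\Lambda_{\ell+1}u_j,\Lambda_\ell u_j)^2,
\]
which is exactly the bound you need. With this adjustment the rest of your plan (Young's inequality, choice of $\delta$, and the quantifier order for $\theta$) is correct.
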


The following proposition presents the main contraction property which is
essential for the convergence proof of the adaptive strategy.

\begin{proposition}[Contraction property]\label{p:contraction}
 Provided the initial mesh-size is sufficiently small,
 there exist $\rho_2\in(0,1)$ and $\beta\in(0,+\infty)$ such that
 the term
\begin{equation}\label{e:xielldef}
  \xi_\ell^2=  \sum_{j\in J}\mu_\ell(u_j,\T_\ell)^2
                    + \beta \sum_{j\in J} d(u_j,\Lambda_\ell u_j)^2
\end{equation}
 satisfies
 \begin{equation*}
   \xi_{\ell+1}^2 \leq \rho_2 \xi_{\ell} ^2
        \quad \text{for all } \ell\in\mathbb{N}.
 \end{equation*}
\end{proposition}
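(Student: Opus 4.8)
The plan is to combine the error estimator reduction property (Lemma~\ref{l:reduction}) with the quasi-orthogonality property (Proposition~\ref{l:qoI}), applied to $\T_h=\T_{\ell+1}$ and $\T_H=\T_\ell$, in the now-classical telescoping fashion of~\cite{CasconKreuzerNochettoSiebert2008}. First I would write $\xi_{\ell+1}^2 = \sum_{j\in J}\mu_{\ell+1}(u_j,\T_{\ell+1})^2 + \beta\sum_{j\in J}d(u_j,\Lambda_{\ell+1}u_j)^2$ and bound the estimator term from above using Lemma~\ref{l:reduction}, which produces $\rho_1\sum_{j}\mu_\ell(u_j,\T_\ell)^2$ plus a multiple of $\sum_j d(\Lambda_{\ell+1}u_j,\Lambda_\ell u_j)^2$. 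The cross term $d(\Lambda_{\ell+1}u_j,\Lambda_\ell u_j)^2$ is exactly what quasi-orthogonality controls: summing the inequality of Proposition~\ref{l:qoI} over $j\in J$ gives
\begin{equation*}
\sum_{j\in J} d(\Lambda_{\ell+1}u_j,\Lambda_\ell u_j)^2
\leq
(1+\Cqo\rho(h_0))\sum_{j\in J} d(u_j,\Lambda_\ell u_j)^2
-(1-\Cqo\rho(h_0))\sum_{j\in J} d(u_j,\Lambda_{\ell+1}u_j)^2 .
\end{equation*}

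Next I would substitute this into the bound for $\xi_{\ell+1}^2$. Collecting terms, the coefficient multiplying $\sum_j d(u_j,\Lambda_{\ell+1}u_j)^2$ becomes $\beta - K_1(1-\Cqo\rho(h_0))$, and by choosing $\beta$ small enough (say $\beta \le K_1/2$ once $\Cqo\rho(h_0)\le 1/2$, so the combined coefficient is $\le \beta$, indeed one wants it bounded by $\beta$ times a factor $<1$ so that it folds back into $\xi_{\ell+1}^2$; more precisely one arranges $\beta \le K_1(1-\Cqo\rho(h_0))$), while the coefficient of $\sum_j d(u_j,\Lambda_\ell u_j)^2$ is at most $K_1(1+\Cqo\rho(h_0))$, which we must compare against $\beta$ appearing in $\xi_\ell^2$. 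The standard trick is to first invoke the bulk/D\"orfler marking together with the reliability (Corollary~\ref{c:reliability}) to absorb a small multiple of $\sum_j d(u_j,\Lambda_\ell u_j)^2$ into the estimator term $\sum_j\mu_\ell(u_j,\T_\ell)^2$: since $\sum_j d(u_j,\Lambda_\ell u_j)^2 \le \Crel^2\sum_j\mu_\ell(u_j,\T_\ell)^2$, a fraction of the $d$-term can be traded for a fraction of the $\mu$-term, at the cost of slightly enlarging the reduction factor $\rho_1$ to some $\rho_1'<1$ (this is where the initial mesh-size must be small so that $\rho(h_0)$ is small and the enlargement is harmless). After this absorption one ends up with
\begin{equation*}
\xi_{\ell+1}^2 \le \rho_1' \sum_{j\in J}\mu_\ell(u_j,\T_\ell)^2 + \beta' \sum_{j\in J} d(u_j,\Lambda_\ell u_j)^2
\end{equation*}
with $\rho_1'<1$ and $\beta' < \beta$, hence $\xi_{\ell+1}^2 \le \max\{\rho_1',\beta'/\beta\}\,\xi_\ell^2 =: \rho_2\,\xi_\ell^2$ with $\rho_2<1$.

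The bookkeeping of the constants is the main technical obstacle: one has to choose $\beta$ and exploit the smallness of $\rho(h_0)$ simultaneously so that (i) the newly generated $d(u_j,\Lambda_{\ell+1}u_j)^2$ terms coming through quasi-orthogonality never exceed the $\beta$-weight already allotted to them in $\xi_{\ell+1}^2$, (ii) the absorption of the level-$\ell$ $d$-term into the level-$\ell$ estimator term via reliability keeps the estimator reduction factor strictly below one, and (iii) the residual $\Cqo\rho(h_0)$-contributions are dominated by the gap $1-\rho_1$. Each of these is a routine inequality chase, but getting a single consistent choice of $\beta$, of the smallness threshold on the initial mesh-size, and of the D\"orfler parameter $\theta$ (already fixed in the proof of Theorem~\ref{th:main}) requires care; this is precisely the step I expect to consume most of the work, and it follows the template of~\cite[Lemma~4.6]{CasconKreuzerNochettoSiebert2008} and its adaptation in~\cite{Gallistl2014conform}.
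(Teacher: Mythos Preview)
Your proposal is correct and follows essentially the same approach as the paper: combine the estimator reduction (Lemma~\ref{l:reduction}) with the quasi-orthogonality (Proposition~\ref{l:qoI}), then use reliability (Corollary~\ref{c:reliability}) to trade a small multiple $\varepsilon$ of the level-$\ell$ error term for an increment in the estimator factor, and finally choose $\beta$ and $\varepsilon$ so that both resulting factors are strictly below one. The paper makes the specific choice $\beta = K_1(1-\Cqo\rho(h_0))$ so that the $e_{\ell+1}^2$-coefficient vanishes exactly, and then sets $\rho_2 = \max\{\rho_1+\varepsilon\Crel^2 K_1,\ (1-\varepsilon+\Cqo\rho(h_0))/(1-\Cqo\rho(h_0))\}$; your remark about taking $\beta$ merely $\le K_1(1-\Cqo\rho(h_0))$ also works but is slightly less sharp.
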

\begin{proof}
Throughout the proof, we use the following notation
\[
  e^2_\ell = \sum_{j\in J} d(u_j,\Lambda_\ell u_j)^2\qquad
    \mu_{\ell}^2
        =\sum_{j\in J}\mu_\ell(u_j,\T_\ell)^2.
\]
The error estimator reduction from
Lemma~\ref{l:reduction} and the quasi-orthogonality
from Lemma~\ref{l:qoI} imply the following bound
\begin{equation*}
 \mu_{\ell+1}^2
  + K_1
     (1-\Cqo\rho(h_0))e_{\ell+1}^2
  \leq
  \rho_1 \mu_{\ell}^2
  +
  K_1 (1+\Cqo\rho(h_0))e_\ell^2 .
\end{equation*}
For any $\varepsilon\in (0,1)$, the last bound and
the reliability (Corollary~\ref{c:reliability}) give
 \begin{equation*}
\aligned
&  \mu_{\ell+1}^2
  + K_1
     (1-\Cqo\rho(h_0))e_{\ell+1}^2\\
&\qquad  \leq
  (\rho_1 + \varepsilon \Crel^2 K_1) \mu_{\ell}^2
  +
  K_1(1-\varepsilon+\Cqo\rho(h_0))e_\ell^2.
\endaligned
 \end{equation*}
 We take
 $\beta = K_1  (1-\Cqo\rho(h_0))$
 and
\begin{equation*}
\rho_2 = \max\left\{
 \rho_1 + \varepsilon \Crel^2 K_1 ,
 \frac{1-\varepsilon+\Cqo\rho(h_0)}{1-\Cqo\rho(h_0)} 
\right\},
\end{equation*}
so that
\begin{equation*}
\mu_{\ell+1}^2 + \beta e_{\ell+1}^2
\leq 
\rho_2 (\mu_{\ell}^2+ \beta e_\ell^2).
\end{equation*}
The choice of a sufficiently small $\varepsilon$ and
of a sufficiently small initial mesh-size $h_0$
leads to $\rho_2 < 1$.
\end{proof}

\section{Extension to three space dimensions}
\label{se:3D}

The results presented in the previous sections hold true also in three dimensions, 
provided the definitions of the computable and theoretical error indicators are 
modified as follows.

\begin{definition}
Let $\T_h$ be a simplicial decomposition of $\Omega$ and let
$(\sigma_{h,j},u_{h,j})\in\Sigma_h\times M_h$ be a discrete eigensolution
computed on the mesh $\T_h$.
Then, for all $T\in\T_h$ we define
\begin{equation*}
\eta_{h,j}(T)^2
=
   \| h_T (\sigma_{h,j} - \nabla u_{h,j}) \|_T^2 
   + \| h_T \ccurl \sigma_{h,j} \|_T^2 
   + \sum_{F\in\mathcal F(T)} h_F \| [\sigma_{h,j}]_F \times n_F\|_F^2,
\end{equation*}
where $h_T$ is the diameter of $T$, $\mathcal{F}(T)$ denotes the set of faces
of $T$, $h_F$ is the area of the face $F$, and $n_F$ is its unit normal 
vector. As usual, $[\sigma_h]_F\times n_F$ denotes the jump of the trace of
$\sigma_h\times n_F$ for internal faces and the trace for boundary faces.
\label{def:indicator3D}
\end{definition} 

\begin{definition}
Let $\T_h\in\mathbb{T}$ be a triangulation and let $(\sigma,u)$ 
be an eigensolution associated to the eigenvalue $\lambda$ (in
particular, this is used in the definition of $\Lambda_h$).
For all $T\in\T_h$ we define
\begin{equation*}
\aligned
\mu_h^2(u;T) 
={}&
   \| h_T (\G(\Lambda_h u) - \nabla \Lambda_h u) \|_T^2 
   + \| h_T \ccurl \G(\Lambda_h u) \|_T^2 \\
&   + \sum_{F\in\mathcal F(T)} h_F \| [\G(\Lambda_h u)]_F \times n_F\|_F^2 .
\endaligned
\end{equation*}
\end{definition}

In the three-dimensional case, the only proof which needs to be modified is the one of the discrete 
reliability of Proposition~\ref{p:drel} since it relies on the discrete Helmholtz decomposition which is 
different in two or three dimensions. 

Let $V_h$ denote the $H(\ccurl)$-conforming edge elements of  N\'ed\'elec 
(see~\cite{BoffiBrezziFortin2013}). 

Then, in the three dimensional case, the discrete Helmholtz decomposition reads (see~\cite{HuangXu2012}, Lemma 2.6): 
for any $\xi_h\in\Sigma_h$ there exist $\alpha_h\in M_h$ and $\mathbf{\beta}_h\in V_h$ such that 
$$
\xi_h=\G_h(\alpha_h)+\ccurl\beta_h.
$$

\begin{proposition}[Discrete reliability]

Provided the mesh-size of $\T_H$ is sufficiently small, we have
\begin{equation*}
\aligned
&|\G_h(\Lambda_h u)-\G_H(\Lambda_H u)|_a +
\| \Lambda_h (u) - \Lambda_H (u)\|\\
&\qquad\le\Cdrel
\mu_H (u;\tilde{\mathcal{R}})+C
\rho(H) (d(u,\Lambda_h u) + d(u,\Lambda_H u) ),
\endaligned
\end{equation*}
where $\tilde{\mathcal{R}}=\{T\in\T_H:\bar{T}\cap\bar{T'}\neq\emptyset\ \ \forall T'\in(\T_H\setminus\T_h)\}.$
\end{proposition}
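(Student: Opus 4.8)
The plan is to mimic the two-dimensional proof of Proposition~\ref{p:drel}, replacing the two-dimensional discrete Helmholtz decomposition and the scalar-curl manipulations by their three-dimensional analogues. First I would apply the three-dimensional discrete Helmholtz decomposition quoted above to $\zeta_h=\G_h(\Lambda_h u)-\G_H(\Lambda_H u)\in\Sigma_h$, obtaining $\alpha_h\in M_h$ and $\beta_h\in V_h$ with $\zeta_h=\G_h(\alpha_h)+\ccurl\beta_h$. As in 2D, since $\ddiv(\G_h(\alpha_h)-\zeta_h)=0$ by \eqref{eq:sparita}, the two summands are $a$-orthogonal, and the inf-sup argument of \eqref{eq:stimaHelm} still yields $\|\alpha_h\|\lesssim|\G_h(\alpha_h)|_a$ verbatim (this part does not see the space dimension). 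The only genuinely new ingredient is the bound for the $\ccurl\beta_h$-term.

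For that term I would argue as in the 2D case but with a Scott--Zhang type (or Nédélec) quasi-interpolation $\overline{\beta}_H$ of $\beta_h$ onto the coarse edge element space $V_H$, using that $\overline{\beta}_H=\beta_h$ (hence $\ccurl(\beta_h-\overline{\beta}_H)=0$) on the unrefined region. Writing
\[
|\ccurl\beta_h|_a^2 = a(\G_h(\Lambda_h u)-\G_H(\Lambda_H u),\ccurl\beta_h)
= -a(\G_H(\Lambda_H u),\ccurl(\beta_h-\overline{\beta}_H)),
\]
an element-wise integration by parts produces volume terms with $\ccurl\G_H(\Lambda_H u)$ and face terms with the tangential jumps $[\G_H(\Lambda_H u)]_F\times n_F$, both supported on the patch $\tilde{\mathcal{R}}$ of coarse elements touching $\T_H\setminus\T_h$. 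Combined with the $H^1$-stability and first-order approximation of the interpolant and a discrete trace inequality, this gives $|\ccurl\beta_h|_a\lesssim\mu_H(u;\tilde{\mathcal{R}})$, which is exactly why the statement localizes to $\tilde{\mathcal{R}}$ rather than just $\T_H\setminus\T_h$.

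The remaining steps are identical to 2D and I would simply invoke them: the estimate for $|\G_h(\alpha_h)|_a^2$ via the eigenvalue problems gives $\lambda(P^W_hu-P^W_Hu,\alpha_h)$, split into three pieces controlled by $\|P^W_hu-\Pi_hu\|\le\|\Lambda_hu-\Pi_hu\|$, Proposition~\ref{p:hot}, Lemma~\ref{l:sigmanorm}, the bound \eqref{eq:stimaHelm}, and the estimate $\|\alpha_h-\Pi_H\alpha_h\|\lesssim H|\G_h(\alpha_h)|_a$ (whose 3D counterpart is also in \cite{HuangXu2012}); this yields the vector-variable bound with an extra $H\|\Lambda_hu-\Lambda_Hu\|$ term. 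For the scalar variable I would repeat the duality argument \eqref{e:uguale}: solve $\Delta\hat\phi=\Lambda_hu-\Lambda_Hu$, use a stable (non-orthogonal) regular decomposition $\nabla\hat\phi=z+\ccurl\psi$ with $\ddiv z=\Lambda_hu-\Lambda_Hu$ and $\|z\|+\|\nabla z\|\lesssim\|\Lambda_hu-\Lambda_Hu\|$, apply the Fortin operators $\fortin,\fortinH$ which agree on $\T_H\cap\T_h$, and use that $a(\nabla_H(\Lambda_Hu),(\fortinH-\fortin)z)$ vanishes; this contributes $\mu_H(\Lambda_Hu;\tilde{\mathcal{R}})$ and $|\G_H(\Lambda_Hu)-\G_h(\Lambda_hu)|_a$. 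Absorbing the $H$-weighted terms for $H$ small enough closes the estimate.

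The main obstacle I anticipate is the $\ccurl\beta_h$-term: one must verify that a suitable quasi-interpolation onto Nédélec edge elements exists with the commuting/locality property $\overline{\beta}_H=\beta_h$ away from the refined region together with the requisite $H^1$-stability and approximation estimates on anisotropic refinement patches, and that the integration-by-parts face terms are precisely the tangential-jump contributions appearing in Definition~\ref{def:indicator3D}. Everything else is a transcription of the two-dimensional argument, since the inf-sup/Helmholtz bookkeeping, the superconvergence input (Proposition~\ref{p:hot}), Lemma~\ref{l:sigmanorm}, and the duality step are dimension-agnostic once the correct decomposition is in place.
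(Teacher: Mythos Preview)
Your proposal is correct and follows essentially the same route as the paper: apply the three-dimensional discrete Helmholtz decomposition, treat $|\G_h(\alpha_h)|_a$ and the scalar variable verbatim as in 2D (the paper explicitly says these parts carry over unchanged from Proposition~\ref{p:drel} and~\eqref{e:uguale} onwards), and handle the $\ccurl\beta_h$ term via a coarse N\'ed\'elec quasi-interpolant localized to the refinement region. For that last step the paper is terser than you are---it simply invokes \cite[Lemma~3.1]{HuangXu2012} for the bound $|\ccurl\beta_h|_a\lesssim\mu_H(u;\tilde{\mathcal{R}})$ rather than spelling out the interpolant and the integration-by-parts mechanism---so the ``main obstacle'' you flag is exactly the point the paper outsources to that reference.
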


\begin{proof}
Using the discrete Helmholtz decomposition, we write the error in the vectorial variable as 
\begin{equation*}
\G_h(\Lambda_h u) - \G_H(\Lambda_H u) 
     = \G_h(\alpha_h) + \ccurl \beta_h,
\end{equation*}
with $\alpha_h\in M_h$ and $\beta_h\in V_h$.

The term $|\G_h(\alpha_h)|_a$ can be treated without any modification as in the two dimensional case. 
Moreover, following the same argument as in~\cite[Lemma~3.1.]{HuangXu2012}, it can be proved that 
$$
|\ccurl \beta_h|_a \lesssim \mu_H(u;\tilde{\mathcal{R}}).
$$

As in the proof of Proposition~\ref{p:drel},
the error in the scalar variable can be bounded by using the 
duality argument of \cite{GaticaOyarzuaSayas2011,CarstensenPeterseimSchroeder2012}
and we can repeat the same arguments of the 2D case from Equation~\eqref{e:uguale}
onwards, concluding the proof. 
\end{proof}

\begin{remark}
Compared with the two-dimensional case,
in the three-dimensional version of the discrete reliability,
the set $\T_H\setminus\T_h$ is replaced with the slighliy
larger set $\tilde{\mathcal{R}}$ which essentially is $\T_H\setminus\T_h$
plus one additional layer of simplices.
The shape-regularity implies that there is a constant $C$ such that
\begin{equation*}
\card(\tilde{\mathcal{R}})\leq C \card(\T_H\setminus\T_h) .
\end{equation*}
and therefore the estimate \eqref{e:minimalMl} remains valid at the
expense of the multiplicative constant $C$,
and with this modification the proof of Theorem~\ref{th:main}
applies also to the three-dimensional case.
\end{remark}

\section*{Acknowledgements}
The second named author gratefully acknowledges the hospitality
of the Dipartimento di Matematica ``F. Casorati''
(University of Pavia) during his stay in September 2014.

\bibliographystyle{abbrv}
\bibliography{mixedEVPcluster}

\begin{thebibliography}{10}

\bibitem{Alonso1996}
A.~Alonso.
\newblock Error estimators for a mixed method.
\newblock {\em Numer. Math.}, 74(4):385--395, 1996.

\bibitem{DougActa}
D.~N. Arnold, R.~S. Falk, and R.~Winther.
\newblock Finite element exterior calculus, homological techniques, and
  applications.
\newblock {\em Acta Numer.}, 15:1--155, 2006.

\bibitem{BM}
R.~Becker and S.~Mao.
\newblock An optimally convergent adaptive mixed finite element method.
\newblock {\em Numer. Math.}, 111(1):35--54, 2008.

\bibitem{BeckerMaoShi2010}
R.~Becker, S.~Mao, and Z.~Shi.
\newblock A convergent nonconforming adaptive finite element method with
  quasi-optimal complexity.
\newblock {\em SIAM J. Numer. Anal.}, 47(6):4639--4659, 2010.

\bibitem{BDD}
P.~Binev, W.~Dahmen, and R.~DeVore.
\newblock Adaptive finite element methods with convergence rates.
\newblock {\em Numer. Math.}, 97(2):219--268, 2004.

\bibitem{Boffi2010}
D.~Boffi.
\newblock Finite element approximation of eigenvalue problems.
\newblock {\em Acta Numer.}, 19:1--120, 2010.

\bibitem{BoffiBrezziFortin2013}
D.~Boffi, F.~Brezzi, and M.~Fortin.
\newblock {\em Mixed finite element methods and applications}, volume~44 of
  {\em Springer Series in Computational Mathematics}.
\newblock Springer, Heidelberg, 2013.

\bibitem{bbg2}
D.~Boffi, F.~Brezzi, and L.~Gastaldi.
\newblock On the convergence of eigenvalues for mixed formulations.
\newblock {\em Ann. Scuola Norm. Sup. Pisa Cl. Sci. (4)}, 25(1-2):131--154
  (1998), 1997.
\newblock Dedicated to Ennio De Giorgi.

\bibitem{BraessVerfuerth1996}
D.~Braess and R.~Verf{\"u}rth.
\newblock A posteriori error estimators for the {R}aviart-{T}homas element.
\newblock {\em SIAM J. Numer. Anal.}, 33(6):2431--2444, 1996.

\bibitem{Carstensen1997}
C.~Carstensen.
\newblock A posteriori error estimate for the mixed finite element method.
\newblock {\em Math. Comp.}, 66(218):465--476, 1997.

\bibitem{CarstensenFeischlPagePraetorius2014}
C.~Carstensen, M.~Feischl, M.~Page, and D.~Praetorius.
\newblock Axioms of adaptivity.
\newblock {\em Comput. Math. Appl.}, 67(6):1195--1253, 2014.

\bibitem{CarstensenGallistlSchedensack2015}
C.~Carstensen, D.~Gallistl, and M.~Schedensack.
\newblock Adaptive nonconforming {C}rouzeix-{R}aviart {FEM} for eigenvalue
  problems.
\newblock {\em Math. Comp.}, 84(293):1061--1087, 2015.

\bibitem{CarstensenGedicke2011}
C.~Carstensen and J.~Gedicke.
\newblock An oscillation-free adaptive {FEM} for symmetric eigenvalue problems.
\newblock {\em Numer. Math.}, 118(3):401--427, 2011.

\bibitem{CarstensenGedicke2012}
C.~Carstensen and J.~Gedicke.
\newblock An adaptive finite element eigenvalue solver of quasi-optimal
  computational complexity.
\newblock {\em SIAM J. Numer. Anal.}, 50(3):1029--1057, 2012.

\bibitem{CarstensenPeterseimSchroeder2012}
C.~Carstensen, D.~Peterseim, and A.~Schr\"oder.
\newblock The norm of a discretized gradient in ${H}(div)^*$ for a posteriori
  finite element error analysis.
\newblock {\em Numer. Math.}, 2015.
\newblock To appear.

\bibitem{CarstensenRabus2011}
C.~Carstensen and H.~Rabus.
\newblock An optimal adaptive mixed finite element method.
\newblock {\em Math. Comp.}, 80(274):649--667, 2011.

\bibitem{CasconKreuzerNochettoSiebert2008}
J.~Cascon, C.~Kreuzer, R.~H. Nochetto, and K.~G. Siebert.
\newblock Quasi-optimal convergence rate for an adaptive finite element method.
\newblock {\em SIAM J. Numer. Anal.}, 46(5):2524--2550, 2008.

\bibitem{ChenHolstXu2009}
L.~Chen, M.~Holst, and J.~Xu.
\newblock Convergence and optimality of adaptive mixed finite element methods.
\newblock {\em Math. Comp.}, 78(265):35--53, 2009.

\bibitem{DaiHeZhou2014}
X.~Dai, L.~He, and A.~Zhou.
\newblock Convergence and quasi-optimal complexity of adaptive finite element
  computations for multiple eigenvalues.
\newblock {\em IMA J. Numer. Anal.}, 2014.
\newblock To appear, DOI 10.1093/imanum/dru059.

\bibitem{DaiXuZhou2008}
X.~Dai, J.~Xu, and A.~Zhou.
\newblock Convergence and optimal complexity of adaptive finite element
  eigenvalue computations.
\newblock {\em Numer. Math.}, 110(3):313--355, 2008.

\bibitem{dnr2}
J.~Descloux, N.~Nassif, and J.~Rappaz.
\newblock On spectral approximation. {II}. {E}rror estimates for the {G}alerkin
  method.
\newblock {\em RAIRO Anal. Num\'er.}, 12(2):113--119, iii, 1978.

\bibitem{Do}
W.~D{\"o}rfler.
\newblock A convergent adaptive algorithm for {P}oisson's equation.
\newblock {\em SIAM J. Numer. Anal.}, 33(3):1106--1124, 1996.

\bibitem{DR}
J.~Douglas, Jr. and J.~E. Roberts.
\newblock Mixed finite element methods for second order elliptic problems.
\newblock {\em Mat. Apl. Comput.}, 1(1):91--103, 1982.

\bibitem{DuranGastaldiPadra1999}
R.~G. Dur{\'a}n, L.~Gastaldi, and C.~Padra.
\newblock A posteriori error estimators for mixed approximations of eigenvalue
  problems.
\newblock {\em Math. Models Methods Appl. Sci.}, 9(8):1165--1178, 1999.

\bibitem{Gallistl2014thesis}
D.~Gallistl.
\newblock {\em Adaptive Finite Element Computation of Eigenvalues}.
\newblock Doctoral dissertation, Humboldt-Universit\"at zu Berlin,
  Mathematisch-Naturwissenschaftliche Fakult\"at II, Institut f\"ur Mathematik,
  2014.

\bibitem{Gallistl2014nonconf}
D.~Gallistl.
\newblock Adaptive nonconforming finite element approximation of eigenvalue
  clusters.
\newblock {\em Comput. Methods Appl. Math.}, 14(4):509--535, 2014.

\bibitem{Gallistl2014conform}
D.~Gallistl.
\newblock An optimal adaptive {FEM} for eigenvalue clusters.
\newblock {\em Numer. Math.}, 2014.
\newblock Published online, doi: 10.1007/s00211-014-0671-8.

\bibitem{GarauMorinZuppa2009}
E.~M. Garau, P.~Morin, and C.~Zuppa.
\newblock Convergence of adaptive finite element methods for eigenvalue
  problems.
\newblock {\em Math. Models Methods Appl. Sci.}, 19(5):721--747, 2009.

\bibitem{Gardini2009}
F.~Gardini.
\newblock Mixed approximation of eigenvalue problems: a superconvergence
  result.
\newblock {\em M2AN Math. Model. Numer. Anal.}, 43(5):853--865, 2009.

\bibitem{GaticaMaischak2005}
G.~N. Gatica and M.~Maischak.
\newblock A posteriori error estimates for the mixed finite element method with
  {L}agrange multipliers.
\newblock {\em Numer. Methods Partial Differential Equations}, 21(3):421--450,
  2005.

\bibitem{GaticaOyarzuaSayas2011}
G.~N. Gatica, R.~Oyarz{\'u}a, and F.-J. Sayas.
\newblock A residual-based a posteriori error estimator for a fully-mixed
  formulation of the {S}tokes-{D}arcy coupled problem.
\newblock {\em Comput. Methods Appl. Mech. Engrg.}, 200(21-22):1877--1891,
  2011.

\bibitem{GianiGraham2009}
S.~Giani and I.~G. Graham.
\newblock A convergent adaptive method for elliptic eigenvalue problems.
\newblock {\em SIAM J. Numer. Anal.}, 47(2):1067--1091, 2009.

\bibitem{GR}
V.~Girault and P.-A. Raviart.
\newblock {\em Finite element methods for {N}avier-{S}tokes equations},
  volume~5 of {\em Springer Series in Computational Mathematics}.
\newblock Springer-Verlag, Berlin, 1986.
\newblock Theory and algorithms.

\bibitem{HuangXu2012}
J.~Huang and Y.~Xu.
\newblock Convergence and complexity of arbitrary order adaptive mixed element
  methods for the {P}oisson equation.
\newblock {\em Sci. China Math.}, 55(5):1083--1098, 2012.

\bibitem{LarsonMalqvist2008}
M.~G. Larson and A.~M{\aa}lqvist.
\newblock A posteriori error estimates for mixed finite element approximations
  of elliptic problems.
\newblock {\em Numer. Math.}, 108(3):487--500, 2008.

\bibitem{LovadinaStenberg2006}
C.~Lovadina and R.~Stenberg.
\newblock Energy norm a posteriori error estimates for mixed finite element
  methods.
\newblock {\em Math. Comp.}, 75(256):1659--1674 (electronic), 2006.

\bibitem{NochettoSiebertVeeser2009}
R.~H. Nochetto, K.~G. Siebert, and A.~Veeser.
\newblock Theory of adaptive finite element methods: an introduction.
\newblock In {\em Multiscale, nonlinear and adaptive approximation}, pages
  409--542. Springer, Berlin, 2009.

\bibitem{NochettoVeeser}
R.~H. Nochetto and A.~Veeser.
\newblock Primer of adaptive finite element methods.
\newblock In {\em Multiscale and adaptivity: modeling, numerics and
  applications}, volume 2040 of {\em Lecture Notes in Math.}, pages 125--225.
  Springer, Heidelberg, 2012.

\bibitem{pasciak}
J.~E. Pasciak and J.~Zhao.
\newblock Overlapping {S}chwarz methods in {$H$}(curl) on polyhedral domains.
\newblock {\em J. Numer. Math.}, 10(3):221--234, 2002.

\bibitem{ScottZhang1990}
L.~R. Scott and S.~Zhang.
\newblock Finite element interpolation of nonsmooth functions satisfying
  boundary conditions.
\newblock {\em Math. Comp.}, 54(190):483--493, 1990.

\bibitem{Stevenson2007}
R.~Stevenson.
\newblock Optimality of a standard adaptive finite element method.
\newblock {\em Foundations of Computational Mathematics}, 7(2):245--269, 2007.

\bibitem{stevenson2008}
R.~Stevenson.
\newblock The completion of locally refined simplicial partitions created by
  bisection.
\newblock {\em Math. Comp.}, 77(261):227--241, 2008.

\bibitem{StrangFix1973}
G.~Strang and G.~J. Fix.
\newblock {\em An analysis of the finite element method}.
\newblock Prentice-Hall Series in Automatic Computation. Prentice-Hall, Inc.,
  Englewood Cliffs, N. J., 1973.

\bibitem{WohlmuthHoppe1999}
B.~I. Wohlmuth and R.~H.~W. Hoppe.
\newblock A comparison of a posteriori error estimators for mixed finite
  element discretizations by {R}aviart-{T}homas elements.
\newblock {\em Math. Comp.}, 68(228):1347--1378, 1999.

\end{thebibliography}

\end{document}